\numberwithin{equation}{section}
\newtheorem{theorem}{Theorem}[section]
\newtheorem{lemma}[theorem]{Lemma}
\newtheorem{cor}[theorem]{Corollary}
\newtheorem{conj}[theorem]{Conjecture}
\theoremstyle{definition}
\newtheorem{defin}[theorem]{Definition}
\newtheorem{rem}[theorem]{Remark}
\newtheorem{notation}[theorem]{Notation}
  \let\div\relax
  \DeclareMathOperator{\div}{div}
\DeclareMathOperator{\Capa}{cap}
\DeclareMathOperator{\dist}{dist}
\DeclareMathOperator{\diam}{diam}
\DeclareMathOperator{\graph}{graph}
\newcommand{\mres}{\mathbin{\vrule height 1.6ex depth 0pt width
0.13ex\vrule height 0.13ex depth 0pt width 1.3ex}}
\newcommand{\cp}[2]{\int_{#1}{\vert\nabla #2\vert^2}dx}
\title[The  sharp quantitative isocapacitary inequality]{The  sharp quantitative isocapacitary inequality}
\author[G. De Philippis]{Guido De Philippis}
\address{G.D.P.: SISSA, Via Bonomea 265, 34136 Trieste, Italy}
\email{guido.dephilippis@sissa.it}
\author[M. Marini]{Michele Marini}
\address{M.M: SISSA, Via Bonomea 265, 34136 Trieste, Italy}
\email{mmarini@sissa.it}
\author[E. Mukoseeva]{Ekaterina Mukoseeva}
\address{E.M: SISSA, Via Bonomea 265, 34136 Trieste, Italy}
\email{emukosee@sissa.it}
\subjclass[2010]{49R05 (47A75 49Q20)}
\keywords{Isocapacitary inequality, stability estimates, Fraenkel asymmetry}
\begin{document}

\begin{abstract}
We prove a sharp quantitative form of the classical isocapacitary inequality. Namely,  we show that the difference between the capacity of a set and that of a ball with the same volume bounds the square of the Fraenkel asymmetry of the set. This provides a positive answer to a conjecture of Hall, Hayman, and Weitsman (J. d' Analyse Math. '91).
\end{abstract}

\maketitle

\section{Introduction}
	
\subsection{Background}
Let $\Omega\subset\mathds{R}^N$, $N\ge 3$ be an open set. We define the \emph{ absolute capacity} of $\Omega$ as \begin{equation}\label{capdef}
\Capa(\Omega)=\inf_{u\in C_c^\infty(\mathds{R}^N)}\left\{\int_{\mathds{R}^N}{\vert\nabla u\vert^2}dx: u\ge1\text{ on }\Omega\right\}.
\end{equation}
Moreover, for $\Omega\subset B_R$  (\(B_R\) the ball of radius \(R\) centered at the origin) we denote by  $\Capa_R(\Omega)$ the \emph{relative capacity} of $\Omega$ with respect to $B_R$ defined as
\begin{equation}\label{relcapdef}
\Capa_R(\Omega)=\inf_{u\in C_c^\infty(B_R)}\left\{\int_{B_R}{\vert\nabla u\vert^2}dx: u\ge 1\text{ on }\Omega\right\}.
\end{equation}	
It  is easy to see that  for problem \eqref{capdef} (resp. \eqref{relcapdef}) there exists  a unique  function\footnote{Here and in the sequel, \(D^{1,2}(\mathds{R}^N)\) denotes  the closure of \(C^\infty_c(\mathds{R}^N)\) with respect to the homogeneous  Sobolev norm: 
\[
 \|u\|_{ \dot{W}^{1,2}}:=\| \nabla u\|_{L^2},
\]
 see~\cite[Section 4.7]{EvansGariepy15} and~\cite[Chapter 8]{LiebLoss97}}  
 \(u\in D^{1,2}(\mathds{R}^N)\)  (resp. \(u_R\in W^{1,2}_{0}(B_R)\)) called  {\it capacitary potential} of $\Omega$ such that
\[
\int_{\mathds{R}^N} |\nabla u|^2=\Capa(\Omega) \qquad \Bigl(\text{resp.} \int_{B_R} |\nabla u_R|^2=\Capa_R(\Omega) \Bigr).
\]

%
%
Moreover, they satisfy the  Euler-Lagrange equations:			
\begin{equation*}
\begin{cases}
\Delta u=0\text{ in }\overline{\Omega}^c \\
u=1\text{ on }\partial \Omega \\
u(x)\rightarrow 0\text{ as }x\rightarrow 0
\end{cases}
\qquad
\begin{cases}
\Delta u_R=0\text{ in }B_R\setminus \overline{\Omega} \\
u_R=1\text{ on }\partial \Omega \\
u_R=0\text{ on }\partial B_R.
\end{cases}
\end{equation*}

The well-known {\it isocapacitary inequality} (resp. {\it relative isocapacitary inequality}) asserts that, among all sets with given volume, balls (resp. ball centered at the origin) have the smallest possible capacity, namely
\begin{equation}\label{iso1}
\Capa(\Omega)-\Capa(B_r)\ge 0  \qquad  \text{(resp. }\mathrm{cap_R}(\Omega)-\mathrm{cap_R}(B_r)\ge 0).
\end{equation}
Here $r$ is such that $|B_r|=|\Omega|$, where $|\cdot|$ denotes the Lebesgue measure.\\

The proof is an easy combination of  {\it Schwarz symmetrization} with  {\it P\'olya-Szeg\"o principle}. Indeed, let $\Omega$ be an open set and let $u$ be its capacitary potential. Schwarz symmetrization provides us with a radially symmetric function $u^*$ such that, for every $t\in\mathds R$,
\begin{equation}\label{equim}
\left\vert\{x: u(x)>t\}\right\vert=\left\vert\{x: u^*(x)>t\}\right\vert.
\end{equation}
We use $u^*$ as a test function for the set $\{x\,:\, u^*(x)=1\}=B_r$  and we note that  \eqref{equim} yields that $|B_r|=|\Omega|$). Hence  
\[
\Capa(B_r)\leq\int_{\mathds{R}^N}{\vert\nabla u^*\vert^2}dx\le\int_{\mathds{R}^N}{\vert\nabla u\vert^2}dx
=\Capa(\Omega) \qquad |\Omega|=|B_r|,
\]
where the second inequality follows by  P\'olya-Szeg\"o principle. The very same argument  applies to  the relative isocapacitary inequality.

Inequalities \eqref{iso1} are rigid, namely, equality is attained only when $\Omega$ coincides with a ball, up to a set of zero capacity. For the case of the relative isocapacitary inequality $\Omega$ must  instead coincide with a centered ball, since this latter notion of capacity is not invariant under translations.

\medskip

It is natural to wonder whether these inequalities are also stable, that is $\Omega\to B_r$, whenever $\Capa(\Omega)\to\Capa(B_r)$. In particular, one aims to a (possibly sharp) quantitative enhancement of inequalities \eqref{iso1} by replacing their right-hand side with some function of the distance of  \(\Omega\) from the set of balls.

As we shall explain in the following sections, the answer is positive, and a good choice of distance is the so-called {\it Fraenkel asymmetry}.

	\begin{defin}
		Let $\Omega$ be an open set. The Fraenkel asymmetry of \(\Omega\),  $\mathcal{A}(\Omega)$, is defined as:
		\begin{equation*}
			\mathcal{A}(\Omega)=\inf\left\{\frac{\vert\Omega\Delta B\vert}{\vert B\vert}\,:\, B \text{ is a ball with the same volume as }\Omega\right\}.
		\end{equation*}
	\end{defin}	
To the best of our knowledge, the first results in this direction  appeared in \cite{HHW} where they considered the  case of simply connected planar sets 
\footnote{Note that for \(N=2\) the infimum  \eqref{capdef} is \(0\) and one has to use the notion of logarithmic capacity.} 
and of convex sets in general dimension.  In the same paper the authors conjecture the validity of the following  inequality:
	\begin{conj} [\cite{HHW}]\label{conj:HHWN}
		Let \(N\ge 3\).  There exists a constant $c=c(N)$ such that for any open set $\Omega$ the following inequality holds:
		\begin{equation*}
			\frac{\Capa(\Omega)-\Capa(B_r)}{r^{N-2}}\geq c\mathcal{A}(\Omega)^2.
		\end{equation*}
	\end{conj}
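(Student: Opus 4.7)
The plan is to follow the \emph{selection principle} strategy of Cicalese--Leonardi, adapting it from the perimeter setting to the capacity functional. After scaling we may assume $|\Omega|=|B_1|$, so the conjecture reduces to $\Capa(\Omega)-\Capa(B_1)\ge c\,\mathcal{A}(\Omega)^2$. A simple compactness argument, using the rigidity of the isocapacitary inequality and the lower semicontinuity of $\Capa$, handles sets with $\mathcal{A}(\Omega)\ge \delta_0$ for any fixed $\delta_0>0$. The problem therefore reduces to the regime of vanishing asymmetry, and one can argue by contradiction: fix a sequence $\Omega_n$ with $\mathcal{A}(\Omega_n)=\varepsilon_n\to 0$ and deficit $\Capa(\Omega_n)-\Capa(B_1)\le \sigma_n\varepsilon_n^2$ for some $\sigma_n\to 0$, and aim at ruling out such behavior.

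The key technical step is to replace each $\Omega_n$ by a \emph{quasi-minimizer} $\tilde\Omega_n$ of a penalized functional of the shape
\begin{equation*}
\mathcal{F}_n(\Omega)=\Capa(\Omega)+\Lambda_n\bigl||\Omega|-|B_1|\bigr|-\eta_n\,\bigl|\mathcal{A}(\Omega)-\varepsilon_n\bigr|,
\end{equation*}
with $\Lambda_n,\eta_n$ tuned so that $\tilde\Omega_n$ retains asymmetry of the same order as $\varepsilon_n$ but still violates the desired inequality. Since $\tilde\Omega_n$ is an almost-minimizer of the capacity among sets of prescribed asymmetry, its free boundary should be amenable to an Alt--Caffarelli-type $\varepsilon$-regularity theory for the Bernoulli-like problem solved by its capacitary potential. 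As $n\to\infty$, one expects $\tilde\Omega_n\to B_1$ in $C^{1,\alpha}$, so that for $n$ large $\partial\tilde\Omega_n$ can be written as a normal graph $\{(1+\varphi_n(\omega))\omega:\omega\in \mathbb{S}^{N-1}\}$ with $\|\varphi_n\|_{C^{1,\alpha}}\to 0$, and the volume and barycenter of $\tilde\Omega_n$ can be normalized to kill the scaling and translation gauges of $\Capa$.

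The remaining step is a \emph{Fuglede-type} inequality for nearly spherical sets: whenever $\varphi$ is a small normal perturbation satisfying the two gauge conditions above, one must show
\begin{equation*}
\Capa(\Omega)-\Capa(B_1)\ge c\,\|\varphi\|_{H^{1/2}(\mathbb{S}^{N-1})}^{2}\ge c'\,\mathcal{A}(\Omega)^{2}.
\end{equation*}
Both follow from an explicit second-order Taylor expansion of $\Capa$ around $B_1$, using the radial capacitary potential of the ball and a spherical-harmonics diagonalization of the Hessian: one obtains a diagonal quadratic form with eigenvalues growing in the harmonic degree $k$, so that the modes $k\ge 2$ are strictly positive while the zeroth and first modes are pinned to zero by the normalizations. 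Applied to $\tilde\Omega_n$, this Fuglede-type estimate produces the contradiction.

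The main obstacle, in my view, is the regularity theory for the penalized problem. Unlike the perimeter functional, $\Capa$ is a nonlocal PDE energy, and quasi-minimality translates into a one-phase Bernoulli-type condition on the capacitary potentials of $\tilde\Omega_n$. Establishing uniform $C^{1,\alpha}$ convergence of $\tilde\Omega_n$ to $B_1$ therefore requires a careful blow-up and improvement-of-flatness analysis for these potentials, together with quantitative control of how the asymmetry penalization perturbs the Euler--Lagrange condition on $\partial\tilde\Omega_n$.
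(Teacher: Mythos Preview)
Your outline matches the paper's strategy---selection principle, Alt--Caffarelli regularity, Fuglede computation via spherical harmonics---and you correctly identify the free-boundary regularity as the technical heart. However, there is a genuine missing ingredient: the reduction to \emph{a priori bounded} sets. For the absolute capacity, competitors live in all of $\mathds{R}^N$, so neither your ``simple compactness argument'' for $\mathcal{A}(\Omega)\geq\delta_0$ nor the existence and regularity theory for the penalized problem goes through without first confining $\Omega$ to a fixed ball $B_R$. Compactness fails outright: capacitary potentials lie only in $D^{1,2}(\mathds{R}^N)$, and translation invariance lets a contradicting sequence escape to infinity or fragment. The paper devotes a separate section to this: given $\Omega$ with small deficit $D(\Omega)$, one constructs $\tilde\Omega\subset B_d$ with $|\tilde\Omega|=|B_1|$, $D(\tilde\Omega)\leq CD(\Omega)$, and $\mathcal{A}(\tilde\Omega)\geq\mathcal{A}(\Omega)-CD(\Omega)$. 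The proof uses a quantitative comparison of $\Capa(\Omega)$ with $\Capa(\Omega\cap B_S)$ via Sobolev embedding applied to the difference of potentials, combined with the non-sharp Fusco--Maggi--Pratelli inequality (exponent $4$) as input to bootstrap control on the tail $|\Omega\setminus B_S|$. Without this reduction the selection principle cannot even be set up.

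Two smaller points where your sketch diverges from what actually works. First, the paper replaces $\mathcal{A}$ by the smoother barycenter-based asymmetry $\alpha(\Omega)=\int_{\Omega\Delta B_1(x_\Omega)}\bigl|1-|x-x_\Omega|\bigr|\,dx$ in the penalized functional; the Fraenkel asymmetry involves an infimum over centers and does not yield a clean Euler--Lagrange condition for the free-boundary density $q_u$, which is precisely what one feeds into the improvement-of-flatness theorem. Second, the sign of your penalty $-\eta_n|\mathcal{A}(\Omega)-\varepsilon_n|$ is backwards: minimizing it pushes $\mathcal{A}$ \emph{away} from $\varepsilon_n$. The paper instead adds $+\sqrt{\epsilon_j^2+\sigma^2(\alpha-\epsilon_j)^2}$, which is both correctly oriented and $C^1$ with small Lipschitz constant, so that it perturbs the Alt--Caffarelli structure only mildly.
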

	
Note that by testing the inequality on ellipsoids with eccentricity \(\varepsilon\) one easily sees that the exponent \(2\) can not be replaced by any smaller number. 

A positive answer to the above conjecture in dimension \(2\) has been given by Hansen and Nadirashvili  in \cite{HN}. For general dimension,  the best known result is due to Fusco, Maggi, and Pratelli in \cite{nonsharp} where they prove the following:

 \begin{theorem} [\cite{nonsharp}]\label{t:fmp}
		There exists a constant $c=c(N)$ such that, for any open set $\Omega$
		\begin{equation*}
			\frac{\Capa(\Omega)-\Capa(B_r)}{r^{N-2}}\geq c\mathcal{A}(\Omega)^4.
		\end{equation*}
	\end{theorem}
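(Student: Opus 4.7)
The strategy is to reduce the capacitary deficit to a family of isoperimetric deficits on the super-level sets of the capacitary potential, to apply the sharp quantitative isoperimetric inequality on each slice, and finally to transfer the asymmetry information back to $\Omega$.

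Let $u$ be the capacitary potential of $\Omega$, and set $E_t:=\{u>t\}$, $\mu(t):=|E_t|$. The coarea formula together with Cauchy--Schwarz on each level set (writing $1=|\nabla u|^{1/2}\cdot|\nabla u|^{-1/2}$ and using $-\mu'(t)=\int_{\{u=t\}}|\nabla u|^{-1}\,d\mathcal H^{N-1}$) yield the classical lower bound
\[
\Capa(\Omega)=\int_{\mathds{R}^N}|\nabla u|^2\,dx\;\geq\;\int_0^1\frac{P(E_t)^2}{-\mu'(t)}\,dt,
\]
which is an equality for the explicit radial capacitary potential of $B_r$. Combining this with the classical isoperimetric inequality $P(E_t)\geq P_{\mathrm{iso}}(\mu(t)):=N\omega_N^{1/N}\mu(t)^{(N-1)/N}$, the elementary identity $P(E_t)^2-P_{\mathrm{iso}}^2\geq 2P_{\mathrm{iso}}(P(E_t)-P_{\mathrm{iso}})$, and an optimization of the resulting radial integral in the profile $\mu$, we obtain
\[
\Capa(\Omega)-\Capa(B_r)\;\gtrsim\;\int_0^1 \omega(t)\bigl[P(E_t)-P_{\mathrm{iso}}(\mu(t))\bigr]\,dt
\]
for an explicit positive weight $\omega$.

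Inserting the sharp quantitative isoperimetric inequality $P(E_t)-P_{\mathrm{iso}}(\mu(t))\geq c\,P_{\mathrm{iso}}(\mu(t))\,\mathcal A(E_t)^2$ on each slice reduces the problem to the purely geometric bound
\[
\int_0^1 \widetilde\omega(t)\,\mathcal A(E_t)^2\,dt\;\gtrsim\;\mathcal A(\Omega)^4\cdot r^{N-2}.
\]
The remaining step is to transfer asymmetry from the thickenings $E_t\supseteq\Omega$ back to $\Omega$ itself. The naive bound $\mathcal A(E_t)\geq \mathcal A(\Omega)$ fails in general, since a very asymmetric set can have rather round level sets. Comparing $\Omega$ to a ball of volume $|\Omega|$ concentric with the optimal ball for $E_t$ gives
\[
\mathcal A(\Omega)\;\leq\;\lambda(t)\,\mathcal A(E_t)+2(\lambda(t)-1),\qquad \lambda(t):=|E_t|/|\Omega|,
\]
so $\mathcal A(E_t)\gtrsim \mathcal A(\Omega)$ whenever $\lambda(t)-1\lesssim \mathcal A(\Omega)$. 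Localizing the preceding integral to this window of $t$ near $1$, and estimating its length and the value of $\widetilde\omega$ there from the regularity of $\mu$ at $t=1$, yields the desired $\mathcal A(\Omega)^4$ bound.

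\textbf{Main obstacle.} The crucial difficulty lies in the final transfer step: the window of $t$ on which $\mathcal A(E_t)\gtrsim \mathcal A(\Omega)$ is itself of length only $\mathcal O(\mathcal A(\Omega))$, and multiplying this short interval with the $\mathcal A(E_t)^2$ appearing in the integrand loses one extra power of $\mathcal A(\Omega)$ compared to what would give the sharp exponent $2$. Closing this gap requires a genuinely different argument, which is precisely the main contribution of the paper.
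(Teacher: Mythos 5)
The paper does not prove Theorem~\ref{t:fmp}: it is cited verbatim from Fusco--Maggi--Pratelli~\cite{nonsharp}, and the only indication of its proof here is the one-sentence description in the introduction (combine the sharp quantitative isoperimetric inequality of~\cite{FuscoMaggiPratelli08} with Schwarz symmetrization). Your sketch is in that spirit, so it is consistent with the cited route; but as written it contains a genuine gap, not merely the sharpness bottleneck you acknowledge at the end.

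The gap is in the transfer step. First, the weight $\widetilde\omega$ is not under control: after passing to the coarea bound and expanding $P(E_t)^2-P_{\mathrm{iso}}^2\geq 2P_{\mathrm{iso}}(P(E_t)-P_{\mathrm{iso}})$, the weight carries the factor $1/(-\mu'(t))$, which depends on the distribution function of the unknown potential $u$ and is not an explicit positive function of $t$. Second, and more seriously, the phrase ``estimating its length and the value of $\widetilde\omega$ there from the regularity of $\mu$ at $t=1$'' is unfounded: for an arbitrary open set $\Omega$ the distribution function $\mu(t)=|\{u>t\}|$ enjoys no a priori regularity at $t=1$, so the window $\{t:\lambda(t)-1\lesssim\mathcal A(\Omega)\}$ can be arbitrarily short in $t$; and precisely when it is short, $-\mu'$ is large there, so the same pathology makes $\widetilde\omega$ small. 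You therefore have neither a lower bound on the window length nor on the weight, and these two failures compound rather than cancel. Finally, the exponent accounting in your ``obstacle'' paragraph is internally inconsistent: a window of length $\mathcal O(\mathcal A(\Omega))$ (an upper bound, where you need a lower bound) against an integrand $\gtrsim\mathcal A(\Omega)^2$ would give $\mathcal A(\Omega)^3$, not the $\mathcal A(\Omega)^4$ you announce two lines earlier, and neither figure is actually established. Making this strategy quantitative --- in particular choosing the right comparison level and controlling the loss from $-\mu'$ --- is precisely the technical content of~\cite{nonsharp} that your sketch leaves open.
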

	
In this paper we  provide a positive answer to Conjecture \ref{conj:HHWN} in every dimension and to its version for the relative capacity.	

	\subsection{Main result}
	
	The following is the main result of the paper, note that by the scaling \(\Capa(\lambda \Omega)=\lambda^{N-2}\Capa(\Omega)\), we can also get the analogous result for  $\Omega$ with  arbitrary volume.

	\begin{theorem} \label{mainthmfrnkl}
		Let $\Omega$ be an open set such that $\vert\Omega\vert=\vert B_1\vert$.
		Then
		\begin{enumerate}[label=\textup{(\Alph*)}]
			\item if $\Omega$ is contained in $B_R$, there exists a constant $c_1=c_1(N,R)$ such that the following inequality holds:
				\begin{equation*}
					\Capa_R(\Omega)-\Capa_R(B_1)\geq c_1(N,R)|\Omega\Delta B_1|^2.
				\end{equation*}
			\item there exists a constant $c_2=c_2(N)$ such that the following inequality holds:
				\begin{equation*}
					\Capa(\Omega)-\Capa(B_1)\geq c_2(N)\mathcal{A}(\Omega)^2.
				\end{equation*}
				
		\end{enumerate}
	\end{theorem}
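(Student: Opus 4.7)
The plan is to follow the by now standard two-step strategy for sharp quantitative isoperimetric-type inequalities: first establish the bound for \emph{nearly spherical} sets via a Fuglede-type expansion in spherical harmonics, then reduce the general bounded case (Part (A)) to nearly spherical sets via a Cicalese--Leonardi selection principle, and finally deduce the unbounded statement (Part (B)) from Part (A) by truncating $\Omega$ at a large ball.

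For the Fuglede step, I parametrize $\partial\Omega=\{(1+u(\omega))\omega:\omega\in S^{N-1}\}$ with $u\in C^1(S^{N-1})$ small and expand $u=\sum_{k\ge 0}a_k Y_k$ in spherical harmonics. The volume constraint $|\Omega|=|B_1|$ forces $a_0=O(\|u\|^2)$. Since $B_1$ is a critical point of the isocapacitary problem, the first variation of $\Capa_R$ at $B_1$ vanishes; computing the second variation using the explicit radial capacitary potential of $B_1$ reduces, by harmonicity, to evaluating the Dirichlet-to-Neumann operator of $B_R\setminus\overline{B_1}$ on each spherical harmonic. Its eigenvalues $\lambda_k$ are strictly positive for every $k\ge 1$, and the $k=1$ mode is nondegenerate since $\Capa_R$ is \emph{not} translation invariant. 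Hence, for $\Omega$ nearly spherical,
\[
\Capa_R(\Omega)-\Capa_R(B_1)\;\ge\;c\sum_{k\ge 1}\lambda_k a_k^2\;\ge\;c'\|u\|_{L^2(S^{N-1})}^2\;\ge\;c''|\Omega\Delta B_1|^2,
\]
where the last inequality uses $|\Omega\Delta B_1|\lesssim\|u\|_{L^1(S^{N-1})}$.

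For the selection principle, suppose by contradiction a sequence $\Omega_n\subset B_R$ with $|\Omega_n|=|B_1|$, $d_n:=|\Omega_n\Delta B_1|\to 0$, and $\Capa_R(\Omega_n)-\Capa_R(B_1)=o(d_n^2)$. I replace $\Omega_n$ by a minimizer $\tilde\Omega_n$ of a penalized functional of the form
\[
\mathcal F_n(\Omega):=\Capa_R(\Omega)+\Lambda\bigl||\Omega\Delta B_1|-d_n\bigr|+\mu\bigl||\Omega|-|B_1|\bigr|,
\]
with multipliers $\Lambda,\mu$ tuned so that the penalizations activate. Comparing $\mathcal F_n(\tilde\Omega_n)$ with small inward/outward perturbations of $\tilde\Omega_n$ yields a quasi-minimality property of the associated capacitary potential as a one-phase Bernoulli-type free boundary; Alt--Caffarelli-style regularity then forces $\partial\tilde\Omega_n\to\partial B_1$ in $C^{1,\alpha}$. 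The Fuglede step applies to $\tilde\Omega_n$, contradicting the choice of the sequence.

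Part (B) is obtained from Part (A) by truncation. Choose $B_1(x_0)$ realizing the Fraenkel asymmetry and translate so that $x_0=0$. If $\Capa(\Omega)-\Capa(B_1)$ is bounded below by a universal constant there is nothing to prove, so assume $\Omega$ is close to $B_1$ in capacity; decay estimates on its capacitary potential allow one to replace $\Omega$ by $\Omega\cap B_R$ for a universal $R>1$, paying an error controlled by $\Capa(\Omega)-\Capa(B_1)$ itself, and then restore the volume by a small perturbation and apply Part (A). The main obstacle is the selection principle: since $\Capa_R$ is nonlocal, one cannot extract quasi-minimality from local energy bounds as in perimeter-based problems, and establishing $C^{1,\alpha}$ convergence requires a careful analysis of the free boundaries of the capacitary potentials of $\tilde\Omega_n$ near $\partial B_1$, which is where the bulk of the technical work is expected.
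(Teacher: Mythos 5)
For Part (A), your overall architecture matches the paper's: a Fuglede-type second-variation estimate for nearly spherical sets followed by a Cicalese--Leonardi selection principle and Alt--Caffarelli regularity. The one substantive deviation is the penalization $\Lambda\bigl||\Omega\Delta B_1|-d_n\bigr|$: the paper instead replaces $|\Omega\Delta B_1|$ by the \emph{smoothed} asymmetry $\alpha_R(\Omega)=\int_{\Omega\Delta B_1}|1-|x||\,dx$ and smooths the absolute value to $\sqrt{\epsilon_j^2+\sigma^2(\alpha_R(\Omega)-\epsilon_j)^2}$. This is not cosmetic. The Euler--Lagrange identity for the minimizer expresses $q_{u_j}^2$ minus the shape derivative of the penalization as a constant along $\partial\Omega_j$; for $\alpha_R$ that shape-derivative density is the continuous function $|1-|x||$, whereas for $|\Omega\Delta B_1|$ it is $\mathrm{sign}(1-|x|)$, which jumps exactly where $\partial\Omega_j$ crosses $\partial B_1$. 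With your choice $q_{u_j}$ would not even be continuous, and the flatness-implies-$C^{1,\gamma}$ machinery for the free boundary would break; you would have to switch to $\alpha_R$ for the regularity step to go through.

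The serious gap is the deduction of Part (B) from Part (A). After translating and truncating to $\tilde\Omega\subset B_R$, you would need $\Capa(\tilde\Omega)-\Capa(B_1)\gtrsim\Capa_R(\tilde\Omega)-\Capa_R(B_1)$ in order to invoke Part (A), but this inequality goes the wrong way. At second order the two deficits differ precisely on the $k=1$ spherical-harmonic mode: for the absolute capacity the relevant Dirichlet-to-Neumann eigenvalue equals $N-1$ exactly, so the second variation vanishes there (translation invariance), while for the relative capacity it equals $N-1+N/(R^N-1)>N-1$. Thus a set $\tilde\Omega$ close to a translated ball $B_1(\epsilon e_1)$ has $\Capa(\tilde\Omega)-\Capa(B_1)=o(\epsilon^2)$ but $\Capa_R(\tilde\Omega)-\Capa_R(B_1)\sim\epsilon^2$, and the ratio is unbounded. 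Centering at the Fraenkel-optimal ball does not fix this, since it minimizes an $L^1$ quantity and does not annihilate the $L^2$ projection onto the linears. This is exactly why the paper does not derive Part (B) from Part (A): it runs the full nearly-spherical and selection-principle argument in parallel for the \emph{absolute} capacity on bounded domains (Theorem \ref{mainthmbdd} with $*=\,$absolute), using the barycenter-centered asymmetry $\alpha(\Omega)=\int_{\Omega\Delta B_1(x_\Omega)}|1-|x-x_\Omega||\,dx$, which kills the $k=1$ mode at leading order and makes the second variation coercive on the orthogonal complement of constants and linears. Truncation (your step, and the paper's Lemma \ref{reducetobdd}) then reduces to a universal diameter, but it must be combined with that separate absolute-capacity statement; truncation plus Part (A) alone does not close the argument.
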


		Note that in the above theorem,  in the case of the absolute capacity one  bound  the distance of \(\Omega\) from the set of balls, while in the case of the relative capacity one bounds the distance from the ball \emph{centered at the origin} but the constant is \(R\) dependent. Indeed in the former case all balls have the same capacity (due to the translation invariance of the problem) and thus in order to obtain a quantitative improvement, one has to measure the distance from the set of \emph{all minimizers}. On the contrary, for the relative capacity,   the ball centered at the origin  is the only minimizer.  Since 
		\[
		\lim_{R\to +\infty}\Capa_R(\Omega)=\Capa(\Omega),
		\]
		it is clear that the constant in (B) above needs to depend on  \(R\). This can also be inferred by the study of the linearized problem, see Section  \ref{sec:lin} below. We also remark that,  as it will be clear from the proof, in the case of the relative capacity one can replace  \(|\Omega\Delta B_1|\) with the bigger quantity \(\alpha_R(\Omega)\) defined in Section \ref{sec:lin} below.


	\subsection{Strategy of the proof and  structure  of the paper}
	Since the isocapacitary inequality is a consequence of the isoperimetric inequality, a reasonable strategy to obtain a quantitative improvement would be to rely on a quantitative isoperimetric inequality. This was indeed the strategy used in \cite{nonsharp} where they rely on the quantitative isoperimetric inequality established  in~\cite{FuscoMaggiPratelli08}. However, although the inequality proved in ~\cite{FuscoMaggiPratelli08} is sharp, in order to combine it with the Schwarz symmetrization procedure,  it seems unavoidable to lose some exponent and to obtain a result in line with the one in \cite{nonsharp}.
		
	Here we instead rely on the techniques developed by the first author with Brasco and Velichkov in \cite{fkstab} to obtain a quantitative form of the  Faber-Krahn inequality (see also~\cite{BrascoDe-Philippis17} and references therein for a survey on these type of results). The proof is  based on  the Selection Principle,  introduced
	by Cicalese and Leonardi in \cite{CL} to give  a new proof of the sharp quantitative isoperimetric inequality,  combined with the regularity estimates for free boundary problems obtained by Alt and Caffarelli in \cite{AC}. As in \cite{fkstab}, one of the key technical tools is to replace the Fraenkel asymmetry (which roughly resembles a \(L^1\) type norm) with a smoother (and stronger)  version inspired by the distance among sets first used by Almgren Taylor and Wang in~\cite{AlmgrenTaylorWang93} which resembles an \(L^2\) type norm, see Section \ref{sec:lin} for the exact definition.

We conclude this  introduction by giving an account of the main steps of the proof and of the structure of the paper:

The main  step consists in proving  Theorem \ref{mainthmfrnkl} for a priori  bounded sets  in the regime of small asymmetry. Arguing  by contradiction  one obtains a sequence of sets contradicting the stability inequality with any given constant \(c>0\).   In Sections \ref{penpb} and \ref{s:ex}  we use this sequence to construct an improved contradicting  sequence  which solves a variational problem.
	
	In Section \ref{reg}, we exploit the regularity theory of \cite{AC} to show that this new sequence consists of  smooth {\it nearly spherical sets}, for which the desired estimate is proved in Section \ref{fuglede}, via a Fuglede type computation~\cite{Fuglede89}. 		
	In Section \ref{redtobdd}, we show how one can reduce to a priori bounded domains for the case of the absolute capacity. Eventually, in Section \ref{s:proof} we combine all the steps to prove Theorem \ref{mainthmfrnkl}.

 \subsection*{Acknowledgements}
The work of the authors  is supported by the INDAM-grant ``Geometric Variational Problems".

	\section{Fuglede's computation}	\label{fuglede}
	
As explained in the introduction it is convenient to introduce a smoothed version of the Fraenkel asymmetry. Roughly speaking, while \(\mathcal A(\Omega)\) represents an \(L^1\) norm, \(\alpha(\Omega)\) represents an \(L^2\) norm, see \ref{asymnrlysphrsets} in Lemma \ref{propasym} below and the discussion in  \cite[Introduction]{fkstab}.

	\begin{defin}
		Let $\Omega$ be an open set in $\mathds{R}^N$.
		Then we define the asymmetry $\alpha$ in the following way:
		\begin{enumerate}[label=(\Alph*)] 
			
			\item	
				\begin{equation*}
					\alpha_R(\Omega)=\int_{\Omega\Delta B_1}\big\vert 1-\vert x\vert\big\vert dx;
				\end{equation*}
			\item 
				\begin{equation*}
					\alpha(\Omega)=\int_{\Omega\Delta B_1(x_\Omega)}\big\vert 1-\vert x-x_\Omega\vert\big\vert dx.
				\end{equation*}
		\end{enumerate}
		Here $x_\Omega$ denotes the barycenter of $\Omega$, namely $x_\Omega=\fint_{\Omega}{x}dx$.
	\end{defin}	
	
	Since most of the argument will  be similar for the relative and for the absolute capacity, let us also introduce the following notational convention:
	
	\begin{notation}\label{notaz} Whenever possible, we will write $\alpha_*$,$\Capa_*$, etc. instead of $\alpha$/$\alpha_R$,
	$\Capa$/$\Capa_R$ or other notions that will come along. The convention is that $*$ denotes the same thing ($R$
	or the absence of it) throughout the equation or the computation where it appears. 
	\end{notation}	
	
	The next Lemma collects the main properties of \(\alpha\), the proof is identical to the one of  \cite[Lemma 4.2]{fkstab} and it is left to the reader.
	\begin{lemma} \label{propasym}
	Let \(\Omega \subset \mathds R^n\), then 
		\begin{enumerate}[label=\textup{(\roman*)}]
			\item \label{compasym} There exists a constant $c=c(N)$ such that
				\begin{enumerate}[label=\textup{(\Alph*)}] 
					\item
						\begin{equation*}
							\alpha_R(\Omega)\geq c\vert\Omega\Delta B_1\vert^2
						\end{equation*}
						for any open set $\Omega\subset B_R$;
					\item
						\begin{equation*}
							\alpha(\Omega)\geq c\vert\Omega\Delta B_1(x_\Omega)\vert^2
						\end{equation*}
						for any open set $\Omega$.
				\end{enumerate}		
			\item \label{asymlip} There exists a constant $C=C(R)$ such that	
				\begin{equation*}
					\vert\alpha_*(\Omega_1)-\alpha_*(\Omega_2)\vert\leq C\vert\Omega_1\Delta \Omega_2\vert
				\end{equation*}
				for any $\Omega_1,\Omega_2\subset B_R$. In particular, if $1_{\Omega_k}\rightarrow 1_\Omega$ in $L^1(B_R)$
				then  $\alpha_*(\Omega_k)\rightarrow\alpha_*(\Omega)$.
			\item \label{asymnrlysphrsets}
			There exist constants $C=C(N)$, $\delta=\delta(N)$ such that
			for every nearly spherical set (see Definition \ref{d:ns} below) $\Omega$ with $\Vert\phi\Vert_{\infty}\leq\delta$ 
			(and $x_\Omega=0$ in the case of $\alpha$)
			\begin{equation*}
				\alpha_*(\Omega)\leq C\Vert\phi\Vert_{L^2(\partial B_1)}^2.
			\end{equation*}	
		\end{enumerate}
	\end{lemma}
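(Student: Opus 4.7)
The plan is to prove the three items of Lemma \ref{propasym} separately; only \ref{compasym} requires a genuine argument, while \ref{asymlip} and \ref{asymnrlysphrsets} reduce to elementary computations.

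For \ref{compasym}, set $E = \Omega\Delta B_1$ (centered at $x_\Omega$ in the absolute case) and $m = \vert E\vert$. I would invoke the bathtub principle with weight $w(x) := \big\vert 1-\vert x\vert\big\vert$: since the sub-level sets $\{w<t\}$ are symmetric annuli $B_{1+t}\setminus B_{1-t}$ (intersected with $B_R$ in the relative case), the minimum of $\int_E w\,dx$ at fixed $\vert E\vert$ is attained on such annuli. A one-dimensional computation in polar coordinates then gives $\vert\{w<t\}\vert \sim c\, t$ and $\int_{\{w<t\}} w\,dx \sim c'\, t^2$ for small $t$, so eliminating $t$ yields $\alpha_*(\Omega)\geq c\, m^2$ for small $m$. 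Since $m$ is a priori bounded (by $2\vert B_R\vert$ or by $2\vert B_1\vert$), the estimate extends to all admissible $m$ up to adjusting the constant.

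For \ref{asymlip}, the set-theoretic identity $(A\Delta C)\Delta(B\Delta C) = A\Delta B$ applied with $C = B_1$, together with the pointwise bound $w\leq R$ on $B_R$, gives
\begin{equation*}
\vert\alpha_R(\Omega_1)-\alpha_R(\Omega_2)\vert \leq \int_{(\Omega_1\Delta B_1)\Delta(\Omega_2\Delta B_1)} w\,dx \leq R\,\vert\Omega_1\Delta \Omega_2\vert.
\end{equation*}
For the absolute case one additionally observes that the barycenter map $\Omega \mapsto x_\Omega$ is Lipschitz with respect to the symmetric difference on sets of fixed volume in $B_R$, a direct estimate; this combines with the same inequality applied at the shifted ball. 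The $L^1$-convergence statement is then an immediate consequence.

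For \ref{asymnrlysphrsets}, parametrize $\partial\Omega = \{(1+\phi(\theta))\theta : \theta\in\partial B_1\}$ and write, in polar coordinates,
\begin{equation*}
\alpha_*(\Omega) = \int_{\partial B_1}\int_{\min(1,1+\phi(\theta))}^{\max(1,1+\phi(\theta))}\vert 1-r\vert\, r^{N-1}\,dr\,d\mathcal{H}^{N-1}(\theta).
\end{equation*}
For $\Vert\phi\Vert_\infty\leq \delta$ small, the substitution $r = 1+u$ together with $r^{N-1}\leq (1+\delta)^{N-1}$ bounds the inner integral pointwise by $C\,\phi(\theta)^2$, yielding $\alpha_*(\Omega)\leq C\Vert\phi\Vert_{L^2(\partial B_1)}^2$. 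The only subtle step is the bathtub identification used in \ref{compasym}; once this is granted, everything reduces to the one-variable computations sketched above.
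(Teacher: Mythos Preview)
Your argument is correct and is precisely the standard one the paper defers to in \cite[Lemma~4.2]{fkstab}: the bathtub principle for \ref{compasym} and direct computation for \ref{asymlip} and \ref{asymnrlysphrsets}. Note only that the constant $c=c(N)$ in \ref{compasym} and the Lipschitz bound on the barycenter you invoke in \ref{asymlip} implicitly require $|\Omega|=|B_1|$ (so that $|\Omega\Delta B_1|\le 2|B_1|$ and the barycenter denominator is bounded below), which is indeed how the lemma is applied throughout the paper.
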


		We now prove the validity of the  quantitative isocapacitary inequality for sets close to the unit ball.
		More precisely, we are going to prove Theorem \ref{mainthmfrnkl} for nearly spherical sets which are defined below. The proof is based on second variation argument as in \cite{Fuglede89}.		

		\begin{defin}\label{d:ns}
		  An open bounded set $\Omega\subset\mathds{R}^N$ is called nearly spherical
			of class $C^{2,\gamma}$ parametrized by $\varphi$, if there exists
			$\varphi\in C^{2,\gamma}$ with $\Vert\varphi\Vert_{L^\infty}<\frac{1}{2}$ such that
			\begin{equation*}
				\partial\Omega=\{(1+\varphi(x))x: x\in\partial B_1\}.
			\end{equation*}	
		\end{defin}

Let us also introduce the following definition:
		\begin{defin}
			Given a function $\varphi:\partial B_1\rightarrow\mathds{R}$ we define
			\begin{enumerate}[label=(\Alph*)] 
			\item
			$H_R(\varphi)\in W^{1,2}_0(B_R)$ as the solution to
			\begin{equation*}
				\begin{cases}
					\Delta H_R(\varphi)=0\text{ in }B_R\backslash B_1 \\
					H_R(\varphi)=\varphi\text{ on }\partial B_1 \\
					H_R(\varphi)=0\text{ on }\partial B_R
				\end{cases}
			\end{equation*}	
			\item
			$H(\varphi)\in D^{1,2}(\mathds{R}^N)$ as the solution to
			\begin{equation*}
				\begin{cases}
					\Delta H(\varphi)=0\text{ in }B_1^c \\
					H(\varphi)=\varphi\text{ on }\partial B_1 \\
					H(\varphi)(x)\rightarrow 0\text{ as }x\rightarrow\infty
				\end{cases}
			\end{equation*}	
			\end{enumerate}
		\end{defin}

		\subsection{Second variation}
		
		We now  compute the second order expansion of the  capacity of a nearly spherical set. Note that the remainder term is multiplied by a higher order norm. This is precisely the reason why  we will need to use the Selection Principle in the proof of Theorem \ref{mainthmfrnkl}.
		
		\begin{lemma} \label{taylor}
			Given $\gamma\in(0,1]$, there exists $\delta=\delta(N,\gamma)>0$
			and a modulus of continuity $\omega$ such that for every nearly spherical set $\Omega$
			parametrized by $\varphi$ with $\Vert\varphi\Vert_{C^{2,\gamma}(\partial B_1)}<\delta$
			and $\vert\Omega\vert=\vert B_1\vert$, we have
			\begin{equation*}
				\Capa_*(\Omega)\geq \Capa_*(B_1)+\frac{1}{2}\partial^2 \Capa_*(B_1)[\varphi,\varphi]
				-\omega(\Vert\varphi\Vert_{C^{2,\gamma}})\Vert\varphi\Vert^2_{H^\frac{1}{2}(\partial B_1)},
			\end{equation*}
			where
			\begin{enumerate}[label=(\Alph*)] 
			\item
			\begin{equation*}
				\partial^2 \Capa_R(B_1)[\varphi,\varphi]:=2\frac{(N-2)^2}{1-R^{-(N-2)}}\left(\int_{B_R\backslash B_1}{\vert\nabla H_R(\varphi)\vert^2}dx
				-(N-1)\int_{\partial B_1}{\varphi^2}d\mathcal{H}^{N-1}\right);
			\end{equation*}
			\item
			\begin{equation*}
				\partial^2 \Capa(B_1)[\varphi,\varphi]:=2(N-2)^2\left(\int_{B_1^c}{\vert\nabla H(\varphi)\vert^2}dx
				-(N-1)\int_{\partial B_1}{\varphi^2}d\mathcal{H}^{N-1}\right).
			\end{equation*}
			\end{enumerate}
		\end{lemma}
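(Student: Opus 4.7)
The strategy is a direct second-order Taylor expansion of $\Capa_*(\Omega)$ around the ball, in the spirit of \cite{Fuglede89}. I outline case (A); case (B) is analogous after replacing $B_R \setminus B_1$ with $B_1^c$ and $H_R$ with $H$. Let $v$ denote the explicit radial capacitary potential of $B_1$ in $B_R$, so that $v'(1) = -(N-2)/(1-R^{2-N})$. To work on a fixed domain, I introduce a smooth diffeomorphism $\Phi_\varphi \colon B_R \setminus \overline{B_1} \to B_R \setminus \overline{\Omega}$, equal to the identity near $\partial B_R$ and sending $x \in \partial B_1$ to $(1+\varphi(x))x$ (e.g.\ a radial cut-off stretch). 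Setting $\tilde u := u \circ \Phi_\varphi$ where $u$ is the capacitary potential of $\Omega$, the capacity becomes
\[
\Capa_R(\Omega) = \int_{B_R \setminus B_1}\langle M_\varphi \nabla \tilde u, \nabla \tilde u\rangle\, dx,
\]
with $M_\varphi = (D\Phi_\varphi)^{-1}(D\Phi_\varphi)^{-T}\det D\Phi_\varphi$ smooth in $\varphi$ and $M_0 = I$, and $\tilde u = 1$ on $\partial B_1$, $\tilde u = 0$ on $\partial B_R$.

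Writing $\tilde u = v + w$, the function $w$ vanishes on the boundary and solves a perturbed elliptic equation with source of order $\|\varphi\|$. Expanding the pulled-back energy produces: the zeroth-order term $\Capa_R(B_1)$; a linear shape-derivative $L(\varphi) = [v'(1)]^2\int_{\partial B_1}\varphi\, d\mathcal{H}^{N-1}$ (matching the classical Hadamard formula); and a quadratic form built from (i) the harmonic energy $\int_{B_R \setminus B_1}|\nabla H_R(\varphi)|^2$, arising because the boundary datum of $w$ on $\partial B_1$ is $-v'(1)\varphi + O(\varphi^2)$ (from the Taylor expansion $v((1+\varphi)x) = 1 + v'(1)\varphi + O(\varphi^2)$), so that $w \approx -v'(1) H_R(\varphi)$ to leading order; and (ii) a boundary term $\int_{\partial B_1} \varphi^2\, d\mathcal{H}^{N-1}$ with explicit coefficient encoding the mean curvature of $\partial B_1$ and the quadratic correction $\tfrac{1}{2}v''(1)\varphi^2$ of $v$.

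The crucial next step is to invoke the volume constraint $|\Omega| = |B_1|$, which yields
\[
\int_{\partial B_1}\varphi\, d\mathcal{H}^{N-1} = -\frac{N-1}{2}\int_{\partial B_1}\varphi^2\, d\mathcal{H}^{N-1} + O(\|\varphi\|_{L^3}^3),
\]
so that $L(\varphi)$ collapses to a purely quadratic contribution proportional to $\int \varphi^2$. Collecting all quadratic pieces and substituting the value of $v'(1)$ reproduces exactly $\tfrac{1}{2}\partial^2 \Capa_R(B_1)[\varphi,\varphi]$ as in the statement.

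The nonlinear remainder consists of cubic-and-higher-order terms in $\varphi$. Each carries a quadratic factor (in $\varphi$, $\nabla_\tau \varphi$, or $\nabla H_R(\varphi)$) together with at least one extra factor that is absorbed by $\|\varphi\|_{C^{2,\gamma}}$, via Schauder estimates for $H_R$ and the equivalence $\int|\nabla H_R(\varphi)|^2 \simeq \|\varphi\|^2_{\dot{H}^{1/2}(\partial B_1)}$; this gives the bound $\omega(\|\varphi\|_{C^{2,\gamma}})\|\varphi\|^2_{H^{1/2}(\partial B_1)}$ for a suitable modulus of continuity $\omega$. The main technical obstacle lies precisely in this remainder estimate: keeping the quadratic factor in the weak $H^{1/2}$ norm (essential for the Selection Principle driving the rest of the paper) rather than in a stronger norm. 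This works because the harmonic-extension energy is exactly a Dirichlet seminorm of order $1/2$ on $\partial B_1$, so quadratic forms in $w$ naturally sit at the $H^{1/2}$ level and all sub-leading factors can be traded for $C^{2,\gamma}$ smallness of $\varphi$.
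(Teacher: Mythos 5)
Your route is genuinely different from the paper's. The paper constructs a one\hyp{}parameter \emph{volume-preserving} flow $\Phi_t$ generated by the autonomous, near-$\partial B_1$ divergence-free vector field $X_\varphi$ of Lemma \ref{lm:vectorfield}, sets $c_*(t):=\Capa_*(\Phi_t(B_1))$, and expands via $c_*(1)=c_*(0)+c_*'(0)+\tfrac12 c_*''(0)+\int_0^1(1-t)\bigl(c_*''(t)-c_*''(0)\bigr)\,dt$; here $c_*'(0)=0$ automatically because the flow preserves volume (the flux $\int_{\partial B_1}X_\varphi\cdot\nu$ vanishes), and the remainder is controlled by redoing the computations of \cite[Lemma A.2]{fkstab}. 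You instead fix a single diffeomorphism $\Phi_\varphi$, pull the Dirichlet energy back to the fixed annulus $B_R\setminus B_1$, and expand the quadratic form with variable coefficient $M_\varphi$ directly, eliminating the Hadamard linear term explicitly via the volume identity $\int_{\partial B_1}\varphi=-\tfrac{N-1}{2}\int_{\partial B_1}\varphi^2+O(\|\varphi\|_{L^3}^3)$. This is closer in spirit to Fuglede's original calculation for perimeter; it trades the construction of the volume-preserving flow for a book-keeping of the linear term. Both routes compute $\Capa_*(\Omega)$ exactly, so both yield the required \emph{lower} bound.

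There is, however, an internal inconsistency in your sketch that must be resolved before the argument is complete. You set $\tilde u=u\circ\Phi_\varphi$ and $w=\tilde u-v$ and correctly note that $w$ has zero trace on both $\partial B_1$ and $\partial B_R$; but two lines later you assert that ``the boundary datum of $w$ on $\partial B_1$ is $-v'(1)\varphi+O(\varphi^2)$'' in order to conclude $w\approx -v'(1)H_R(\varphi)$. That boundary value is the value of $1-v$ on $\partial\Omega$ (equivalently, of $(v\circ\Phi_\varphi)-v$ on $\partial B_1$), not the value of the $w$ you defined, which vanishes there. To fix this you can either (i) stay on the moving domain and set $u=v+w$ with $w$ harmonic on $B_R\setminus\Omega$ and $w|_{\partial\Omega}=1-v|_{\partial\Omega}\approx-v'(1)\varphi$, then show this $w$ is $H^{1}$-close to $-v'(1)H_R(\varphi)$ (which lives on the unperturbed annulus) with error controlled by $\|\varphi\|_{C^{2,\gamma}}\|\varphi\|_{H^{1/2}}$; or (ii) stay on the fixed annulus, derive the linearized equation for the first-order piece of $w$ (a PDE with zero boundary data and source $-\div(M_1\nabla v)$, $M_1:=\partial_\varphi M_\varphi|_0$), and show by integration by parts, using the radial structure of $\Phi_\varphi$ and of $v$, that the resulting quadratic form collapses to $[v'(1)]^2\int|\nabla H_R(\varphi)|^2$ plus the mean-curvature boundary term. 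In either case the remainder estimate---bounding the cubic-and-higher terms by $\omega(\|\varphi\|_{C^{2,\gamma}})\|\varphi\|^2_{H^{1/2}}$ rather than by a higher norm of $\varphi$---is the real technical content of the lemma and is precisely the part the paper delegates to the detailed computations of \cite[Lemma A.2]{fkstab}; your sketch identifies the right mechanism (Schauder for $H_R$ plus the equivalence $\int|\nabla H_R(\varphi)|^2\simeq\|\varphi\|^2_{\dot H^{1/2}}$) but does not carry it out.
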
		
		
		To prove it, let us  first introduce a technical lemma.
		
		\begin{lemma}\label{lm:vectorfield}
			Given $\gamma\in(0,1]$ there exists $\delta=\delta(N,\gamma)>0$
			and a modulus of continuity $\omega$ such that for every nearly spherical set $\Omega$
			parametrized by $\varphi$ with $\Vert\varphi\Vert_{C^{2,\gamma}(\partial B_1)}<\delta$
			and $\vert\Omega\vert=\vert B_1\vert$, we can find an autonomous vector field $X_\varphi$
			for which the following holds true:
			\begin{enumerate}[label=\textup{(\roman*)}]
				\item $\div{X_\varphi}=0$ in a $\delta$-neighborhood of $\partial B_1$;
				\item $X_\varphi=0$ outside  a  $2\delta$-neighborhood of $\partial B_1$;
				\item if $\Phi_t:=\Phi(t,x)$ is the flow of $X_\varphi$, i.e.
					\begin{equation*}
					  \partial_t\Phi_t=X_\varphi(\Phi_t),\qquad \Phi_0(x)=x,
					\end{equation*}
					then $\Phi_1(\partial B_1)=\partial\Omega$ and $\vert\Phi_t(B_1)\vert=\vert B_1\vert$ for all $t\in[0,1]$;
				\item 
					\begin{itemize}
						\item $\Vert\Phi_t-Id\Vert_{C^{2,\gamma}}\leq\omega(\Vert\varphi\Vert_{C^{2,\gamma}(\partial B_1)})$ for every $t\in[0,1]$,
						\medskip
						
						\item $\Vert\varphi-(X_\varphi\cdot\nu_{B_1})\Vert_{H^{\frac{1}{2}}(\partial B_1)}\leq\omega(\Vert\varphi\Vert_{L^\infty(\partial B_1)})\Vert\varphi\Vert_{H^{\frac{1}{2}}(\partial B_1)}$,
						
						\medskip
						
						\item $(X\cdot x)\circ\Phi_t-X\cdot\nu_{B_1}=(X\cdot\nu_{B_1})\psi_t$, $x\in\partial B_1$, 
						
						where $\Vert\psi_t\Vert_{C^{2,\gamma}(\partial B_1)}\leq\omega(\Vert\varphi\Vert_{C^{2,\gamma}(\partial B_1)})$.
					\end{itemize}
			\end{enumerate}
			
		\end{lemma}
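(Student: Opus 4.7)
The plan is to construct $X_\varphi$ explicitly as an autonomous radial vector field whose radial profile is tuned so that its flow carries $\partial B_1$ to $\partial\Omega$ in unit time, combined with a radial cutoff to localize it near $\partial B_1$. Writing $x = r\sigma$ with $r=|x|$, $\sigma\in\partial B_1$, I would take
\[
  X_\varphi(x) := \frac{c(\sigma)}{r^{N-1}}\,\chi(r)\,\sigma,\qquad c(\sigma) := \frac{(1+\varphi(\sigma))^N-1}{N},
\]
where $\chi\in C_c^\infty(\mathds R)$ satisfies $\chi\equiv 1$ on $[1-\delta,1+\delta]$ and $\chi$ is supported in $(1-2\delta,1+2\delta)$. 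Property (ii) is built in. For (i), the spherical-coordinate divergence formula $\div(h\sigma)=r^{1-N}\partial_r(r^{N-1}h)$ applied to $h=c(\sigma)/r^{N-1}$ in the region $\{\chi\equiv 1\}$ yields $\div X_\varphi = 0$ because the factor $r^{N-1}$ cancels by design.

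For (iii), the radial structure reduces the flow to the separable ODE $\dot r = \chi(r)\,c(\sigma)/r^{N-1}$ along each ray. Since $|c(\sigma)|\lesssim \|\varphi\|_{L^\infty}$, the smallness assumption $\|\varphi\|_{L^\infty}\ll \delta$ guarantees that the trajectory starting from $r_0=1$ never leaves $[1-\delta,1+\delta]$ for $t\in[0,1]$; on this set $\chi\equiv 1$ and the ODE integrates explicitly to $r(t;1,\sigma)^N = 1 + t\bigl((1+\varphi(\sigma))^N - 1\bigr)$, giving $r(1;1,\sigma) = 1+\varphi(\sigma)$ and thus $\Phi_1(\partial B_1) = \partial\Omega$. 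For volume preservation, the fact that $\Phi_t$ is a radial bijection yields $|\Phi_t(B_1)| = \int_{\partial B_1} r(t;1,\sigma)^N/N\, d\mathcal H^{N-1}$, and differentiation together with the ODE on $\{\chi\equiv 1\}$ gives $\tfrac{d}{dt}|\Phi_t(B_1)| = \int_{\partial B_1} c(\sigma)\,d\mathcal H^{N-1}$, which vanishes precisely because the assumption $|\Omega|=|B_1|$ is equivalent to $\int_{\partial B_1}(1+\varphi)^N\,d\mathcal H^{N-1} = N|B_1|$.

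The estimates in (iv) all reduce to explicit Taylor expansions together with the smooth dependence of flows on parameters. The $C^{2,\gamma}$ bound on $\Phi_t-\mathrm{id}$ follows from standard ODE theory applied to the $C^{2,\gamma}$-small vector field $X_\varphi$. On $\partial B_1$ one has $X_\varphi\cdot\nu_{B_1} = c(\sigma) = \varphi + \tfrac{N-1}{2}\varphi^2 + O(\varphi^3)$, so $X_\varphi\cdot\nu_{B_1}-\varphi$ is pointwise quadratic in $\varphi$; the second bullet then follows from the algebra-type inequality $\|fg\|_{H^{1/2}(\partial B_1)}\lesssim \|f\|_{L^\infty}\|g\|_{H^{1/2}} + \|g\|_{L^\infty}\|f\|_{H^{1/2}}$. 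For the third bullet, a direct evaluation at $\Phi_t(\sigma)=r(t;1,\sigma)\sigma$ gives $(X_\varphi\cdot x)\circ\Phi_t = c(\sigma)\, r(t;1,\sigma)^{-(N-2)}$, so that $\psi_t(\sigma) = r(t;1,\sigma)^{-(N-2)} - 1$, whose $C^{2,\gamma}$ norm is controlled linearly by $\|\varphi\|_{C^{2,\gamma}}$.

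The main subtlety I anticipate is reconciling (i)--(ii) with the \emph{exact} volume preservation in (iii): the cutoff $\chi$ necessarily destroys $\div X_\varphi=0$ in the transition annuli, so one might worry this spoils $|\Phi_t(B_1)|$. The resolution is the combination of two facts used above: under $\|\varphi\|_{L^\infty}\ll\delta$ the moving boundary $\Phi_t(\partial B_1)$ remains for all $t\in[0,1]$ in the divergence-free region where $\chi\equiv 1$, and the zero-average condition $\int_{\partial B_1} c(\sigma)\,d\mathcal H^{N-1}=0$ forced by $|\Omega|=|B_1|$ eliminates the remaining contribution coming from the $\sigma$-dependence of the radial speed.
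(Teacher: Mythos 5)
Your construction matches exactly the one the paper invokes: the divergence-free radial vector field from the Appendix of \cite{fkstab} multiplied by a radial cutoff, which is precisely the content of the paper's one-line proof. The verifications you supply --- that the moving boundary stays inside the annulus where $\chi\equiv 1$, and that $\int_{\partial B_1}c\,d\mathcal H^{N-1}=0$ is equivalent to $|\Omega|=|B_1|$, so the cutoff does not spoil volume preservation --- are correct and simply fill in the details the paper leaves to the reader.
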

		\begin{proof}
		  Take the same vector field as in Appendix of \cite{fkstab} and multiply it by a cut-off function.
		\end{proof}

\begin{proof}[Proof of Lemma \ref{taylor}]		
		
		Now set $\Omega_t:=\Phi_t(B_1)$ and let $u_t$ be the capacitary potential of $\Omega_t$. We define 
		\begin{equation*}
			c_*(t):=\Capa_*(\Omega_t)=
			\begin{cases}
		  		\int_{B_R\backslash\Omega_t}{\vert\nabla u_t\vert^2 dx}\text{ in the case of relative capacity};\\
		  		\int_{\Omega_t^c}{\vert\nabla u_t\vert^2 dx}\text{ in the case of full capacity}.
		  	\end{cases}
		\end{equation*}
		
		It is easy to see that $t\mapsto u_t$ is differentiable, see~\cite{Dambrine02},  and that its derivative $\dot{u_t}$ satisfies
		\begin{enumerate}[label=(\Alph*)]
		\item
		\begin{equation*}
			\begin{cases}
				\Delta \dot{u_t}=0\text{ in }B_R\backslash \Omega_t \\
				\dot{u_t}=-\nabla u_t \cdot X_\varphi \text{ on }\partial \Omega_t\\ 
				\dot{u_t}=0\text{ on }\partial B_R 
			\end{cases}
		\end{equation*}	
		\item
		\begin{equation*}
			\begin{cases}
				\Delta \dot{u_t}=0\text{ in }\Omega_t^c \\
				\dot{u_t}=-\nabla u_t \cdot X_\varphi \text{ on }\partial \Omega_t \\
				\dot{u_t}(x)\rightarrow 0\text{ as }x\rightarrow 0
			\end{cases}
		\end{equation*}	
		\end{enumerate}			
		Using Hadamard formula, we compute:
		\begin{equation*}
		  \frac{1}{2}c'_R(t)=\int_{B_R\backslash\Omega_t}{\nabla u_t\cdot\nabla\dot{u_t}}dx
			+\frac{1}{2}\int_{\partial\Omega_t}{\vert\nabla u_t\vert^2X_\varphi\cdot\nu_{\Omega_t}}d\mathcal{H}^{N-1},
		\end{equation*}	
		where $\nu_{\Omega_t}$ is the \emph{inward} normal to $\partial\Omega_t$.
		Now we recall that $u_t$ is harmonic in $B_R\backslash\Omega_t$ and we use the boundary conditions for $\dot{u_t}$ to get	
		\begin{equation*}
			\begin{aligned}
				\frac{1}{2}c'_R(t)&=\int_{B_R\backslash\Omega_t}{\div{(\dot{u_t}\nabla u_t)}}dx
				+\frac{1}{2}\int_{\partial\Omega_t}{\vert\nabla u_t\vert^2X_\varphi\cdot\nu_{\Omega_t}}d\mathcal{H}^{N-1}\\
				&=\int_{\partial\Omega_t}{\dot{u_t}\nabla u_t\cdot\nu_{\Omega_t}}d\mathcal{H}^{N-1}
				+\frac{1}{2}\int_{\partial\Omega_t}{\vert\nabla u_t\vert^2X_\varphi\cdot\nu_{\Omega_t}}d\mathcal{H}^{N-1}\\
				&=\int_{\partial\Omega_t}{(-\nabla u_t\cdot X_\varphi)\nabla u_t\cdot\nu_{\Omega_t}}d\mathcal{H}^{N-1}
				+\frac{1}{2}\int_{\partial\Omega_t}{\vert\nabla u_t\vert^2X_\varphi\cdot\nu_{\Omega_t}}d\mathcal{H}^{N-1}.\\
			\end{aligned}	
		\end{equation*}	
		We know that $u_t$ is identically $1$ on $\partial\Omega_t$ and smaller than \(1\) outside, hence (recall that \(\nu_{\partial \Omega_t}\) denotes the ineer normal)
		\begin{equation} \label{grad_u_on_bndry}
			\nabla u_t=\vert\nabla u_t\vert\nu_{\partial\Omega_t} \text{ on }\partial\Omega_t.
		\end{equation}
		Therefore, 
		\begin{equation*}
			\begin{aligned}
				\frac{1}{2}c'_R(t)&=\int_{\partial\Omega_t}{-\vert\nabla u_t\vert^2 X_\varphi\cdot\nu_{\Omega_t}}d\mathcal{H}^{N-1}
				+\frac{1}{2}\int_{\partial\Omega_t}{\vert\nabla u_t\vert^2X_\varphi\cdot\nu_{\Omega_t}}d\mathcal{H}^{N-1}\\
				&=-\frac{1}{2}\int_{\partial\Omega_t}{\vert\nabla u_t\vert^2X_\varphi\cdot\nu_{\Omega_t}}d\mathcal{H}^{N-1}
				=-\frac{1}{2}\int_{B_R\backslash\Omega_t}{\div{(\vert\nabla u_t\vert^2X_\varphi)}}dx.\\
			\end{aligned}	
		\end{equation*}	
		We proceed now with the second derivative, using again Hadamard's formula and recalling that $X$ is autonomous
		and divergence-free in a neighborhood of $\partial B_1$ (hence, on $\partial\Omega_t$).
		\begin{equation*}
			\begin{aligned}
				\frac{1}{2}c''_R(t)&=-\frac{1}{2}\int_{B_R\backslash\Omega_t}\div\Bigl(\frac{\partial}{\partial t}\vert\nabla u_t\vert^2X_\varphi\Bigr)dx
				-\frac{1}{2}\int_{\partial\Omega_t}{\div{(\vert\nabla u_t\vert^2X_\varphi)}(X_\varphi\cdot\nu_{\Omega_t})}d\mathcal{H}^{N-1}\\
				&=-\int_{\partial\Omega_t}{(\nabla u_t\cdot\nabla\dot{u_t})X_\varphi\cdot\nu_{\Omega_t}}d\mathcal{H}^{N-1}
				-\frac{1}{2}\int_{\partial\Omega_t}{(\nabla\vert\nabla u_t\vert^2\cdot X_\varphi)}(X_\varphi\cdot\nu_{\Omega_t})d\mathcal{H}^{N-1}\\
				&=\int_{\partial\Omega_t}{\dot{u_t}\nabla\dot{u_t}\cdot\nu_{\Omega_t}}d\mathcal{H}^{N-1}
				-\int_{\partial\Omega_t}{(X_\varphi\cdot\nu_{\Omega_t})(\nabla^2u_t[\nabla u_t]\cdot X_\varphi)}d\mathcal{H}^{N-1}\\
				&=\int_{B_R\backslash\Omega_t}{\vert\nabla\dot{u_t}}\vert^2dx
				-\int_{\partial\Omega_t}{(X_\varphi\cdot\nu_{\Omega_t})(\nabla^2u_t[\nabla u_t]\cdot X_\varphi)}d\mathcal{H}^{N-1}\\
			\end{aligned}	
		\end{equation*}	
		Note that in the second to last equality we have used \eqref{grad_u_on_bndry} and the boundary condition for \(\dot{u_t}\). Now  since \(u_t\) is constant on \(\partial \Omega_t\),
		we get
		\begin{equation*}
			0=\Delta u_t=\vert\nabla u_t\vert\mathscr{H}_{\partial\Omega_t}+\nabla^2[\nu_{\Omega_t}]\cdot\nu_{\Omega_t} \text{ on }\partial\Omega_t,
		\end{equation*}
		where $\mathscr{H}_{\partial\Omega_t}$ is the mean curvature of $\partial\Omega_t$ with respect to the inward normal to $\partial\Omega_t$.
		Taking this into account and denoting $X^\tau=X_\varphi-(X_\varphi\cdot\nu_{\Omega_t})\nu_{\Omega_t}$ on $\partial\Omega_t$, we get
		\begin{equation*}
			\begin{aligned}
				\frac{1}{2}c''_R(t)&=\int_{B_R\backslash\Omega_t}{\vert\nabla\dot{u_t}}\vert^2dx
				\\
				&\qquad-\int_{\partial\Omega_t}{(X_\varphi\cdot\nu_{\Omega_t})(\nabla^2u_t[\vert\nabla u_t\vert\nu_{\Omega_t}]\cdot ((X_\varphi\cdot\nu_{\Omega_t})\nu+X^\tau))}d\mathcal{H}^{N-1}\\
				&=\int_{B_R\backslash\Omega_t}{\vert\nabla\dot{u_t}}\vert^2dx
				+\int_{\partial\Omega_t}{(X_\varphi\cdot\nu_{\Omega_t})^2\vert\nabla u_t\vert^2\mathscr{H}_{\partial\Omega_t}}d\mathcal{H}^{N-1}
				\\		
 &\qquad-\int_{\partial\Omega_t}{(X_\varphi\cdot\nu_{\Omega_t})(\nabla^2u_t[\nabla u_t]\cdot X^\tau)}d\mathcal{H}^{N-1}.\\
			\end{aligned}	
		\end{equation*}	
		Now we wish to calculate $c''_R(0)$. We use that
		\begin{itemize}
			\item $\mathscr{H}_{\partial B_1}=-(N-1)$;
			\item $X^\tau=0$ on $\partial B_1$;
			\item $u_0=u_{B_1}=\frac{\vert x\vert^{-(N-2)}-R^{-(N-2)}}{1-R^{-(N-2)}}$ in $B_R\backslash B_1$;
			\item $\dot{u_0}=H_R(-X_\varphi\cdot\nabla u_0)$.		
		\end{itemize}

		\begin{equation*}
			\begin{aligned}
				\frac{1}{2}c''_R(0)&=\int_{B_R\backslash B_1}{\vert\nabla H_R(-X_\varphi\cdot\nabla u_0)\vert^2}dx
				-(N-1)\int_{\partial B_1}{(X_\varphi\cdot\nu_{B_1})^2\vert\nabla u_0\vert^2}d\mathcal{H}^{N-1}\\
				&=\frac{(N-2)^2}{1-R^{-(N-2)}}\left(\int_{B_R\backslash B_1}{\vert\nabla H_R(X_\varphi\cdot\nu_{B_1})\vert^2}dx
				-(N-1)\int_{\partial B_1}{(X_\varphi\cdot\nu_{B_1})^2}d\mathcal{H}^{N-1}\right)\\
				\end{aligned}	
		\end{equation*}	

		As for the case of full capacity, the same computations apply with minor changes, obtaining 
		\begin{equation*}
			\frac{1}{2}c''(0)=(N-2)^2\left(\int_{B_1^c}{\vert\nabla H_R(X_\varphi\cdot\nu_{B_1})\vert^2}dx
				-(N-1)\int_{\partial B_1}{(X_\varphi\cdot\nu_{B_1})^2}d\mathcal{H}^{N-1}\right),
		\end{equation*}	
which formally corresponds to sending \(R\to \infty\) in the formula for \(c''_R\). Since balls minimize the capacity we also have that \(c_*'(0)=0\). Writing 
\[
\Capa_*(\Omega)=c_*(1)=c_*(0)+\frac{1}{2}c_*''(0)+\int_0^1(1-t)(c_*''(t)-c_*''(0))dt\,,
\]
one can now exploit Lemma \ref{lm:vectorfield} and perform the very same computations as in \cite[Lemma A.2]{fkstab} to conclude.
\end{proof}
		
\subsection{Inequality for nearly spherical sets}\label{sec:lin}
We now establish a quantitative inequality for nearly spherical sets in the spirit of those established by Fuglede in~\cite{Fuglede89}, compare with    \cite[Section 3]{fkstab}.

\begin{theorem} \label{mainthmnrlysphr}
	There exists $\delta=\delta(N), c=c(N,R)$ ($c=c(N)$ for the capacity in $\mathds{R}^N$) such that if $\Omega$ is a nearly spherical set of class $C^{2,\gamma}$
	parametrized by $\varphi$ with $\Vert\varphi\Vert_{C^{2,\gamma}}\leq\delta,\vert\Omega\vert=\vert B_1\vert$
	(and $x_\Omega=0$ for the case of the capacity in $\mathds{R}^N$), then
	\begin{equation*}
		\Capa_*(\Omega)-\Capa_*(B_1)\geq c\Vert\varphi\Vert^2_{H^\frac{1}{2}(\partial B_1)},
	\end{equation*}
	where 
	\[
	\Vert\varphi\Vert^2_{H^\frac{1}{2}(\partial B_1)}:=\int_{\partial B_1}\varphi^2 d\mathcal{H}^{N-1}+\int_{B^c_1}\vert\nabla H_*(\varphi)\vert^2 dx,
	\]
	where the second integral is intended on \(B_R\setminus B_1\) if \(*=R\).
\end{theorem}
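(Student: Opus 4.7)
The starting point is the Taylor expansion provided by Lemma \ref{taylor}: it suffices to prove that there exists a constant $c=c(N,R)>0$ (resp. $c(N)$) such that
\[
\tfrac12\,\partial^2\Capa_*(B_1)[\varphi,\varphi]\;\ge\;c\,\|\varphi\|^2_{H^{1/2}(\partial B_1)}
\]
for admissible $\varphi$ (volume constraint in both cases; barycenter constraint in the absolute case). Indeed, once this coercivity of the second variation is established, the remainder $\omega(\|\varphi\|_{C^{2,\gamma}})\|\varphi\|^2_{H^{1/2}}$ in Lemma \ref{taylor} can be absorbed by choosing $\delta$ small, yielding the desired inequality with constant $c/2$.

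The natural tool is the spherical harmonic decomposition. Fix an orthonormal basis $\{Y_k^j\}_{k\ge 0,\,1\le j\le d_k}$ of $L^2(\partial B_1)$ made of spherical harmonics of degree $k$, and write $\varphi=\sum_{k,j} a_{k,j}Y_k^j$. Separation of variables gives
\[
H(Y_k^j)(x)=|x|^{-(N-2+k)}Y_k^j(x/|x|),\qquad H_R(Y_k^j)(x)=A_k(|x|)Y_k^j(x/|x|),
\]
where $A_k(r)=\alpha_k r^k+\beta_k r^{-(N-2+k)}$ is the unique combination with $A_k(1)=1$, $A_k(R)=0$. Integrating by parts one gets
\[
\int_{B_1^c}|\nabla H(Y_k^j)|^2 dx=N-2+k,\qquad \int_{B_R\setminus B_1}|\nabla H_R(Y_k^j)|^2 dx=-A_k'(1)=:\gamma_k(R).
\]
A direct computation shows $\gamma_k(R)-(N-1)=\dfrac{(k+N-1)+(k-1)R^{2k+N-2}}{R^{2k+N-2}-1}$, so that $\gamma_k(R)>N-1$ for every $k\ge 1$, while $\gamma_k(R)\to N-2+k$ as $R\to\infty$. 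Hence, denoting $\mu_k=N-2+k$ in the absolute case and $\mu_k=\gamma_k(R)$ in the relative case, one obtains
\[
\tfrac12\partial^2\Capa_*(B_1)[\varphi,\varphi]=\kappa_*\sum_{k,j}\bigl(\mu_k-(N-1)\bigr)a_{k,j}^2,\qquad \|\varphi\|^2_{H^{1/2}}=\sum_{k,j}(1+\mu_k)a_{k,j}^2,
\]
with $\kappa_*=(N-2)^2$ or $\kappa_*=(N-2)^2/(1-R^{-(N-2)})$. Mode by mode, $\mu_k-(N-1)\ge c_0(1+\mu_k)$ for a positive constant $c_0=c_0(N,R)$ as soon as $k\ge 2$ (absolute case) or $k\ge 1$ (relative case), since the ratios $\tfrac{k-1}{N-1+k}$ and $\tfrac{\gamma_k(R)-(N-1)}{1+\gamma_k(R)}$ are bounded away from zero for such $k$.

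The remaining task is to control the degenerate modes. The volume constraint $|\Omega|=|B_1|$ expands as
\[
\int_{\partial B_1}\Bigl[(1+\varphi)^N-1\Bigr]d\mathcal{H}^{N-1}=0\;\Longrightarrow\; a_0=-\tfrac{N-1}{2}\tfrac{1}{|\partial B_1|^{1/2}}\|\varphi\|^2_{L^2(\partial B_1)}+O(\|\varphi\|_{L^\infty}\|\varphi\|^2_{L^2}),
\]
so $a_0^2\lesssim\|\varphi\|^2_{L^\infty}\|\varphi\|^2_{L^2}\lesssim\delta^2\|\varphi\|^2_{H^{1/2}}$. In the absolute case the barycenter condition $x_\Omega=0$ gives, via a similar Taylor expansion of $\int_\Omega x\,dx$, that $a_{1,j}^2\lesssim\delta^2\|\varphi\|^2_{H^{1/2}}$ for every $j$. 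Plugging these bounds in and choosing $\delta$ small enough to absorb the contribution of the bad modes (which is of order $\delta^2\|\varphi\|^2_{H^{1/2}}$), we conclude
\[
\tfrac12\partial^2\Capa_*(B_1)[\varphi,\varphi]\;\ge\;\tfrac{c_0\kappa_*}{2}\,\|\varphi\|^2_{H^{1/2}(\partial B_1)},
\]
and combining with Lemma \ref{taylor} (again possibly shrinking $\delta$) gives the theorem.

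\textbf{Main obstacle.} The conceptual part, namely diagonalizing the quadratic form on spherical harmonics, is a routine separation of variables. The delicate point is the quantitative treatment of the bad modes: one must show that the volume constraint (and the barycenter constraint in the absolute case) force the coefficients $a_0$ and $a_{1,j}$ to be of size quadratic in $\|\varphi\|$, with an explicit gain that is compatible with the $H^{1/2}$ norm appearing on the right-hand side. In the relative case one also has to keep track of the dependence on $R$ (which deteriorates as $R\to\infty$, consistently with the statement that $c$ depends on $R$), and make sure that the spectral gap $\mu_k-(N-1)$ does not degenerate as $R\to\infty$ for $k\ge 2$ — which is exactly what the explicit formula for $\gamma_k(R)$ shows.
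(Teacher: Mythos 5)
Your proposal is correct and takes essentially the same route as the paper: combine Lemma~\ref{taylor} with a spherical-harmonic diagonalization of the Dirichlet-to-Neumann operator, use the volume (resp.\ barycenter) constraint to show that the degree-$0$ (resp.\ degree-$1$) coefficients are of order $\delta\|\varphi\|$, and then absorb the degenerate modes and the Taylor remainder by shrinking $\delta$. The only presentational difference is that the paper first passes to the quantitative subspaces $\mathcal M^*_\delta$ and then projects onto $\mathcal M^*_0$, estimating the projection error in $H^{1/2}$, whereas you bound the offending coefficients $a_0$ and $a_{1,j}$ directly — the two arguments are interchangeable.
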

\begin{rem}
	Note that by Lemma \ref{propasym} \ref{compasym},\ref{asymnrlysphrsets} this theorem gives us Theorem \ref{mainthmfrnkl} for nearly spherical sets.
\end{rem}
\begin{proof}
	 We essentially repeat the proof of the Theorem 3.3 in \cite{fkstab}. First, we show that $\int_{\partial B_1}{\varphi}$ is small. Indeed, we know that
	\begin{equation*}
	\begin{aligned}
		\vert B_1\vert=\vert\Omega\vert&=\int_{\partial B_1}{\frac{\left(1+\varphi(x)\right)^N}{N}}d\mathcal{H}^{N-1}\\
		&=\vert B_1\vert+\int_{\partial B_1}{\varphi(x)}d\mathcal{H}^{N-1}+\int_{\partial B_1}{\sum_{i=2}^N{N\choose i}\frac{\varphi(x)^i}{N}}d\mathcal{H}^{N-1}.
	\end{aligned}	
	\end{equation*}
	Hence,
	\begin{equation*}
	\begin{split}
		\left\vert\int_{\partial B_1}{\varphi(x)}d\mathcal{H}^{N-1}\right\vert&=\left\vert\int_{\partial B_1}{\sum_{i=2}^N{N\choose i}\frac{\varphi(x)^i}{N}}d\mathcal{H}^{N-1}\right\vert
		\\
		&\leq C(N)\int_{\partial B_1}{\varphi(x)^2}d\mathcal{H}^{N-1}\leq C(N)\delta\|\varphi\|_{L^2}.
		\end{split}
	\end{equation*}

	Moreover, for the case of the absolute capacity, also $\int_{\partial B_1}{x_i\varphi}$ is small.
	Indeed, using that the barycenter of $\Omega$ is at the origin, we get 
	\begin{equation*}
		\left\vert\int_{\partial B_1}{x_i\varphi(x)}d\mathcal{H}^{N-1}\right\vert\leq\int_{\partial B_1}{\sum_{i=2}^N{N\choose i}\left\vert\frac{\varphi(x)^i}{N+1}\right\vert} d\mathcal{H}^{N-1}\leq C(N)\delta\|\varphi\|_{L^2}.
	\end{equation*}

Let us define
	\begin{enumerate}[label=(\Alph*)]
	\item
	$$\mathcal{M}^R_\delta:=\{\xi\in H^\frac{1}{2}(\partial B_1):
	\left\vert\int_{\partial B_1}\xi d\mathcal{H}^{N-1}\right\vert\leq\delta\|\xi\|_{H^{1/2}}\};$$ 
	\item
	$$\mathcal{M}_\delta:=\Bigl\{\xi\in H^\frac{1}{2}(\partial B_1):
	\left\vert\int_{\partial B_1}\xi d\mathcal{H}^{N-1}\right\vert+\left\vert\int_{\partial B_1}x\xi d\mathcal{H}^{N-1}\right\vert\leq\delta\|\xi\|_{H^{1/2}}\Bigr\},$$ 
	\end{enumerate}	 
	 and note that, since \(\|\xi\|_{L^2}\le \|\xi\|_{H^{1/2}}\), we have just proved that $\varphi$ belongs to $\mathcal{M}^*_{C\delta}$.	
	
	By Lemma \ref{taylor}, for $\delta$ small enough we have
	\begin{equation}
		\Capa_*(\Omega)-\Capa_*(B_1)\geq\frac{1}{2}\partial^2 \Capa_*(B_1)[\varphi,\varphi]
		-\omega(\Vert\varphi\Vert_{C^{2,\gamma}})\Vert\varphi\Vert^2_{H^\frac{1}{2}(\partial B_1)}.
	\end{equation}
So, it is enough to check that 
	\begin{equation*}
		\partial^2 \Capa_*(B_1)[\xi,\xi]\geq c\Vert\xi\Vert^2_{H^\frac{1}{2}(\partial B_1)}, \text{ for every $\xi\in\mathcal{M}^*_{\delta}$}
	\end{equation*}
	for small $\delta$.
	
	\emph{Step 1: linearized problem.} First, we show that
	\begin{equation*}
		\partial^2 \Capa_*(B_1)[\xi,\xi]\geq c\Vert\xi\Vert^2_{H^\frac{1}{2}(\partial B_1)}, \text{ for every $\xi\in\mathcal{M}^*_0$}.
	\end{equation*}
	
	Note that 
	\begin{enumerate}[label=(\Alph*)]
	\item $\mathcal{M}^R_0=\{\xi\in H^\frac{1}{2}(\partial B_1):\int_{\partial B_1}\xi d\mathcal{H}^{N-1}=0\}$;
	\item $\mathcal{M}_0=\Bigr\{\xi\in H^\frac{1}{2}(\partial B_1):\int_{\partial B_1}\xi d\mathcal{H}^{N-1}=\int_{\partial B_1}x_i\xi d\mathcal{H}^{N-1}=0, i = 1,2,\ldots,N\Bigl\}$.
	\end{enumerate}
	
	We recall that
			\begin{enumerate}[label=(\Alph*)] 
			\item
			\begin{equation*}
				\partial^2 \Capa_R(B_1)[\varphi,\varphi]:=2\frac{(N-2)^2}{1-R^{-(N-2)}}\left(\int_{B_R\backslash B_1}{\vert\nabla H_R(\varphi)\vert^2}dx
				-(N-1)\int_{\partial B_1}{\varphi^2}d\mathcal{H}^{N-1}\right);
			\end{equation*}
			\item
			\begin{equation*}
				\partial^2 \Capa(B_1)[\varphi,\varphi]:=2(N-2)^2\left(\int_{B_1^c}{\vert\nabla H(\varphi)\vert^2}dx
				-(N-1)\int_{\partial B_1}{\varphi^2}d\mathcal{H}^{N-1}\right).
			\end{equation*}
			\end{enumerate}
				
	We consider first the case of relative capacity. We need to estimate the quotient $$\frac{\int_{B_R\backslash B_1}{\vert\nabla H_R(\xi)\vert^2}dx}{\int_{\partial B_1}{\xi^2}d\mathcal{H}^{N-1}}$$ from below
	for $\xi\in\mathcal{M}_0\backslash\{0\}$.	We note that it is the Rayleigh quotient for the operator $\xi\mapsto\nabla H_R(\xi)\cdot\nu$.
	Thus, we need to calculate its eigenvalues. We use spherical functions as a basis of $L^2(\partial B_1)$: $\xi=\sum_{m,n}{a_{m,n}Y_{m,n}}$.
	We now show that $H(Y_{m,n})$ can be written as $R_{m,n}(r)Y_{m,n}(\omega)$ for a suitable function \(R_{m,n}(r)\).
	Indeed, by the equation defining $H(Y_{m.n})$ we have check that 		
	\begin{equation*}
		\begin{cases}
			\Delta (R_{m,n}(r)Y_{m,n}(\omega))=0\text{ in }B_R\backslash B_1 \\
			R_{m,n}(1)Y_{m,n}=Y_{m,n} \\
			R_{m,n}(R)Y_{m,n}=0
		\end{cases}
	\end{equation*}
	Since $\tilde{\Delta}Y_{m,n}=-m(m+N-2)Y_{m,n}$, where $\tilde{\Delta}$ is the Laplace-Beltrami operator, one easily checks that 
	\begin{equation*}
		R_{m,n}(r)=-\frac{1}{R^{2m+N-2}-1}r^m+\Bigl(1+\frac{1}{R^{2m+N-2}-1}\Bigr)r^{-(N+m-2)}
	\end{equation*}
	provides  a solution. Hence, the first eigenvalue is zero and corresponds to constants, whereas the first non-zero one is $-R_{1,n}'(1)=(N-1)+\frac{1}{R^N-1}N$.
	
	For the case of the absolute  capacity we estimate the quotient $$\frac{\int_{B_1^c}{\vert\nabla H(\xi)\vert^2}dx}{\int_{\partial B_1}{\xi^2}d\mathcal{H}^{N-1}}$$ in an analogous way. The functions $R_{m,n}$ in this case
is 
\[
R_{m,n}(r)=r^{-(N+m-2)}.
\]
	The first eigenvalue is zero and corresponds to constants, the second one is $N-1$ and corresponds to the coordinate functions, the next one is $N$.
	
	\medskip
	
	\emph{Step 2: reducing to $\mathcal{M}^*_0$.} We are going to apply Step 1 to the projection $\xi_0$ of $\xi$ on $\mathcal{M}^*_0$ and show that 
	the difference $\vert\partial^2 \Capa_*(B_1)[\xi,\xi]-\partial^2 \Capa_*(B_1)[\xi_0,\xi_0]\vert$ is small.	Let $\xi$ be in $\mathcal{M}^*_\delta$. Define
	\begin{enumerate}[label=(\Alph*)]
	\item $$\xi_0:=\xi-\frac{1}{N\vert B_1\vert}\int_{\partial B_1}\xi d\mathcal{H}^{N-1};$$
	\item $$\xi_0:=\xi-\frac{1}{N\vert B_1\vert}\int_{\partial B_1}\xi d\mathcal{H}^{N-1}-\frac{1}{\vert B_1\vert}\sum_{i=1}^Nx_i\int_{\partial B_1}y_i\xi d\mathcal{H}^{N-1}.$$

	\end{enumerate}
	It is immediate from the definition that $\xi_0$ belongs to $\mathcal{M}^*_0$. We now  compare the norms of $\xi$ and $\xi_0$. We denote $c:=\xi-\xi_0$ and we write
	\begin{equation}\label{e:normxi}
	\begin{aligned}
		\Vert\xi_0\Vert^2_{H^\frac{1}{2}(\partial B_1)}&=\int_{\partial B_1}{(\xi-c)^2}d\mathcal{H}^{N-1}+
		\int_{B_R\backslash B_1}{\vert\nabla H(\xi-c)\vert^2}dx\\
		&=\Vert\xi\Vert^2_{H^\frac{1}{2}(\partial B_1)}-\Vert c\Vert^2_{H^\frac{1}{2}(\partial B_1)}
		-2\int_{\partial B_1}c\, (\xi-c)d\mathcal{H}^{N-1}\\
		&\quad-2\int_{B_R\backslash B_1}{\nabla H(c)\cdot(\nabla H(\xi)-\nabla H(c))}dx\\
		&=\Vert\xi\Vert^2_{H^\frac{1}{2}(\partial B_1)}-\Vert c\Vert^2_{H^\frac{1}{2}(\partial B_1)}.
	\end{aligned}	
	\end{equation}		
	Note that in the last equality we used integration by parts and the definition of $H$. Since $\xi$ belongs to $\mathcal{M}^*_{C\delta}$, we have
	\begin{equation}\label{e:normc}
	\Vert c\Vert^2_{H^\frac{1}{2}(\partial B_1)}\le C\Vert c\Vert^2_{L^2(\partial B_1)}\le C\delta^2 \|\xi\|^2	
	\end{equation}
	where we have used that since \(c=\xi-\xi_0\) belongs to an \(N+1\) dimensional space, the \(H^{1/2}\) and the \(L^2\) are equivalent. Now we apply Step 1 to $\xi_0$ to get
	\begin{equation*}
	\begin{aligned}
		\partial^2 \Capa_*(B_1)[\xi,\xi]&=\partial^2 \Capa_*(B_1)[\xi_0,\xi_0]+2\partial^2 \Capa_*(B_1)[\xi,c]-\partial^2 \Capa_*(B_1)[c,c]\\
		&\geq c\Vert\xi_0\Vert^2_{H^\frac{1}{2}(\partial B_1)}-2\Vert c\Vert_{H^\frac{1}{2}(\partial B_1)}\Vert \xi\Vert_{H^\frac{1}{2}(\partial B_1)}-\Vert c\Vert^2_{H^\frac{1}{2}(\partial B_1)}
	\end{aligned}	
	\end{equation*}
	and thus , by \eqref{e:normxi} and \eqref{e:normc},
	$$\partial^2 \Capa_*(B_1)[\xi,\xi]\geq c\Vert\xi_0\Vert^2_{H^\frac{1}{2}(\partial B_1)}-C\delta \|\xi\|^2\ge \frac{c}{2}\|\xi\|^2_{H^\frac{1}{2}(\partial B_1)}$$			
provided \(\delta\) is chosen sufficiently small.

\end{proof}

	\section{Stability  for bounded sets with small asymmetry}\label{penpb}
	
	This section is dedicated to the proof of the following theorem.	
	
	\begin{theorem} \label{mainthmbdd}
		There exist constants $c=c(N,R)$, $\epsilon_0=\epsilon_0(N,R)$ such that
		for any open set $\Omega\subset B_R$ with $\vert\Omega\vert=\vert B_1\vert$ and $\alpha_*(\Omega)\leq\epsilon_0$
		the following inequality holds:
		\begin{equation*}
			\Capa_*(\Omega)-\Capa_*(B_1)\geq c\alpha_*(\Omega).
		\end{equation*}
	\end{theorem}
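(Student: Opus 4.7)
The plan is to apply a Selection Principle in the style of Cicalese-Leonardi \cite{CL} and De Philippis-Brasco-Velichkov \cite{fkstab}, arguing by contradiction. If the theorem fails, there exist $\omega_k \to 0$ and open sets $\Omega_k \subset B_R$ with $|\Omega_k| = |B_1|$ and $\varepsilon_k := \alpha_*(\Omega_k) \to 0$ such that
\[
\Capa_*(\Omega_k) - \Capa_*(B_1) \leq \omega_k \,\varepsilon_k.
\]
By the isocapacitary inequality and continuity of $\alpha_*$ under $L^1$-convergence (Lemma \ref{propasym}\ref{asymlip}), after passing to a subsequence and translating in case (B), we may assume $\mathbf{1}_{\Omega_k}\to\mathbf{1}_{B_1}$ in $L^1(B_R)$.

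The idea is to replace $\Omega_k$ by an improved, more regular, sequence $U_k$ defined as minimizers over open $\Omega\subset B_R$ of a penalized functional of the form
\[
F_k(\Omega) := \Capa_*(\Omega) + \Lambda \bigl| |\Omega| - |B_1| \bigr| + \sqrt{\varepsilon_k^2 + (\alpha_*(\Omega) - \varepsilon_k)^2},
\]
for a sufficiently large constant $\Lambda$ independent of $k$. Existence of $U_k$ will be established in Section \ref{s:ex}. The smooth penalty $\sqrt{\varepsilon_k^2 + (\cdot - \varepsilon_k)^2}$ is concave near $\alpha_*=\varepsilon_k$ and equals $\varepsilon_k$ there, so it will force $\alpha_*(U_k) \approx \varepsilon_k$; the term $\Lambda\bigl||\Omega|-|B_1|\bigr|$ will enforce, for $\Lambda$ large enough, that $|U_k|=|B_1|$. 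Combining $F_k(U_k) \leq F_k(\Omega_k)$ with the isocapacitary inequality and expanding the square root should yield
\[
|U_k|=|B_1|, \qquad |\alpha_*(U_k)-\varepsilon_k| \leq C\sqrt{\omega_k}\,\varepsilon_k, \qquad \Capa_*(U_k) - \Capa_*(B_1) \leq C\,\omega_k\,\varepsilon_k,
\]
together with $\mathbf{1}_{U_k}\to\mathbf{1}_{B_1}$ in $L^1(B_R)$ (up to a fixed translation in case (B)) by continuity of $\alpha_*$ and compactness.

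The crucial feature is that $U_k$ is now a quasi-minimizer for $\Capa_*$ under a volume constraint with a Lipschitz perturbation of size $O(\sqrt{\omega_k})$; the Alt-Caffarelli regularity theory \cite{AC}, applied in Section \ref{reg}, will then ensure that $U_k$ is a nearly spherical set of class $C^{2,\gamma}$ parametrized by some $\varphi_k$ with $\|\varphi_k\|_{C^{2,\gamma}(\partial B_1)}\to 0$. Theorem \ref{mainthmnrlysphr} applied to $U_k$ (with barycenter at the origin in case (B)), combined with Lemma \ref{propasym}\ref{asymnrlysphrsets}, then gives
\[
\Capa_*(U_k) - \Capa_*(B_1) \geq c\,\|\varphi_k\|^2_{H^{1/2}(\partial B_1)} \geq c'\alpha_*(U_k) \geq c''\varepsilon_k,
\]
contradicting the upper bound $\Capa_*(U_k)-\Capa_*(B_1)\leq C\omega_k\varepsilon_k$ for $k$ large, since $\omega_k\to 0$.

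The main obstacle is the construction and regularity of $U_k$. The class of competitors is not closed under $L^1$ convergence unless capacitary-level-set techniques are invoked, so existence of a minimizer of $F_k$ is not immediate; once $U_k$ is in hand, verifying the three displayed quantitative properties requires delicate estimates on the penalty term, and applying Alt-Caffarelli regularity to the capacitary free-boundary problem perturbed by the Lipschitz functional $\Lambda\big||\Omega|-|B_1|\big|+\sqrt{\varepsilon_k^2+(\alpha_*(\Omega)-\varepsilon_k)^2}$ demands treating the perturbation as a lower-order term in the Euler-Lagrange equation on the free boundary. These are precisely the issues handled in Sections \ref{s:ex} and \ref{reg}.
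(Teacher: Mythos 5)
Your overall strategy is the paper's own: argue by contradiction, construct an improved contradicting sequence of minimizers of a penalized problem, obtain smooth convergence to $B_1$ via Alt--Caffarelli regularity, and then apply Theorem \ref{mainthmnrlysphr} to the resulting nearly spherical sets. However, the specific penalized functional you propose differs from the paper's $\mathscr{C}^*_{\hat\eta,j}$ in two ways that are not cosmetic, and both break exactly the Alt--Caffarelli machinery you intend to invoke.

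First, your volume penalty $\Lambda\bigl||\Omega|-|B_1|\bigr|$ is \emph{not monotone}, whereas the paper deliberately uses the strictly decreasing function $f_{\hat\eta}$, flagging explicitly that monotonicity is crucial for the free boundary regularity. The reason is the second inequality in Lemma \ref{varcap}: when you enlarge $\Omega$ (the competitor $w$ satisfies $u\leq w\leq 1$, so $\{u=1\}\subset\{w=1\}$), one needs
\[
\cp{A}{u}-\cp{A}{w}\;\leq\;-\hat\eta\,|\tilde\Omega\setminus\Omega|+C\sigma\,|\tilde\Omega\setminus\Omega|,
\]
which rests on $f_{\hat\eta}$ \emph{decreasing} by at least $\hat\eta$ per unit of added volume. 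With your penalty, since $\Lambda$ is large the minimizer pins $|U_k|=|B_1|$, and any outward perturbation changes the penalty by $+\Lambda\cdot(\text{added volume})$, not $-\hat\eta\cdot(\dots)$. The right-hand side becomes $(\Lambda+C)|\tilde\Omega\setminus\Omega|$, which has the wrong sign: the one-sided estimate underlying the nondegeneracy of $v_j$, the linear growth away from the free boundary, and the density estimates (the lemma ``if $\fint_{\partial B_r}v_j\leq cr$ then $v_j=0$ in $B_{\kappa r}$'') are all lost. Equivalently, at $|\Omega|=\omega_N$ the free-boundary condition coming from your penalty reads $|\nabla v|^2\in[-\Lambda,\Lambda]$ up to a sign, which is not the one-phase Bernoulli structure required by \cite{AC} and \cite{AguileraAltCaffarelli86}.

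Second, you dropped the small coefficient $\sigma$ from the asymmetry penalty. The paper uses $\sqrt{\epsilon_j^2+\sigma^2(\alpha_*(\Omega)-\epsilon_j)^2}$, so that the Lipschitz constant of this term with respect to the symmetric difference $|\Omega_1\Delta\Omega_2|$ is $C(R)\sigma$, and $\sigma$ is then chosen so small that $C(R)\sigma<\hat\eta/2$ (see the proof of Lemma \ref{varcap} and the existence proof). Since $\hat\eta$ is fixed small by Lemma \ref{propofCeta'}/\ref{propofCeta}, there is no room to absorb the full constant $C(R)$ that your penalty $\sqrt{\varepsilon_k^2+(\alpha_*(\Omega)-\varepsilon_k)^2}$ produces; the perturbation term overwhelms the gain from the volume penalty and the comparison estimates again fail. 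Both issues can be repaired precisely by replacing your $F_k$ with the paper's $\mathscr{C}^*_{\hat\eta,j}$, i.e.\ $\Capa_*(\Omega)+f_{\hat\eta}(|\Omega|)+\sqrt{\epsilon_j^2+\sigma^2(\alpha_*(\Omega)-\epsilon_j)^2}$ with $\sigma$ small, after which your reduction to Theorem \ref{mainthmnrlysphr} via the Selection Principle goes through as you describe.
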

	
	We want to reduce our problem to nearly spherical sets. To do that we argue by contradiction. Assume that there exists a sequence of domains $\tilde{\Omega}_j$ such that
	\begin{equation} \label{contrseq}
		\vert\tilde{\Omega}_j\vert=\vert B_1\vert,\text{ }
		\alpha_*(\tilde{\Omega}_j)=\epsilon_j\rightarrow 0,\text{ }
		\Capa_*(\tilde{\Omega}_j)-\Capa_*(B_1)\leq\sigma^4\epsilon_j 	
	\end{equation}
	for some $\sigma$ small enough to be chosen later. We then prove the existence of a new contradicting sequence made of smooth sets via a selection principle.
	\begin{theorem}[Selection Principle]\label{SelPr}
		There exists $\tilde{\sigma}=\tilde{\sigma}(N,R)$ such that 
		if one has a contradicting sequence $\tilde{\Omega}_j$ as 
		the one described above in \eqref{contrseq} with $\sigma<\tilde{\sigma}$, 
	          then there exists a sequence of smooth open sets $U_j$ such that
		\begin{enumerate}[label=\textup{(\roman*)}]
			\item $\vert U_j\vert=\vert B_1\vert$,
			\item $\partial U_j\rightarrow \partial B_1$ in $C^k$ for every $k$,
			\item $\limsup_{j\rightarrow\infty}\frac{\Capa_*(U_j)-\Capa_*(B_1)}{\alpha_*(\Omega_j)}\leq C\sigma$ for some $C=C(N,R)$ constant,
			\item for the case of the capacity in $\mathds{R}^N$ the barycenter of every $\Omega_j$ is in the origin.
		\end{enumerate}
	\end{theorem}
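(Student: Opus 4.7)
I would follow the Cicalese--Leonardi selection principle \cite{CL}, in the capacitary adaptation of \cite{fkstab}. The goal is to regularize $\tilde\Omega_j$ by extracting a new sequence $U_j$ as minimizers of a penalized variational problem designed so that (a) $U_j$ inherits the contradicting behavior of $\tilde\Omega_j$ encoded in (iii), and (b) $U_j$ can be analyzed via the Alt--Caffarelli free boundary regularity theory \cite{AC}, which together with an elliptic bootstrap then gives (ii).

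\textbf{Penalized minimization.} For each $j$, and for suitable parameters $\Lambda\gg 1$ and $\eta_j>0$, I would minimize
\[
\mathcal F_j(U) := \Capa_*(U) + \Lambda\,\bigl||U|-|B_1|\bigr| + \eta_j\, f\bigl(\alpha_*(U),\epsilon_j\bigr)
\]
over open sets $U\subset B_{R_0}$, with the additional barycenter constraint $x_U=0$ enforced (possibly by another penalty) in the absolute case. Here $R_0=R$ in the relative case, and $R_0$ is a fixed large radius in the absolute case, thanks to the a priori reduction to bounded sets carried out in Section \ref{redtobdd}. The function $f$ is a smooth version of $|\alpha_*(U)-\epsilon_j|$ tuned so that the competitor $\tilde\Omega_j$ yields $\mathcal F_j(\tilde\Omega_j)\le \Capa_*(B_1)+C\sigma^4\epsilon_j$, while comparison with $B_1$ forces any minimizer to have $\alpha_*(U_j)\gtrsim \epsilon_j$. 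Existence of $U_j$ follows from the direct method: $\Capa_*$ is $L^1$-lower semicontinuous on indicators, $\alpha_*$ is $L^1$-continuous by Lemma \ref{propasym}\ref{asymlip}, and compactness comes from $U_j\subset B_{R_0}$; choosing $\Lambda$ large forces $|U_j|=|B_1|$, giving (i) and (iv). The details are precisely what Sections \ref{penpb} and \ref{s:ex} work out.

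\textbf{Consequences of minimality and regularity.} Comparing $\mathcal F_j(U_j)\le \mathcal F_j(\tilde\Omega_j)$ and $\mathcal F_j(U_j)\le \mathcal F_j(B_1)$ yields simultaneously $\Capa_*(U_j)-\Capa_*(B_1)\le C\sigma\,\alpha_*(U_j)$ (which is (iii)) and, by Theorem \ref{t:fmp} of Fusco--Maggi--Pratelli applied to $U_j$, that $\alpha_*(U_j)\to 0$, hence $|U_j\Delta B_1|\to 0$. Moreover, since $\alpha_*$ is $L^1$-Lipschitz, the penalty acts as a volume-type perturbation, so each $U_j$ is a uniform quasi-minimizer of $\Capa_*$; equivalently, its capacitary potential $u_j$ minimizes a one-phase functional of Alt--Caffarelli type $\int |\nabla v|^2 + q_j^2\chi_{\{v<1\}}$ up to a vanishing error. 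The theory of \cite{AC} then gives uniform $C^{1,\alpha}$ regularity of $\partial U_j$ together with $C^{1,\alpha}$-closeness to $\partial B_1$; writing $\partial U_j=\{(1+\varphi_j)x:x\in\partial B_1\}$ and bootstrapping via Schauder estimates applied to the harmonic $u_j$ upgrades this to $\|\varphi_j\|_{C^k}\to 0$ for every $k$, which is (ii).

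\textbf{Main obstacle.} The crux is the regularity step: one must translate minimality of $\mathcal F_j$ into an almost-minimality condition amenable to \cite{AC}, with a modulus of minimality tending to zero as $j\to\infty$. This requires a careful calibration of $\Lambda$ and $\eta_j$ so that both the volume penalty and the $\alpha_*$-penalty get absorbed into the error term of the Alt--Caffarelli functional, and relies crucially on the $L^1$-Lipschitz continuity of $\alpha_*$. Once uniform $C^{1,\alpha}$-regularity and $C^{1,\alpha}$-closeness to $\partial B_1$ are established, the $C^k$-convergence in (ii) is a routine elliptic bootstrap.
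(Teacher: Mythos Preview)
Your overall strategy is correct and matches the paper: penalize, minimize, invoke Alt--Caffarelli. But there is a genuine gap in your existence step. You write that ``compactness comes from $U_j\subset B_{R_0}$'', which is false: open subsets of a fixed ball are not precompact in the $L^1$ topology on their indicators, and $\Capa_*$ is not lower semicontinuous under mere $L^1$ convergence of sets without further structure. The paper resolves this by a nontrivial truncation argument (Step~1 of the existence lemma in Section~\ref{s:ex}): starting from an arbitrary minimizing sequence $V_k$ with potentials $v_k$, it compares $V_k$ with the superlevel sets $\{v_k>1-t_k\}$, uses the minimality inequality together with the coarea formula to find levels $s_k$ at which $P(\{v_k>s_k\})$ is uniformly bounded, and shows these truncated sets still form a minimizing sequence. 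Only then does BV compactness give a limit. This step also explains the first smallness constraint on $\sigma$: one needs $C(R)\sigma<\hat\eta$ so that the $\alpha_*$-penalty is dominated by the volume term in the comparison.

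Two further points where the paper's implementation differs from your sketch. First, the volume penalty is not $\Lambda\bigl||U|-|B_1|\bigr|$ but the \emph{monotone} piecewise-linear $f_{\hat\eta}$; the paper stresses this is needed to make the comparison with enlarged competitors (the second inequality in Lemma~\ref{varcap}) clean enough for the linear-growth/non-degeneracy estimates. With your symmetric penalty the argument can still be made to work, but the constants are less transparent. Second, your bootstrap ``via Schauder estimates applied to the harmonic $u_j$'' is too vague: harmonicity of $u_j$ in the exterior does not by itself upgrade $C^{1,\alpha}$ free boundaries to $C^\infty$. The paper derives the Euler--Lagrange identity (Lemma~\ref{l:EL}) to obtain an \emph{explicit smooth formula} for $q_{u_j}$, and it is the smoothness of $q_{u_j}$ that feeds into the higher-regularity statement of \cite[Theorem~8.1]{AC} to get $C^k$ for every $k$. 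In the relative case there is also a preliminary H\"older-continuity step (Lemma~\ref{holder_continuity}) whose sole purpose is to ensure $\partial\Omega_j$ stays uniformly away from $\partial B_R$, so that the Alt--Caffarelli comparison balls are admissible; you do not mention this, and without it the ``add a ball'' competitor in the non-degeneracy lemma may exit $B_R$. Finally, the barycenter is not handled by a penalty: the paper simply translates and rescales at the end, setting $U_j=\lambda_j(\Omega_j-x_{\Omega_j})$, which is why $|U_j|=|B_1|$ holds exactly even though the minimizers $\Omega_j$ only satisfy $\bigl||\Omega_j|-|B_1|\bigr|\le C\sigma^4\epsilon_j$.
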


	\begin{proof} [Proof of Theorem \ref{mainthmbdd} assuming Selection Principle]
		Suppose Theorem \ref{mainthmbdd} does not hold. Then for any $\sigma>0$ we can find a contradicting
		sequence $\tilde{\Omega}_j$ as in \eqref{contrseq}. We apply Selection Principle to $\tilde{\Omega}_j$
		to get a smooth contradicting sequence $U_j$.
		
		By the properties of $\Omega_j$, we have that for $j$ big enough $U_j$ is a nearly spherical set.
		Thus, we can use Theorem \ref{mainthmnrlysphr} and get
		$$c(N,R)\leq\limsup_{j\rightarrow\infty}\frac{\Capa_*(U_j)-\Capa_*(B_1)}{\alpha_*(\Omega_j)}\leq C(N,R)\sigma.$$
		But this cannot happen for $\sigma$ small enough depending only on \(N\) and \(R\)
	\end{proof}
	
	The proof of Theorem \ref{SelPr} is based on constructing the new sequence of sets by solving a variational problem. The existence of this new sequence is established in the next section while its regularity properties are studied in Section \ref{reg}. 		
	
\section{Proof of Theorem \ref{SelPr}: Existence and first properties}	\label{s:ex}
	
\subsection{Getting rid of the volume constraint}
The first step consists in getting rid of the volume constraint in the isocapacitary inequality. Note that this has to be done locally  since, by scaling,  globally there exists no Lagrange multiplier. Furthermore, to apply the regularity theory for free boundary problems, it is crucial to introduce a \emph{monotone} dependence on the volume. To this end, let us set
	\begin{equation*}
		f_\eta(s):=
		\begin{cases}
			-\frac{1}{\eta}(s-\omega_N), \qquad &s\leq\omega_N\\
			-\eta(s-\omega_N), &s\geq\omega_N
		\end{cases}
	\end{equation*} 
	and let us consider the new functional	
	\begin{equation*}
		\mathscr{C}^*_\eta(\Omega)=\Capa_*(\Omega)+f_\eta(\vert\Omega\vert).
	\end{equation*}
We now show that the above functional is uniquely minimized by balls. Note also that \(f_\eta\) satisfies
\begin{equation}
\label{e:feta}
\eta(t-s)\le f_\eta(s)-f_\eta(t)\le \frac{(t-s)}{\eta}\qquad\text{for all \(0\le s\le t\).}
\end{equation}

\begin{lemma}[Relative capacity] \label{propofCeta'}
		There exists an $\hat{\eta}=\hat{\eta}(R)>0$ such that the only minimizer of 
		$\mathscr{C}^R_{\hat{\eta}}$ in the class of sets contained in $B_{R}$
		is  $B_1$, the unit ball centered at the origin.
		
		Moreover, there exists $c=c(R)>0$ such that for any ball $B_r$ with $0<r<R$, one has
		\begin{equation}\label{e:quantitiveballs1}
			\mathscr{C}^R_{\hat{\eta}}(B_r)-\mathscr{C}^R_{\hat{\eta}}(B_1)\geq c\vert r-1\vert.
		\end{equation}
	\end{lemma}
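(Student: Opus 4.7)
My plan is to reduce the minimization of $\mathscr{C}^R_{\hat{\eta}}$ to a one-variable problem over centered balls by exploiting the rigidity of the relative isocapacitary inequality, and then to choose $\hat{\eta}$ small enough that the one-variable profile has a unique, linearly stable minimum at radius~$1$.

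First, for any open $\Omega \subset B_R$ with $|\Omega| = \omega_N \rho^N$ (so $\rho \in [0,R]$), the rigidity statement for the relative isocapacitary inequality recalled right after \eqref{iso1} gives $\Capa_R(\Omega) \geq \Capa_R(B_\rho)$ with equality only when $\Omega = B_\rho$ up to a capacity-zero set. Hence
\[
\mathscr{C}^R_{\hat{\eta}}(\Omega) \geq \Capa_R(B_\rho) + f_{\hat{\eta}}(\omega_N \rho^N) =: g(\rho),
\]
with equality iff $\Omega$ is the centered ball $B_\rho$. So it suffices to prove $g(\rho) - g(1) \geq c(R)|\rho - 1|$ for all $\rho \in [0,R)$ and some $c(R) > 0$: this implies both the uniqueness of $B_1$ as minimizer of $\mathscr{C}^R_{\hat{\eta}}$ and the estimate \eqref{e:quantitiveballs1} (by specializing to $\Omega = B_r$).

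Next I would compute $\Capa_R(B_\rho)$ explicitly using its radial harmonic potential, obtaining
\[
\Capa_R(B_\rho) = \frac{N(N-2)\,\omega_N}{\rho^{-(N-2)} - R^{-(N-2)}}.
\]
A short calculation shows that $\Capa_R'(B_\rho) = O(\rho^{N-3})$ as $\rho \to 0$ (hence bounded on $[0,1]$) and that it diverges to $+\infty$ as $\rho \to R$. Setting
\[
M(R) := \sup_{\rho \in [0,1]} \Capa_R'(B_\rho), \qquad \mu(R) := \inf_{\rho \in [1,R)} \frac{\Capa_R(B_\rho) - \Capa_R(B_1)}{\rho - 1},
\]
both quantities are finite and strictly positive: the infimum is positive because the difference quotient is continuous on $[1,R)$, extends to $\Capa_R'(B_1) > 0$ at $\rho = 1$, and blows up as $\rho \to R$.

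The quantitative inequality then falls out by splitting $\rho \leq 1$ and $\rho \geq 1$. For $\rho \in [0,1]$, combining $f_{\hat{\eta}}(\omega_N \rho^N) = (\omega_N/\hat{\eta})(1-\rho^N) \geq (\omega_N/\hat{\eta})(1-\rho)$ with $\Capa_R(B_1) - \Capa_R(B_\rho) \leq M(1-\rho)$ yields
\[
g(\rho) - g(1) \geq \Bigl(\tfrac{\omega_N}{\hat{\eta}} - M\Bigr)(1-\rho);
\]
for $\rho \in [1,R)$, the bound $\rho^N - 1 \leq NR^{N-1}(\rho - 1)$ together with the definition of $\mu$ give
\[
g(\rho) - g(1) \geq \bigl(\mu - \hat{\eta}\, \omega_N N R^{N-1}\bigr)(\rho - 1).
\]
Choosing $\hat{\eta} = \hat{\eta}(R) > 0$ small enough (e.g.\ $\hat{\eta} \leq \min\{\omega_N/(2M),\ \mu/(2\omega_N N R^{N-1})\}$) so that both parenthesized coefficients exceed some $c(R) > 0$ concludes. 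The only mildly delicate points are the finiteness of $M$ at $\rho = 0$ (secured by $N \geq 3$) and the strict positivity of $\mu$, both transparent from the explicit formula and the boundary behaviour of $\Capa_R(B_\rho)$; the rest is a direct combination of elementary one-dimensional estimates with the rigidity of the isocapacitary inequality.
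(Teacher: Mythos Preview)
Your proof is correct and follows essentially the same approach as the paper: reduce to centered balls via the rigidity of the relative isocapacitary inequality, then perform a one-variable analysis of $g(r)=\Capa_R(B_r)+f_{\hat\eta}(\omega_Nr^N)$, choosing $\hat\eta$ small so that the volume penalty dominates. The only difference is organizational: the paper splits $[0,1]$ into $[0,\tfrac12]$ and $[\tfrac12,1]$ and argues separately on each piece, whereas you handle $[0,1]$ in one shot by bounding $M(R)=\sup_{[0,1]}\Capa_R'(B_\rho)$ directly (using the $O(\rho^{N-3})$ behaviour at the origin, which is fine since $N\ge3$); on $[1,R)$ both arguments are the same in spirit.
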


	\begin{lemma}[Absolute capacity] \label{propofCeta}
		There exists an $\hat{\eta}=\hat{\eta}(R)>0$ such that the only minimizer of 
		$\mathscr{C}_{\hat{\eta}}$ in the class of sets contained in $B_{R}$
		is  a translate of the unit  unit ball $B_1$.
		
		Moreover, there exists $c=c(R)>0$ such that for any ball $B_r$ with $0<r<R$, one has
		\begin{equation}\label{e:quantitiveballs2}
			\mathscr{C}_{\hat{\eta}}(B_r)-\mathscr{C}_{\hat{\eta}}(B_1)\geq c\vert r-1\vert.
		\end{equation}
	\end{lemma}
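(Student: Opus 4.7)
The plan is to reduce the problem to a one-dimensional analysis via the non-quantitative isocapacitary inequality \eqref{iso1}. For any open $\Omega\subset B_R$, let $s:=(|\Omega|/\omega_N)^{1/N}\in(0,R]$, so that $|B_s|=|\Omega|$. By \eqref{iso1} and the scaling $\Capa(B_s)=s^{N-2}\Capa(B_1)$,
\[
\mathscr{C}_{\hat\eta}(\Omega)\ \ge\ s^{N-2}\Capa(B_1)+f_{\hat\eta}(\omega_N s^N)\ =:\ g(s),
\]
with equality when $\Omega=B_s$. I would therefore reduce both conclusions of the lemma to proving the one-variable estimate
\[
g(s)-g(1)\ \ge\ c(R)\,|s-1|\qquad\text{for every }s\in(0,R],
\]
provided $\hat\eta=\hat\eta(R)$ is chosen sufficiently small. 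Once this is in place, \eqref{e:quantitiveballs2} is just the above evaluated on $\Omega=B_r$ (so $s=r$), and the equality $\mathscr{C}_{\hat\eta}(\Omega)=\mathscr{C}_{\hat\eta}(B_1)$ forces $s=1$ together with equality in \eqref{iso1}; the rigidity of the isocapacitary inequality then implies that $\Omega$ is (up to capacity zero) a ball, necessarily a translate of $B_1$, and conversely every such translate that fits inside $B_R$ attains the minimum by translation invariance of $\Capa$.

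To prove the one-variable bound I would split into $s\le 1$ and $s\ge 1$ and use the factorizations $s^{N-2}-1=(s-1)\sum_{j=0}^{N-3}s^j$ and $s^N-1=(s-1)\sum_{j=0}^{N-1}s^j$. Inserting the explicit form of $f_{\hat\eta}$, this gives, on $(0,1]$,
\[
g(s)-g(1)=(1-s)\Bigl[\tfrac{\omega_N}{\hat\eta}\textstyle\sum_{j=0}^{N-1}s^j \;-\; \Capa(B_1)\sum_{j=0}^{N-3}s^j\Bigr],
\]
and on $[1,R]$,
\[
g(s)-g(1)=(s-1)\Bigl[\Capa(B_1)\textstyle\sum_{j=0}^{N-3}s^j \;-\; \hat\eta\,\omega_N\sum_{j=0}^{N-1}s^j\Bigr].
\]
In the first bracket the positive term dominates provided $\hat\eta$ is smaller than a dimensional constant times $\omega_N/\Capa(B_1)$; in the second, provided $\hat\eta\le c(N)\Capa(B_1)/(\omega_N R^2)$, since the ratio $\sum_0^{N-1}s^j/\sum_0^{N-3}s^j$ is controlled by a multiple of $R^2$ on $[1,R]$. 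Choosing $\hat\eta=\hat\eta(R)$ small enough to meet both conditions yields a uniform positive lower bound for both brackets on their respective ranges, and hence the desired linear estimate in $|s-1|$.

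The only real subtlety — and the reason for the $R$-dependence of $\hat\eta$ — is the trade-off between the two regimes: for $s<1$ the penalty $f_{\hat\eta}$ must \emph{beat} the drop in $\Capa$, which requires $1/\hat\eta$ large, while for $s>1$ it must be \emph{beaten} by the growth of $\Capa$ up to the absolute ceiling $s=R$, which requires $\hat\eta$ small depending on $R$. Apart from this calibration, the argument is essentially elementary, and it differs from the relative-capacity version (Lemma \ref{propofCeta'}) only in the rigidity step: the full isocapacitary inequality yields \emph{translates} of $B_1$ rather than just the centered ball, but since $\mathscr{C}_{\hat\eta}$ itself is translation invariant, every such translate contained in $B_R$ is likewise a minimizer.
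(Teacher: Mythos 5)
Your proof is correct and follows essentially the same strategy as the paper: reduce to balls via symmetrization/isocapacitary inequality and then perform an elementary one-variable analysis of $g(s)=s^{N-2}\Capa(B_1)+f_{\hat\eta}(\omega_N s^N)$, choosing $\hat\eta=\hat\eta(R)$ small so that the penalty term dominates the capacity drop for $s<1$ while the capacity growth dominates the penalty for $1<s<R$. The paper (for the relative-capacity analogue, to which the absolute case is referred) accomplishes the one-variable step by computing $g'$ and splitting $(0,1)$ into $(0,1/2)$ and $(1/2,1)$; your algebraic factorizations of $s^{N-2}-1$ and $s^N-1$ deliver the same conclusion in one stroke and make the linear lower bound $g(s)-g(1)\ge c(R)\vert s-1\vert$ completely explicit, but the underlying idea and the role of $\hat\eta(R)$ are identical.
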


%
	\begin{proof}[Proof of Lemma \ref{propofCeta'}]
		First of all, using symmetrization we get that any minimizer of $\mathscr{C}^R_\eta$
		is a ball centered at zero. Thus, it is enough to show that for some $\eta>0$
		\begin{equation*}
			g(r):=\mathscr{C}^R_\eta(B_r)
		\end{equation*}
		attains its only minimum at $r=1$ on the interval $(0,R)$. We  recall that the (relative) capacitary potential of \(B_r\) in \(B_R\) is given by
		\[
		u_R=\min\Biggl\{\frac{\bigl(|x|^{-(n-2)}-R^{-(n-2)}\bigr)_+}{r^{-(n-2)}-R^{-(n-2)}},1\Biggr\}
		\]
		and thus 
		\[
		\Capa_R(B_r)=\frac{(n-2)}{r^{-(n-2)}-R^{-(n-2)}},
		\]
hence
		\begin{equation*}
			g(r)=\Capa_R(B_r)+f_\eta(\omega_N r^N)=\frac{R^{N-2}-1}{(\frac{R}{r})^{N-2}-1}\Capa_R(B_1)+f_\eta(\omega_N r^N).
		\end{equation*}
		For convenience let us denote  
		\begin{equation*}
			\varphi(r):=\Capa_R(B_r)=c_1(R)\frac{R^{N-2}-1}{R^{N-2}-r^{N-2}}r^{N-2},
		\end{equation*}	
		and note that 
		\begin{equation*}
			\varphi'(r)=c_1(R)(N-2)\left(\frac{R^{N-2}-1}{R^{N-2}-r^{N-2}}r^{N-3}+\frac{R^{N-2}-1}{(R^{N-2}-r^{N-2})^2}r^{N-3}r^{N-2}\right).
		\end{equation*}	
		Now we consider separately the two cases $0<r\leq 1$ and $1\leq r\leq R$.\\
		\begin{itemize}
			\item $0<r\leq 1$
			\begin{equation*}
				g'(r)=c_2(R)\left(\frac{R^{N-2}-1}{R^{N-2}-r^{N-2}}r^{N-3}+\frac{R^{N-2}-1}{(R^{N-2}-r^{N-2})^2}r^{N-3}r^{N-2}\right)-\frac{1}{\eta}\omega_N Nr^{N-1}.
			\end{equation*}
			For $r\in (1/2,1)$
			\begin{equation*}
				g'(r)\leq c(R)-\frac{1}{\eta}\omega_N N\left(\frac{1}{2}\right)^{N-1}.
			\end{equation*}
			If we take $\eta<\eta(R)\ll 1$, then	$g'(r)<-c_3(R)$ for $r\in (\frac{1}{2},1)$ 
			and thus $g(r)$ attains its minimum at $r=1$ on that interval.
			
			Moreover  for \(r\in (0,1/2)\) 
			\begin{equation*}
				g(r)=\frac{R^{N-2}-1}{(\frac{R}{r})^{N-2}-1}\Capa_R(B_1)+\frac{1}{\eta}\left(\omega_N\left(1-r^N\right)\right)\geq
				\frac{1}{\eta}\left(\omega_N\left(1-\left(\frac{1}{2}\right)^N\right)\right).
			\end{equation*}
			Since \(g(1)=\Capa_R(B_1)=c(R)\) we can take \(\eta\) small enough depending only on \(R\) to ensure that \(g(r)\ge g(1)\) for all \(r\in [0,1/2)\).  
			
			\item $1\leq r<R$
			\begin{equation*}
				\begin{aligned}
					g'(r)&=c(R)\left(\frac{R^{N-2}-1}{R^{N-2}-r^{N-2}}r^{N-3}+\frac{R^{N-2}-1}{(R^{N-2}-r^{N-2})^2}r^{N-3}r^{N-2}\right)-\eta\omega_N Nr^{N-1}\\
					&\geq c(R)-\eta\omega_N NR^{N-1}.\\
				\end{aligned}	
			\end{equation*}	
			Taking  $\eta\ll 1$ depending only on \(r\) we get $g'(r)>c_4(R)$ for $r\in (1,R)$ 
			and thus $g(r)$ attains its minimum at $r=1$ also on this interval.\\
		\end{itemize}
		To prove the last claim just note that 
		\begin{equation*}
		  \lim_{r\rightarrow 1^-}g'(r)\leq-c_3\qquad \lim_{r\rightarrow 1^+}g'(r)\geq c_4. 
		\end{equation*}
	\end{proof}
	
	\begin{proof}[Proof of Lemma \ref{propofCeta}]
		The proof works exactly as the one in the previous lemma, just using the equality
		\[
		\Capa(B_r)=\Capa(B_1)r^{n-2}.
		\]
	\end{proof}	

	\subsection{A penalized minimum problem}
	The sequence in Theorem \ref{SelPr} is obtained by solving the following minimum problem.
	\begin{equation} \label{pertpb}
		\min{\{\mathscr{C}^*_{\hat{\eta},j}(\Omega):\Omega\subset B_R\}},
	\end{equation}
	where 
	\[
	\mathscr{C}^*_{\hat{\eta},j}(\Omega)=\mathscr{C}^*_{\hat{\eta}}(\Omega)+\sqrt{\epsilon_j^2+\sigma^2(\alpha_*(\Omega)-\epsilon_j)^2}
	=\Capa_*(\Omega)+f_{\hat{\eta}}(\vert\Omega\vert)+\sqrt{\epsilon_j^2+\sigma^2(\alpha_*(\Omega)-\epsilon_j)^2}.
	\]
	
	We start proving the existence of minimizers. As in \cite{fkstab}, in order to ensure the continuity of the asymmetry term, one needs   to construct a minimizing sequence with equibounded perimeter. Recall also that a set is said to be quasi open if it is the zero level set of a \(W^{1,2}\) function.
	\begin{lemma}
		There exists $\sigma_0=\sigma_0(N,R)>0$ such that for every $\sigma<\sigma_0$
		the minimum in \eqref{pertpb} is attained by a quasi-open set $\Omega_j^*$.
		Moreover, perimeters of $\Omega_j^*$ are bounded independently on $j$.
	\end{lemma}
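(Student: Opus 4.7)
My plan is to proceed via the direct method of the calculus of variations, with the key technical ingredient being an upgrading of any minimizing sequence to one of uniformly bounded perimeter, obtained by level-set truncation of the capacitary potential (as in~\cite{fkstab}). Without perimeter control, capacity bounds alone do not give $L^1$-compactness of characteristic functions, and the asymmetry term $\alpha_*$ is only Lipschitz under $L^1$-convergence, so BV-type compactness is essential.

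\textbf{Step 1: a priori bounds.} Fix $j$ and take a minimizing sequence $(\Omega_n) \subset B_R$. Using $B_1$ as competitor,
\[
\mathscr{C}^*_{\hat{\eta},j}(\Omega_n) \le \mathscr{C}^*_{\hat{\eta},j}(B_1) = \Capa_*(B_1) + \epsilon_j\sqrt{1+\sigma^2},
\]
so $\Capa_*(\Omega_n) \le C(N,R)$ and, by the one-sided coercivity \eqref{e:feta} of $f_{\hat{\eta}}$, the measures $|\Omega_n|$ stay trapped in a compact subinterval of $(0,|B_R|]$.

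\textbf{Step 2: enhancement to equibounded perimeter.} Let $u_n$ be the capacitary potential of $\Omega_n$. Cauchy--Schwarz and the coarea formula yield
\[
\int_0^1 P(\{u_n>t\})\,dt \;=\; \int_{B_R} |\nabla u_n|\,dx \;\le\; |B_R|^{1/2}\Capa_*(\Omega_n)^{1/2} \;\le\; C(N,R),
\]
so in every subinterval $(1-\delta,1)$ I can pick $t_n$ with $P(\{u_n>t_n\}) \le C(N,R)/\delta$. Setting $\tilde{\Omega}_n := \{u_n>t_n\}$ one has $\Omega_n \subset \tilde{\Omega}_n$ up to capacity zero (because $u_n=1$ q.e.\ on $\Omega_n$), $\Capa_*(\tilde{\Omega}_n) \le \Capa_*(\Omega_n)/t_n^2$ (using $u_n/t_n$ as a competitor), and $|\tilde{\Omega}_n \Delta \Omega_n| \to 0$ as $t_n\to 1$. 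With a diagonal choice making $t_n \to 1$ slowly enough in $n$, the sequence $(\tilde{\Omega}_n)$ is still minimizing: the capacity overshoot of order $(t_n^{-2}-1)$, the shift in $f_{\hat{\eta}}$ (Lipschitz on the bounded volume range), and the $\sigma$-Lipschitz perturbation of the $\sqrt{\cdot}$ term (via Lemma~\ref{propasym}~\ref{asymlip}) all become negligible with respect to $\mathscr{C}^*_{\hat{\eta},j}(\Omega_n) - \inf \to 0$.

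\textbf{Step 3: compactness, lsc, and conclusion.} From $\tilde{\Omega}_n \subset B_R$ and $P(\tilde{\Omega}_n) \le C$, BV-compactness gives $\tilde{\Omega}_{n_k} \to \Omega^*_j$ in $L^1(B_R)$ for some set of finite perimeter. The terms $f_{\hat{\eta}}(|\cdot|)$ and $\alpha_*$ pass to the limit by continuity (Lemma~\ref{propasym}~\ref{asymlip}), while $\Capa_*$ is lower semicontinuous under $L^1$-convergence of sets contained in $B_R$ (via Mosco convergence of the obstacle classes $\{v\in W^{1,2}_0(B_R):v\ge\chi_{\tilde{\Omega}_{n_k}}\}$). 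Hence $\Omega^*_j$ minimizes $\mathscr{C}^*_{\hat{\eta},j}$, and after a capacity-zero adjustment it is quasi-open.

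\textbf{Step 4: uniformity in $j$, and the main obstacle.} All constants in Steps 1--2 depend only on $N$ and $R$, since $\Capa_*(B_1) + \epsilon_j\sqrt{1+\sigma^2}$ is bounded uniformly in $j$ (as $\epsilon_j \to 0$); lower semicontinuity of the perimeter then propagates the bound to $\Omega^*_j$, yielding $P(\Omega^*_j) \le C(N,R)$ independently of $j$. The main technical obstacle is Step 2: the level-truncation trick must be calibrated so that the capacity loss $\Capa_*(\Omega_n)(t_n^{-2}-1)$ is controlled by the vanishing slack $\mathscr{C}^*_{\hat{\eta},j}(\Omega_n)-\inf$, and the $\sigma$-Lipschitz $\sqrt{\cdot}$ perturbation must not swamp this balance. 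The smallness condition $\sigma < \sigma_0(N,R)$ is precisely what keeps the $\sqrt{\cdot}$ term mildly perturbative and hence allows the truncation construction to preserve the minimizing property.
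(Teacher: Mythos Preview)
Your Step 2 contains a genuine gap. The naive coarea/Cauchy--Schwarz estimate only gives $\int_0^1 P(\{u_n>t\})\,dt \le C(N,R)$, so the level $t_n \in (1-\delta,1)$ you select satisfies merely $P(\tilde\Omega_n) \le C/\delta = C/(1-t_n)$. To keep the truncated sequence minimizing you need $t_n \to 1$ (otherwise the capacity overshoot $\Capa_*(\Omega_n)(t_n^{-2}-1)$ does not vanish), but then this perimeter bound blows up and BV-compactness is lost. No diagonal choice reconciles the two requirements: sending $t_n \to 1$ at any rate destroys the uniform perimeter control, so Step~3 cannot be executed and the uniform bound $P(\Omega^*_j) \le C(N,R)$ claimed in Step~4 is unjustified.

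The resolution in the paper is to exploit almost-minimality more sharply. Comparing $\Omega_n$ with the enlargement $\{u_n > 1-t\}$, the monotone decrease of $f_{\hat\eta}$ absorbs the $\sigma$-Lipschitz asymmetry term (this is exactly where $\sigma < \sigma_0(N,R)$ enters), yielding
\[
\int_{\{1-t<u_n<1\}}|\nabla u_n|^2 \;+\; \frac{\hat\eta}{2}\bigl|\{1-t<u_n<1\}\bigr| \;\le\; \frac{1}{n} + C(N,R)\,t.
\]
Then AM--GM and coarea give $\int_{1-t}^1 P(\{u_n>s\})\,ds \le c(\hat\eta)^{-1/2}\bigl(\tfrac{1}{n} + Ct\bigr)$, so with $t = 1/\sqrt{n}$ one finds a level $s_n\in(1-t,1)$ with $P(\{u_n>s_n\}) \le c\bigl(1/\sqrt{n} + C\bigr)$, uniformly bounded. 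The key point you are missing is that minimality forces both the Dirichlet energy \emph{and} the volume of the layer $\{1-t<u_n<1\}$ to be $O(t)$ rather than $O(1)$; this concentration is precisely what makes the truncation compatible with a uniform perimeter bound.

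A secondary issue: in Step~3 capacity is not directly lower semicontinuous under $L^1$-convergence of sets. The $L^1$-limit $\Omega^*_j$ may differ from $\{u=1\}$, where $u$ is the weak limit of the potentials, and lsc of the Dirichlet integral only controls the capacity of the latter. The paper closes this by a further comparison (again using $\sigma$ small against $\hat\eta$) to show the two sets coincide up to measure zero; your appeal to Mosco convergence skips over this identification.
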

	\begin{proof}	We will focus on the capacity with respect to the ball. For the case of capacity in $\mathds{R}^n$ 
		one simply replaces  $W_0^{1,2}(B_R)$ by \(D^{1,2}(\mathds R^N)\). 
		
		\medskip
		\noindent
		\textbf{Step 1: finding minimizing sequence with  bounded perimters.}		
		We consider $\{V_k\}_{k\in\mathds{N}}$ -- a minimizing sequence for $\mathscr{C}^R_{\hat{\eta},j}$, satisfying
		\begin{equation*}
			\mathscr{C}^R_{\hat{\eta},j}(V_k)\leq \inf{\mathscr{C}^R_{\hat{\eta},j}}+\frac{1}{k}.
		\end{equation*}
		We denote by $v_k$ the capacitary potentials of $V_k$, so $V_k=\{x\in B_R: v_k=1\}$.
		We take as a variation the slightly enlarged set $\tilde{V}_k$:
		\begin{equation*}
			\tilde{V}_k=\{x\in B_R: v_k>1-t_k\},
		\end{equation*}
		where $t_k=\frac{1}{\sqrt{k}}$.
		
		Note that the function $\tilde{v}_k=\frac{\min(v_k, 1-t_k)}{1-t_k}$ is in $W^{1,2}_0(B_R)$ and  \(v_k=1\) on $\tilde{V}_k$, so we can bound the capacity of $\tilde{V}_k$ by $\cp{B_R}{\tilde{v}_k}$. Since $V_k$ is almost minimizing, we write
		\begin{equation*}
			\begin{aligned}
				&\int_{\{v_k<1\}}{\vert\nabla v_k\vert^2}dx+f_{\hat{\eta}}(|\{v_k=1\}|)+\sqrt{\epsilon_j^2+\sigma^2(\alpha(\{v_k=1\})-\epsilon_j)^2}\\
				&\leq\int_{\{v_k<1-t_k\}}{\left\vert\nabla \left(\frac{v_k}{1-t_k}\right)\right\vert^2}dx+f_{\hat{\eta}}(|\{v_k\geq 1-t_k\}|)+\sqrt{\epsilon_j^2+\sigma^2(\alpha(\{v_k\geq 1-t_k\})-\epsilon_j)^2}+\frac{1}{k}.
			\end{aligned}	
		\end{equation*}
		We use \eqref{e:feta} and the fact that the function $t\mapsto\sqrt{\epsilon_j^2+\sigma^2(t-\epsilon_j)^2}$ is \(1\) Lipschitz to get
		\begin{equation*}
			\begin{aligned}
				&\int_{\{1-t_k<v_k<1\}}{\vert\nabla v_k\vert^2}dx+\hat\eta|\{1-t_k<v_k<1\}|\\
				&\leq\sigma\big(\vert\alpha(\{v_k\geq 1-t_k\}-\alpha(\{v_k=1\})\vert\big)+\frac{1}{k}+\int_{\{v_k<1-t_k\}}{\left(\left(\frac{1}{1-t_k}\right)^2-1\right)\vert\nabla v_k\vert^2}dx\\
				&\leq	C(R)\sigma\vert\{1-t_k<v_k\leq 1\}\vert+\frac{1}{k}+\left(\left(\frac{1}{1-t_k}\right)^2-1\right) \Capa_R(V_k)\\
				&\leq	C(R)\sigma\vert\{1-t_k<v_k\leq 1\}\vert+\frac{1}{k}+c(N,R)t_k,
			\end{aligned}	
		\end{equation*}
		where in the second inequality we used Lemma \ref{propasym}, \ref{asymlip}. Taking $\sigma<\frac{\hat\eta}{2C(R)}$, we obtain
		\begin{equation*}
			\int_{\{1-t_k<v_k<1\}}{\vert\nabla v_k\vert^2}dx+\frac{\hat \eta}{2}(|\{1-t_k<v_k<1\}|)\leq\frac{1}{k}+c(N,R)t_k.
		\end{equation*}
		We estimate the left-hand side from below, using the arithmetic-geometric  mean inequality, the Cauchy-Schwarz inequality, and the co-area formula.
		\begin{equation*}
			\begin{aligned}
				&\int_{\{1-t_k<v_k<1\}}{\vert\nabla v_k\vert^2}dx+\frac{\hat\eta}{2}(|\{1-t_k<v_k<1\}|)\\
				&\geq 2\left(\int_{1-t_k<v_k<1}{\vert\nabla v_k\vert^2}dx\right)^\frac{1}{2}\left(\frac{\hat \eta}{2}(|\{1-t_k<v_k<1\}|)\right)^\frac{1}{2}\\
				&\geq \sqrt{2\hat\eta}\int_{1-t_k<v_k<1}{\vert\nabla v_k\vert}dx
				=\sqrt{2\hat\eta}\int_{1-t_k}^1{P(v_k>s)}ds.
			\end{aligned}	
		\end{equation*}
where \(P(E)\) denotes the De Giorgi perimeter of a set \(E\). Hence, there exists a level $1-t_k<s_k<1$ such that for $\hat{V}_k=\{v_k>s_k\}$
		\begin{equation*}
			P(\hat{V}_k)\leq\frac{1}{t_k}\int_{1-t_k}^1{P(\{v_k>s\})}ds\leq\frac{1}{t_k\sqrt{2\hat\eta}k}+c(N,R)=\frac{1}{\sqrt{2\hat\eta k}}+c(N,R).
		\end{equation*}	
		where in the last equality we have used that \(t_k=\frac{1}{\sqrt{k}}\).  
		These $\hat{V}_k$ will give us the desired "good" minimizing sequence, indeed 
		\begin{equation*}
			\begin{aligned}
				\mathscr{C}^R_{\hat{\eta},j}&(\hat{V}_k)
				\\
				&\leq\mathscr{C}^R_{\hat{\eta},j}(V_k)+f_{\hat{\eta}}(\vert\{ v_k>s_k\}\vert)-f_{\hat{\eta}}(\vert \{v_k=1\}\vert)
				+C\sigma\vert\{1-s_k<v_k<1\}\vert \leq\mathscr{C}^R_{\hat{\eta},j}(V_k),
			\end{aligned}
		\end{equation*}
		where in the first inequality we have used that \(\hat V_k\subset V_k\) and in the second that, thanks to our choice of \(\sigma\),
		\[
		f_{\hat{\eta}}(\vert \{v_k>s_k\}\vert)-f_{\hat{\eta}}(\vert \{v_k=1\}\vert)
				+C\sigma\vert\{1-s_k<v_k<1\}\le (C\sigma-\hat\eta)\vert\{1-s_k<v_k<1\}\le 0.
		\]
		
\medskip
\noindent
\textbf{Step 2: Existence of a minimizer.}		
		Since $\{\hat{V}_k\}_k$ is a sequence with  equibounded perimeter,s there exists a Borel set $\hat{V}_\infty$ such that up to a (not relabelled) subsequence 
		$$ 1_{\hat{V}_k}\rightarrow 1_{\hat{V}_\infty} \text{ in } L_1(B_R)\text{ and a.e. in }B_R, \qquad P(\hat{V}_\infty)\leq C(N,R).$$
We want to show that $\hat{V}_\infty$ is a minimizer for $\mathcal{C}_{\eta,j}$.	We set $\hat{v}_k=\frac{\min(v_k, s_k)}{s_k}$ and we note that they are the capacitary potentials of \(\hat V_k\). Moreover  the sequence $\{\hat{v}_k\}_k$ is bounded in $W^{1,2}_0(B_R)$. Thus, there exists a function $\hat{v}\in W^{1,2}_0(B_R)$ such that up to a (not relabelled) subsequence $$\hat{v}_k\rightarrow \hat{v} \text{ strongly in }L^2(B_R)\text{ and a.e. in }B_R.$$
		Let us define  $\hat{V}=\{x:\hat{v}=1\}$, we want to show that \(\hat V\) is a minimizer. First,  note that
		$$1_{\hat{V}}(x)\geq \limsup{1_{\hat{V}_k}}(x)=1_{\hat{V}_\infty}(x)\qquad\text{ for a.e. }x\in B_R,$$
hence \(|\hat V_\infty \setminus \hat V|=0\). Moreover, by the lower semicontinuity of Dirichlet integral, the monotonicity of \(f_{\hat{\eta}}\) and the continuity of $\alpha$ with respect to the $L^1$ convergence, we have
		\begin{equation}
		\begin{aligned} \label{Vinftyismin}
		\inf\mathscr{C}^R_{\hat{\eta},j}&=\lim_k \int \vert\nabla\hat{v}_k\vert^2+f_{\hat{\eta}}(\vert\hat{V}_k\vert)+\sqrt{\epsilon_j^2+\sigma^2(\alpha(\hat{V}_k)-\epsilon_j)^2}
			\\
			&\ge \Capa_R(\hat{V})+f_{\hat{\eta}}(\vert\hat{V}_\infty\vert)
			+\sqrt{\epsilon_j^2+\sigma^2(\alpha(\hat{V}_\infty)-\epsilon_j)^2}.
			&\ge \Capa_R(\hat{V})+f_{\hat{\eta}}(\vert\hat{V}\vert)
		\end{aligned}	
		\end{equation}
		Hence
		\begin{equation*}
		\begin{aligned} 
			\Capa_R(\hat{V})&+f_{\hat{\eta}}(\vert\hat{V}_\infty\vert)
			+\sqrt{\epsilon_j^2+\sigma^2(\alpha(\hat{V}_\infty)-\epsilon_j)^2}
			\leq\inf\mathscr{C}^R_{\hat{\eta},j}(\Omega)\\
			&\leq \Capa_R(\hat{V})+f_{\hat{\eta}}(\vert\hat{V}\vert)
			+\sqrt{\epsilon_j^2+\sigma^2(\alpha(\hat{V})-\epsilon_j)^2}.
		\end{aligned}	
		\end{equation*}
		Using Lemma \ref{propasym} \ref{asymlip}   we get 
		$$f_{\hat{\eta}}(\vert\hat{V}_\infty\vert)-f_{\hat{\eta}}(\vert\hat{V}\vert)\leq C\sigma\vert\hat{V}\Delta\hat{V}_\infty\vert=C\sigma\vert\hat{V}\setminus \hat{V}_\infty\vert.$$
		Since $\vert\hat{V}\vert\geq\vert\hat{V}_\infty\vert$, \eqref{e:feta} and  our choice of \(\sigma\)   yield
		$$\hat \eta\vert\hat{V}\backslash\hat{V}_\infty\vert\leq f_{\hat{\eta}}(\vert\hat{V}_\infty\vert)-f_{\hat{\eta}}(\vert\hat{V}\vert)\leq C\sigma\vert\hat{V}\backslash\hat{V}_\infty\vert\le \frac{\hat{\eta}}{2}\vert\hat{V}\backslash\hat{V}_\infty\vert,$$
	from which we conclude that \(|\hat V\Delta \hat V_\infty|=0\) and thus, by \eqref{Vinftyismin} that \(\hat V\) is the desired minimizer.
	\end{proof}
	
	\subsection{First properties of the minimizers}
	
	Let us conclude by establishing some properties of the minimizers of \eqref{pertpb}.
	\begin{lemma} \label{firstpropmin}
		Let $\{\Omega_j\}$ be a sequence of minimizers for \eqref{pertpb}.
		Then the following properties hold:
		\begin{enumerate}[label=\textup{(\roman*)}]
			\item 
			$\vert\alpha_*(\Omega_j)-\epsilon_j\vert\leq 3\sigma\epsilon_j$;
			\item 
			$\big\vert\vert\Omega_j\vert-\vert B_1\vert\big\vert\leq C\sigma^4\epsilon_j$;
			\item
				\begin{enumerate}[label=(\Alph*)]
					\item for the capacity in $\mathds{R}^n$
						 up to translations $\Omega_j\rightarrow B_1$ in $L^1$,
					\item for the relative capacity	 
						 $\Omega_j\rightarrow B_1$ in $L^1$;
				\end{enumerate}
			\item 
			$0\leq\mathscr{C}^*_{\hat{\eta}}(\Omega_j)-\mathscr{C}^*_{\hat{\eta}}(B_1)\leq\sigma^4\epsilon_j$.
		\end{enumerate}
	\end{lemma}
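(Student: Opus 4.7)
All four assertions are different consequences of a single master inequality that comes from testing the minimality of $\Omega_j$ against the contradicting sequence $\tilde\Omega_j$. Since $|\tilde\Omega_j|=|B_1|$ and $\alpha_*(\tilde\Omega_j)=\epsilon_j$, one computes
\[
\mathscr{C}^*_{\hat{\eta},j}(\tilde\Omega_j)=\Capa_*(\tilde\Omega_j)+f_{\hat\eta}(\omega_N)+\sqrt{\epsilon_j^2}=\Capa_*(\tilde\Omega_j)+\epsilon_j\le \Capa_*(B_1)+(1+\sigma^4)\epsilon_j,
\]
so by minimality of $\Omega_j$ and the identity $\mathscr{C}^*_{\hat\eta}(B_1)=\Capa_*(B_1)$ we obtain the master inequality
\[
\mathscr{C}^*_{\hat\eta}(\Omega_j)-\mathscr{C}^*_{\hat\eta}(B_1)+\sqrt{\epsilon_j^2+\sigma^2(\alpha_*(\Omega_j)-\epsilon_j)^2}\le (1+\sigma^4)\epsilon_j.
\]

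\textbf{Deriving (i) and (iv).} Lemma \ref{propofCeta'} (resp.\ \ref{propofCeta}) provides the lower bound $\mathscr{C}^*_{\hat\eta}(\Omega_j)\ge \mathscr{C}^*_{\hat\eta}(B_1)$, which is the nonnegativity half of (iv). Plugging this into the master inequality and using $\sqrt{\epsilon_j^2+\sigma^2(\alpha_*(\Omega_j)-\epsilon_j)^2}\ge \epsilon_j$ yields at once $\mathscr{C}^*_{\hat\eta}(\Omega_j)-\mathscr{C}^*_{\hat\eta}(B_1)\le \sigma^4\epsilon_j$, giving (iv). For (i), dropping the energy difference (again nonnegative) gives $\sqrt{\epsilon_j^2+\sigma^2(\alpha_*(\Omega_j)-\epsilon_j)^2}\le (1+\sigma^4)\epsilon_j$; squaring,
\[
\sigma^2(\alpha_*(\Omega_j)-\epsilon_j)^2\le ((1+\sigma^4)^2-1)\epsilon_j^2\le 3\sigma^4\epsilon_j^2
\]
for $\sigma$ small, whence $|\alpha_*(\Omega_j)-\epsilon_j|\le \sqrt{3}\,\sigma\epsilon_j\le 3\sigma\epsilon_j$.

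\textbf{Deriving (ii).} Schwarz symmetrization gives $\Capa_*(\Omega_j)\ge \Capa_*(B_{r_j})$, where $r_j:=(|\Omega_j|/|B_1|)^{1/N}$, and of course $f_{\hat\eta}(|\Omega_j|)=f_{\hat\eta}(\omega_N r_j^N)$, so that $\mathscr{C}^*_{\hat\eta}(\Omega_j)\ge \mathscr{C}^*_{\hat\eta}(B_{r_j})$. Combining with (iv) and the quantitative estimate \eqref{e:quantitiveballs1}/\eqref{e:quantitiveballs2}, we get $c|r_j-1|\le \mathscr{C}^*_{\hat\eta}(B_{r_j})-\mathscr{C}^*_{\hat\eta}(B_1)\le \sigma^4\epsilon_j$. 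Since $\Omega_j\subset B_R$ forces $r_j\le R$, the map $r\mapsto \omega_N r^N$ is Lipschitz on $[0,R]$, so $\big||\Omega_j|-|B_1|\big|\le C(R)|r_j-1|\le C\sigma^4\epsilon_j$.

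\textbf{Deriving (iii).} From the perimeter bound established in the existence proof we can extract a subsequence with $1_{\Omega_j}\to 1_\Omega$ in $L^1(B_R)$ (after a suitable translation $x_j$ in the absolute case, using that barycenters stay in $B_R$); by lower semicontinuity of $\Capa_*$ (on capacitary potentials converging weakly in $W^{1,2}$, as in the existence argument) and by continuity of $f_{\hat\eta}(|\cdot|)$ and of $\alpha_*$ in $L^1$ (Lemma \ref{propasym}\ref{asymlip}), together with (iv), the limit $\Omega$ satisfies $\mathscr{C}^*_{\hat\eta}(\Omega)\le \mathscr{C}^*_{\hat\eta}(B_1)$, hence is a minimizer of $\mathscr{C}^*_{\hat\eta}$. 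By Lemma \ref{propofCeta'} in the relative case $\Omega=B_1$, and by Lemma \ref{propofCeta} in the absolute case $\Omega$ is a translate of $B_1$, which after subtracting its barycenter (automatically zero in the limit by the choice of $x_j$) is $B_1$. Because every subsequence admits a sub-subsequence with the same limit, the whole sequence converges, which is the main (and only delicate) point here.
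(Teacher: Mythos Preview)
Your proof is correct and follows essentially the same approach as the paper: both derive the master inequality by comparing $\Omega_j$ against the contradicting sequence $\tilde\Omega_j$, then extract (i), (ii), (iv) from it in the same way (using Lemmas~\ref{propofCeta'}/\ref{propofCeta} for the nonnegativity in (iv), and the isocapacitary inequality together with \eqref{e:quantitiveballs1}/\eqref{e:quantitiveballs2} for (ii)).

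The only noteworthy difference is in (iii). The paper argues more directly: from (i) one has $\alpha_*(\Omega_j)\le(1+3\sigma)\epsilon_j\to 0$, so by the perimeter bound and the $L^1$-continuity of $\alpha_*$ (Lemma~\ref{propasym}\ref{asymlip}) every $L^1$ limit point has zero asymmetry, hence equals $B_1$ (or a translate in the absolute case). Your route via lower semicontinuity of $\mathscr{C}^*_{\hat\eta}$ and the characterization of its minimizers is also correct but slightly heavier, since it requires reproducing the semicontinuity step from the existence argument; the paper's shortcut avoids this by using only the continuity of $\alpha_*$.
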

	\begin{proof}
		Recall that the  sequence $\{\Omega_j\}$ was obtained by a  sequence $\{\tilde{\Omega}_j\}$ satisying
		\begin{enumerate}
			\item $\vert\tilde{\Omega}_j\vert=\vert B_1\vert$,
			\item $\alpha_*(\tilde{\Omega}_j)=\epsilon_j$,
			\item $\Capa_*(\tilde{\Omega}_j)-\Capa_*(B_1)\leq\sigma^4\epsilon_j$.
		\end{enumerate}
		We now use $\{\tilde{\Omega}_j\}$ as comparison domains for the functionals $\mathscr{C}^*_{\hat{\eta},j}$ to get
		\begin{equation} \label{comporigcontr}
			\mathscr{C}^*_{\hat{\eta}}(\Omega_j)+\epsilon_j\leq\mathscr{C}^*_{\hat{\eta},j}(\Omega_j)
			\leq\mathscr{C}^*_{\hat{\eta},j}(\tilde{\Omega_j})=\mathscr{C}^*_{\hat{\eta}}(\tilde{\Omega_j})+\epsilon_j
			\leq\mathscr{C}^*_{\hat{\eta}}(B_1)+\epsilon_j(1+\sigma^4),
		\end{equation}
		implying that 
		$$\mathscr{C}^*_{\hat{\eta}}(\Omega_j)-\mathscr{C}^*_{\hat{\eta}}(B_1)\leq\epsilon_j\sigma^4,$$
		which proves (iv). Note that we defined $f_{\hat{\eta}}$ in such a way that $\mathscr{C}^*_{\hat{\eta}}(\Omega_j)\geq\mathscr{C}^*_{\hat{\eta}}(B_1)$. Thus, using \eqref{comporigcontr} we also deduce that
		$$\sqrt{\epsilon_j^2+\sigma^2(\alpha_*(\Omega_j)-\epsilon_j)^2}\leq\epsilon_j(1+\sigma^4),$$
		which gives (i).
		To estimate the volume of $\Omega_j$, we use the classical isocapacitary inequality and properties of $f_{\hat{\eta}}$ and \eqref{e:quantitiveballs1}, \eqref{e:quantitiveballs2}. Indeed, let $B^j$ be the ball centered in the origin such that $\vert B^j\vert=\vert \Omega_j\vert$.Then 
		$$\sigma^4\epsilon_j\geq\mathscr{C}^*_{\hat{\eta}}(\Omega_j)-\mathscr{C}^*_{\hat{\eta}}(B_1)
		\geq\mathscr{C}^*_{\hat{\eta}}(B^j)-\mathscr{C}^*_{\hat{\eta}}(B_1)\geq c(R)\big|\vert\Omega_j\vert -\vert B_1\vert\big|,$$
		where in the last inequality we have used \eqref{e:quantitiveballs1}, \eqref{e:quantitiveballs2}. This proves (ii). To prove (ii) we recall that the sets  $\Omega_j$ have equibounded perimeter. Hence, the sequence  $\{\Omega_j\}_j$ is precompact in $L^1(B_R)$. Since the asymmetry is continuous with respect to $L^1$ convergence  any limit set has zero asymmetry. The only set with zero asymmetry is the unit ball (or a translated unit ball in the case of the absolute), proving (iii).
	\end{proof} 

	\section{Proof of Theorem \ref{SelPr}: Regularity}\label{reg}
	
In this section, we show that  the sequence of minimizers of  \eqref{penpb} converges smoothly to  the unit ball. This will be done by relying on the regularity theory for free boundary problems established in \cite{AC}.
	
	\subsection{Linear growth away from the free boundary} Let $u_j$ be the capacitary potential for $\Omega_j$, a minimizer of \eqref{pertpb}.	Let us  $v_j:=1-u_j$, so that $\Omega_j=\{v_j=0\}$, $v_j=1$ on $\partial B_R$, following \cite{AC} we are going to show that 
	\[
	v_j(x)\sim \dist(x,\Omega_j).
	\]
where the implicit constant depends only on \(R\). The above estimate is obtained by suitable comparison estimates. In order to be able to perform them with constants which depend only on \(R\), we need to know that \(\{u_j=1\}\) is uniformly far from \(\partial B_R\). This will be achieved by first establishing  (uniform in \(j\)) H\"older  continuity of  \(u_j\).
	
	\subsubsection{H\"older continuity} 
	The proof of H\"older continuity is quite standard and it is based  on establishing a decay estimate for the integral oscillation of \(u_j\). Since, thanks to the minimizing property, $u_j$ is close to the harmonic function in $B_r(x_0)\cap B_R$ with the same boundary value, we  start by recalling  the decay of the harmonic functions both in the interior and at the boundary. The following is well known, see for instance  \cite[Proposition 5.8]{GM}.
	
	\begin{lemma} \label{harmgrowth}
		Suppose $w\in W^{1,2}(\Omega)$ is harmonic, $x_0\in\Omega$.
		Then there exists a constant $c=c(n)$ such that for any balls $B_{r_1}(x_0)\subset B_{r_2}(x_0)\Subset\Omega$	
		\begin{equation}
			\fint_{B_{r_1}(x_0)}\left(w-\fint_{B_{r_1}(x_0)}w\right)^2\leq c\left(\frac{r_1}{r_2}\right)^2\fint_{B_{r_2}(x_0)}\left(w-\fint_{B_{r_2}(x_0)}w\right)^2.
		\end{equation}
	\end{lemma}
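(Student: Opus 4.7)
The plan is to prove this Campanato-type decay estimate by combining interior gradient bounds for harmonic functions with the minimality of the mean in $L^2$. I would split the argument into two regimes depending on the ratio $r_1/r_2$.

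The main case is $r_1 \le r_2/2$. Here the underlying idea is that harmonic functions are essentially affine at small scales, so their $L^2$ oscillation on a small ball is controlled by $r_1^2$ times the $L^\infty$ norm of the gradient. First I would replace the average $\fint_{B_{r_1}(x_0)} w$ by the pointwise value $w(x_0)$, which only increases the left-hand side since the mean minimizes the $L^2$ distance, and then bound
\[
\fint_{B_{r_1}(x_0)}\bigl(w-w(x_0)\bigr)^2 \le r_1^2\,\|\nabla w\|_{L^\infty(B_{r_1}(x_0))}^2.
\]
Since each partial derivative of $w$ is itself harmonic, and $w-c$ is harmonic for every constant $c$, the classical Cauchy-type estimate (mean value applied to $\nabla w$ combined with an integration by parts in the ball $B_{r_2/2}(y)$) gives
\[
|\nabla w(y)|^2 \le \frac{C(N)}{r_2^2}\fint_{B_{r_2/2}(y)}(w-c)^2 \qquad \forall\, y\in B_{r_2/2}(x_0),\ \forall\, c\in\mathds{R},
\]
where I use that $B_{r_2/2}(y)\subset B_{r_2}(x_0)$. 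Choosing $c:=\fint_{B_{r_2}(x_0)}w$ and enlarging the inner ball to $B_{r_2}(x_0)$ at the cost of a factor $2^N$, I obtain
\[
\|\nabla w\|_{L^\infty(B_{r_2/2}(x_0))}^2 \le \frac{C(N)}{r_2^2}\fint_{B_{r_2}(x_0)}\Bigl(w-\fint_{B_{r_2}(x_0)}w\Bigr)^2.
\]
Feeding this back yields the desired bound with $(r_1/r_2)^2$.

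In the complementary regime $r_2/2 < r_1 \le r_2$ the inequality is trivial: the minimality of the mean gives $\fint_{B_{r_1}}(w-\fint_{B_{r_1}}w)^2 \le \fint_{B_{r_1}}(w-\fint_{B_{r_2}}w)^2$, and enlarging the domain of integration from $B_{r_1}$ to $B_{r_2}$ costs at most $|B_{r_2}|/|B_{r_1}|\le 2^N$. Since $(r_1/r_2)^2\ge 1/4$ in this regime, the factor is absorbed.

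I do not anticipate any real obstacle: everything reduces to the standard interior Cauchy estimate for harmonic functions (see e.g.\ \cite{GM}), and the constant $c$ depends only on the dimension. The only point to be careful about is that the inequality is required for \emph{every} pair $r_1\le r_2$ with $B_{r_2}(x_0)\Subset\Omega$, hence the two-case split above.
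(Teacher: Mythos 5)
Your argument is correct, and since the paper quotes this lemma without proof (deferring to \cite[Proposition~5.8]{GM}), there is nothing to compare it against beyond the standard approach. Your proof is precisely that standard approach: combine the interior Cauchy estimate for the gradient of a harmonic function with the mean value property, and handle the regime $r_1>r_2/2$ trivially by absorbing the dimensional constant into $(r_1/r_2)^2\ge 1/4$. Two small remarks that do not affect correctness: (1) by the mean value property $w(x_0)=\fint_{B_{r_1}(x_0)}w$ exactly, so the ``mean minimizes the $L^2$ distance'' step is not actually needed (though it is harmless and correct); (2) in deducing $\fint_{B_{r_1}}(w-w(x_0))^2\leq r_1^2\|\nabla w\|^2_{L^\infty(B_{r_1}(x_0))}$ and then controlling the gradient on $B_{r_2/2}(x_0)$, you implicitly use $B_{r_1}(x_0)\subset B_{r_2/2}(x_0)$, which is exactly what the case split $r_1\le r_2/2$ guarantees, so the logic is complete.
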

	
Next lemma studies the decay at the boundary, the result is well known. Since we have not been able to find a precise reference for this statement, we report its simple proof.
	\begin{lemma}\label{p:holder}
		Let \(\Omega\) be an open set such that \(0\in \partial \Omega\) and let  $w\in W^{1,2}(B_r)$ be harmonic in $\Omega\cap B_r$, $w\equiv 0$ on  $B_r\setminus \Omega$. Assume that there exists \(\delta>0\) such that 
for  $\rho\leq r$ 
		\begin{equation*}
			\frac{\vert\Omega^c\cap B_\rho\vert}{\vert B_\rho\vert}\geq\delta.
		\end{equation*}
		Then there exist a constant $c=c(\delta)$ and an exponent $\gamma=\gamma (\delta)>0$ such that for any $0<r_1<r_2<r$ we have
		\begin{equation*}
			\fint_{B_{r_1}}w^2\leq c\left(\frac{r_1}{r_2}\right)^\gamma\fint_{B_{r_2}}w^2.
		\end{equation*}
	\end{lemma}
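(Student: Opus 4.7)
The key observation is that $w^2$ is a nonnegative subharmonic function on $B_r$ vanishing on a set of positive Lebesgue density at the origin; H\"older-type decay of $\sup_{B_\rho} w^2$ then follows by a De Giorgi--Nash--Moser oscillation argument, and the claimed $L^2$-average estimate is obtained from it via the Moser mean value inequality.

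First I would verify that $w^2$ is distributionally subharmonic on $B_r$. Inside $\Omega \cap B_r$ the function $w$ is smooth and harmonic, so $\Delta(w^2)=2|\nabla w|^2\ge 0$; outside, $w\equiv 0$ gives $w^2\equiv 0$. A standard truncation/approximation, using $w\in W^{1,2}(B_r)$ together with $w\equiv 0$ on $B_r\setminus\Omega$, shows that no negative distributional mass is created across $\partial\Omega\cap B_r$. Moser's mean value inequality then gives $\sup_{B_{\rho/2}}w^2\le C_N\fint_{B_\rho}w^2$ for every $B_\rho\subset B_r$. Setting $M(\rho):=\sup_{B_\rho}w^2$, I exploit the density hypothesis by noticing that $v_\rho:=M(\rho)-w^2$ is nonnegative and superharmonic on $B_\rho$ and equals $M(\rho)$ on $B_\rho\setminus\Omega$, a set whose measure is at least $\delta|B_\rho|$. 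The weak Harnack inequality for nonnegative superharmonic functions therefore yields
\[
\inf_{B_{\rho/2}} v_\rho \;\ge\; c_N \Bigl(\fint_{B_\rho} v_\rho^p\Bigr)^{1/p} \;\ge\; c_N\,\delta^{1/p}\,M(\rho)
\]
for some universal $p,c_N>0$, whence $M(\rho/2)\le\theta\,M(\rho)$ with $\theta:=1-c_N\delta^{1/p}\in(0,1)$.

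Iterating $M(\rho/2^k)\le\theta^k M(\rho)$ and interpolating between dyadic scales yields $M(s)\le C(s/\rho)^\gamma M(\rho)$ for every $0<s<\rho<r$, with $\gamma:=\log_2(1/\theta)>0$ depending only on $\delta$ and $N$. Combining with the mean value inequality, for $0<r_1<r_2<r$,
\[
\fint_{B_{r_1}} w^2 \;\le\; M(r_1) \;\le\; C\Bigl(\tfrac{r_1}{r_2}\Bigr)^\gamma M(r_2/2) \;\le\; C'\Bigl(\tfrac{r_1}{r_2}\Bigr)^\gamma \fint_{B_{r_2}} w^2,
\]
which is the claimed estimate. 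The main technical ingredient is the oscillation-reduction step, which rests on the weak Harnack inequality for superharmonic functions; if one wishes to avoid invoking it as a black box, it can be replaced by a direct capacity-based comparison, exploiting that the relative capacity of $B_\rho\setminus\Omega$ in $B_{2\rho}$ is bounded below by a constant times $\delta^{(N-2)/N}\rho^{N-2}$, which produces an explicit positive lower bound for $v_\rho$ on $B_{\rho/2}$ and thereby the same conclusion.
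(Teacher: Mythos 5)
Your proof is correct, but it takes a genuinely different route from the paper's. The paper works directly with the nonnegative subharmonic function $w$: it picks $x_0\in B_{1/4}$ near the supremum and applies the mean value inequality in $B_{3/4}(x_0)$, then uses that $w$ vanishes on $B_{3/4}(x_0)\setminus\Omega$ together with the density hypothesis to absorb a factor $1-c\delta$, yielding $\sup_{B_{1/4}}w\le (1-c\delta')\sup_{B_1}w$ with an explicit constant; the decay for $\fint w^2$ then follows since $w^2$ is subharmonic. You instead run the standard De Giorgi oscillation scheme on $w^2$: set $v_\rho=\sup_{B_\rho}w^2-w^2$, observe it is nonnegative superharmonic and equals $M(\rho)$ on a $\delta$-fraction of $B_\rho$, and invoke the weak Harnack inequality to obtain $\inf_{B_{\rho/2}}v_\rho\gtrsim \delta^{1/p}M(\rho)$. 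The trade-off: the paper's argument is completely elementary (only the mean value inequality, explicit constant $\delta|B_{1/4}|/|B_{3/4}|$) but implicitly relies on $w\ge 0$; your version uses the weak Harnack inequality as a black box, is slightly less explicit about constants, but is more robust (it would survive sign changes of $w$ and generalizes to divergence-form operators). Both rest on the subharmonicity of $w^2$ across $\partial\Omega$, which you and the paper each assert with only a sketch; your ``standard truncation'' remark is acceptable at the same level of rigor as the paper's own remark. There is no gap.
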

	\begin{rem}
	  Note that as $w$ is harmonic in \(\Omega\cap B_r\) and \(0\) on \(B_r\setminus \Omega\), $w^2$ is subharmonic in $B_r$, thus its means over balls increase
		with the radius. In particular,
		\begin{equation}
		\label{e:mv}
		\sup_{B_{r}}w^2\le c(N) \fint_{B_{3r}} w^2.
		\end{equation}
	\end{rem}
	\begin{proof}[Proof of Lemma \ref{p:holder}]
		For convenience, we assume that $r>1$ (we can reduce to this case by scaling).
		First, we note that it is enough to show the result for radii with the ratio equal to a power of $\frac{1}{4}$.
		Indeed, take $k\in\mathds{Z}_+$ such that $\frac{1}{4^{k+1}}\leq\frac{r_1}{r_2}<\frac{1}{4^k}$.
		Then
		\begin{equation*}
			\fint_{B_{r_1}}w^2\leq C 4^{-\gamma k} \fint_{B_{r_1 4^k}}w^2
			\leq C4^{-\gamma k} \fint_{B_{r_2}}w^2 \leq C4^\gamma \left(\frac{r_1}{r_2}\right)^{-\gamma} \fint_{B_{r_2}}w^2.
		\end{equation*}
		We work with powers of $\frac{1}{4}$. We start by showing 
		\begin{equation} \label{e:nome}
			\sup_{B_{\frac{1}{4}}}w\leq(1-c)\sup_{B_1}w.
		\end{equation}
		For any $\epsilon>0$ there exists some $x_0\in B_\frac{1}{4}$ such that
		$\sup_{B_{\frac{1}{4}}}w\leq w(x_0)+\epsilon$, so we can write
		\begin{equation*}
			\begin{aligned}
				&\sup_{B_{\frac{1}{4}}}w-\epsilon\leq w(x_0)\leq\fint_{B_{\frac{3}{4}}(x_0)}w
				\leq\frac{\vert\Omega\cap B_{\frac{3}{4}}(x_0)\vert}{\vert B_{\frac{3}{4}}(x_0)\vert}\sup_{B_1}w
				\\
				&=\left(1-\frac{\vert\Omega^c\cap B_{\frac{3}{4}}(x_0)\vert}{\vert B_{\frac{3}{4}}(x_0)\vert}
				\right)
				\sup_{B_1}w
				\leq\left(1-\frac{\vert\Omega^c\cap B_{\frac{1}{4}}\vert}{\vert B_{\frac{3}{4}}\vert}\right)\sup_{B_1}w\\
				&\leq\left(1-\delta\frac{\vert B_{\frac{1}{4}}\vert}{\vert B_{\frac{3}{4}}\vert}\right)\sup_{B_1}w,
			\end{aligned}
		\end{equation*}
		which proves \eqref{e:nome} since $\epsilon$ is arbitrary.
		Using induction and scaling we can extend this result to all powers of $\frac{1}{4}$.
		Indeed  $\tilde{w}(x)=w(x/4)$  satisfies the hypothesis of the theorem. Hence,
		\begin{equation*}
			\sup_{B_{\frac{1}{16}}}w=\sup_{B_{\frac{1}{4}}}\tilde{w}
			\leq (1-c)\sup_{B_1}\tilde{w}=(1-c)\sup_{B_{\frac{1}{4}}}w
			\leq (1-c)^2\sup_{B_1}w,
		\end{equation*}
and thus
		\begin{equation*} 
			\sup_{B_{\frac{1}{4^k}}}w\leq(1-c)^k\sup_{B_1}w.
		\end{equation*}
In the same way
		\begin{equation*} 
			\sup_{B_{\frac{1}{4^k}r}}w\leq(1-c)^k\sup_{B_r}w.
		\end{equation*}
		Now  
		\begin{equation*}
			\begin{aligned}
				\fint_{B_{\frac{1}{4^k}}}w^2&\leq(\sup_{B_{\frac{1}{4^k}}}w)^2
				\leq(1-c)^{2(k-1)}(\sup_{B_{\frac{1}{4}}}w)^2\\
				&\leq(1-c)^{2(k-1)}\left(\fint_{B_{\frac{3}{4}}(x_0)}w^2\right)
				\leq(1-c)^{2(k-1)}\left(c'\fint_{B_1}w^2\right),
			\end{aligned}
		\end{equation*}
	where we have used \eqref{e:mv}. We get from powers of $\frac{1}{4}$ to other radii again by scaling. This concludes the proof with  $\gamma=-\log_4(1-c)$.
	\end{proof}
	\begin{cor}\label{growth_of_means_for_harmonic}
	  Let $w$ as in the statement of Lemma \ref{p:holder}, then
		\begin{equation*}
			\fint_{B_{r_1}}\left(w-\fint_{B_{r_1}}w\right)^2\leq C\left(\frac{r_1}{r_2}\right)^\gamma\fint_{B_{r_2}}\left(w-\fint_{B_{r_2}}w\right)^2
		\end{equation*}
		for any $0<r_1<r_2<r$ with $C$ a  constant depending only on $\delta$.
	\end{cor}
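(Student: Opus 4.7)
The plan is to reduce the variance-decay to the $L^2$-decay already proved in Lemma \ref{p:holder} by exploiting the fact that $w$ vanishes on a set of positive density in every ball $B_\rho$ with $\rho\le r$.

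First I would simply observe the trivial bound
\[
\fint_{B_{r_1}}\Bigl(w-\fint_{B_{r_1}}w\Bigr)^2\le \fint_{B_{r_1}}w^2,
\]
so that Lemma \ref{p:holder} already gives
\[
\fint_{B_{r_1}}\Bigl(w-\fint_{B_{r_1}}w\Bigr)^2\le C\Bigl(\frac{r_1}{r_2}\Bigr)^{\gamma}\fint_{B_{r_2}}w^2.
\]
Thus everything reduces to controlling $\fint_{B_{r_2}}w^2$ by the variance $\fint_{B_{r_2}}(w-\bar w_{r_2})^2$, where $\bar w_{r_2}:=\fint_{B_{r_2}}w$.

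The main step will be a Poincar\'e-type estimate coming from the density assumption: since $w\equiv 0$ on $\Omega^c\cap B_{r_2}$ and $|\Omega^c\cap B_{r_2}|\ge \delta |B_{r_2}|$, one has
\[
\int_{B_{r_2}}(w-\bar w_{r_2})^2\ge \int_{\Omega^c\cap B_{r_2}}(w-\bar w_{r_2})^2=\bar w_{r_2}^{\,2}\,|\Omega^c\cap B_{r_2}|\ge \delta\, \bar w_{r_2}^{\,2}\,|B_{r_2}|,
\]
so that $\bar w_{r_2}^{\,2}\le \delta^{-1}\fint_{B_{r_2}}(w-\bar w_{r_2})^2$. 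Combined with the identity $\fint_{B_{r_2}}w^2=\fint_{B_{r_2}}(w-\bar w_{r_2})^2+\bar w_{r_2}^{\,2}$, this gives
\[
\fint_{B_{r_2}}w^2\le \Bigl(1+\frac1\delta\Bigr)\fint_{B_{r_2}}\Bigl(w-\fint_{B_{r_2}}w\Bigr)^2.
\]

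Plugging this into the earlier inequality yields the corollary with a constant $C$ depending only on $\delta$ (through both the $\gamma$-decay of Lemma \ref{p:holder} and the factor $1+1/\delta$). The genuinely substantive input is Lemma \ref{p:holder}; the rest is the elementary observation that on a set where $w$ vanishes on a definite fraction of the ball, the mean is automatically controlled by the oscillation, so no real additional obstacle is expected.
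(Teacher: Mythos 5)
Your proof is correct and follows essentially the same route as the paper: bound the variance on $B_{r_1}$ by the $L^2$ average, apply Lemma \ref{p:holder}, and then show that the density hypothesis makes the $L^2$ average on $B_{r_2}$ comparable to the variance. The paper derives that last comparability via H\"older's inequality together with the identity $\fint w^2 = \fint(w-\bar w)^2 + \bar w^2$, whereas you integrate $(w-\bar w)^2 = \bar w^2$ directly over the vanishing set $\Omega^c\cap B_{r_2}$; these are cosmetic variants of the same observation.
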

	\begin{proof}
	The proof follows from Lemma \ref{p:holder} and the simple observation that for a function  $w$ vanishing on a fixed fraction of $B_\rho$, the \(L^2\) norm and the variance are comparable. Namely  there exists a constant $c=c(\delta)$  such that
		\begin{equation*}
			\frac{1}{c}\int_{B_\rho}\left(w-\fint_{B_{\rho}}w\right)^2\leq\int_{B_\rho}w^2\leq c\int_{B_\rho}\left(w-\fint_{B_{\rho}}w\right)^2.
		\end{equation*}
		Indeed, the first inequality is true for every $w$ with $c=1$. For the second one note that 
		\begin{equation*}
			\int_{B_\rho}\left(w-\fint_{B_{\rho}}w\right)^2=\int_{B_\rho}w^2-\vert B_\rho\vert \left(\fint_{B_{\rho}}w\right)^2.
		\end{equation*}
		Hence we need to  estimate $\left(\fint_{B_{\rho}}w\right)^2$ in terms of $\int_{B_\rho}w^2$. Since  $w$ is non-zero only inside $\Omega$, using H\"older inequality, we obtain
		\[
		\left(\fint_{B_{\rho}}w\right)^2\le \left(\frac{\vert\Omega\cap B_\rho\vert}{\vert B_\rho\vert}\right)\fint_{B_{\rho}}w^2
				\leq(1-\delta)\fint_{B_{\rho}}w^2,
		\]
		hence
		\begin{equation*}
			\int_{B_\rho}\left(w-\fint_{B_{\rho}}w\right)^2\geq\delta\int_{B_\rho}w^2,
		\end{equation*}
		concluding the proof.
	\end{proof}
	
	To prove H\"{o}lder continuity of $u_j$  we will use  several times the following comparison estimates. 
	\begin{lemma}\label{varcap}Let \(u_j\) be the capacitary potential of  a minimizer for \eqref{pertpb}. Let $A\subset B_R$ be an open set with Lipschitz  boundary and  let $w\in W^{1,2}(\mathds{R}^n)$ coincide with $u_j$ on the boundary of $A$ in the sense of traces.

		Then
		$$\int_A{\vert\nabla u_j\vert^2}dx-\int_A{\vert\nabla w\vert^2}dx\leq \left(\frac{1}{\hat \eta}+C\sigma\right)\big| A\cap \left(\{u=1\}\Delta\{w=1\}\right)\big|.$$
		Moreover, if $u_j\leq w\leq 1$ in $A$, then
		$$\int_A{\vert\nabla u_j\vert^2}dx+\frac{\hat \eta}{2}\big|A\cap \left(\{u=1\}\Delta\{w=1\}\right)\big|\leq\int_A{\vert\nabla w\vert^2}dx,$$
		provided \(\sigma\le \sigma(R)\).
	\end{lemma}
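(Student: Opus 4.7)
My strategy is to use $w$ to build a competitor set $\tilde{\Omega}$ for the minimizer $\Omega_j$ of \eqref{pertpb} and then exploit the minimality. Since $w = u_j$ on $\partial A$ in the sense of traces, the glued function
\[
\tilde{u} := u_j\,\mathbb{1}_{A^c} + w\,\mathbb{1}_A
\]
lies in $W^{1,2}_0(B_R)$ (resp.\ $D^{1,2}(\mathds{R}^N)$). Setting $\tilde{\Omega} := (\Omega_j \setminus A) \cup (A \cap \{w \geq 1\})$, the function $\tilde{u}$ (possibly truncated to $[0,1]$, which only lowers the Dirichlet energy) is admissible for the capacity problem on $\tilde{\Omega}$, so
\[
\Capa_*(\tilde{\Omega}) \leq \int |\nabla \tilde{u}|^2 = \Capa_*(\Omega_j) - \int_A |\nabla u_j|^2 + \int_A |\nabla w|^2.
\]
By construction, $\tilde{\Omega}\,\Delta\,\Omega_j \subset A \cap (\{u_j = 1\}\,\Delta\,\{w = 1\})$.

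Comparing $\mathscr{C}^*_{\hat{\eta},j}(\Omega_j) \leq \mathscr{C}^*_{\hat{\eta},j}(\tilde{\Omega})$ and rearranging, the surviving terms give
\[
\int_A |\nabla u_j|^2 - \int_A |\nabla w|^2 \leq \bigl(f_{\hat{\eta}}(|\tilde{\Omega}|) - f_{\hat{\eta}}(|\Omega_j|)\bigr) + \sigma\,|\alpha_*(\tilde{\Omega}) - \alpha_*(\Omega_j)|,
\]
where the asymmetry contribution uses that $t \mapsto \sqrt{\epsilon_j^2 + \sigma^2(t-\epsilon_j)^2}$ is $\sigma$-Lipschitz. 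For the first inequality in the lemma I now apply the upper bound in \eqref{e:feta} to get $f_{\hat{\eta}}(|\tilde{\Omega}|) - f_{\hat{\eta}}(|\Omega_j|) \leq \hat{\eta}^{-1}\,|\tilde{\Omega}\,\Delta\,\Omega_j|$, and Lemma \ref{propasym}\ref{asymlip} to get $|\alpha_*(\tilde{\Omega}) - \alpha_*(\Omega_j)| \leq C(R)\,|\tilde{\Omega}\,\Delta\,\Omega_j|$, which combine into the factor $(1/\hat{\eta} + C\sigma)$ in front of $|A \cap (\{u_j = 1\}\,\Delta\,\{w = 1\})|$.

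For the second (monotone) inequality, the hypothesis $u_j \leq w \leq 1$ ensures $\Omega_j \cap A \subset \tilde{\Omega} \cap A$, so in particular $|\tilde{\Omega}| \geq |\Omega_j|$ and $|\tilde{\Omega}\,\Delta\,\Omega_j| = |\tilde{\Omega}| - |\Omega_j|$. This lets me apply the \emph{lower} bound in \eqref{e:feta} to obtain $f_{\hat{\eta}}(|\Omega_j|) - f_{\hat{\eta}}(|\tilde{\Omega}|) \geq \hat{\eta}\,|\tilde{\Omega}\,\Delta\,\Omega_j|$, so this term now has a favourable sign. Adding it to the asymmetry loss bounded by $C(R)\sigma\,|\tilde{\Omega}\,\Delta\,\Omega_j|$ and choosing $\sigma \leq \sigma(R)$ small enough so that $C(R)\sigma \leq \hat{\eta}/2$, the net coefficient is at most $-\hat{\eta}/2$, delivering the claimed inequality.

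The main (minor) technical point is the construction of $\tilde{\Omega}$ and the verification that its capacity is controlled by $\int|\nabla\tilde{u}|^2$: one must handle the possibility that $w$ escapes $[0,1]$ by truncation, and identify $\{w\geq 1\}$ with $\{w = 1\}$ inside the competitor in the quasi-open set sense. Once this is set up, the remainder is a routine combination of the minimality of $\Omega_j$, \eqref{e:feta}, and Lemma \ref{propasym}\ref{asymlip}.
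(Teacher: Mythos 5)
Your proposal follows the same route as the paper: glue $w$ onto $u_j$ inside $A$ to get a competitor potential $\tilde u$, take $\tilde\Omega$ to be the level set where this potential equals $1$, and use the minimality of $\Omega_j$ for $\mathscr{C}^*_{\hat\eta,j}$ together with the two-sided Lipschitz bounds \eqref{e:feta}, the $\sigma$-Lipschitz continuity of $t\mapsto\sqrt{\epsilon_j^2+\sigma^2(t-\epsilon_j)^2}$, and Lemma~\ref{propasym}\ref{asymlip} to control the non-Dirichlet terms by $|\tilde\Omega\Delta\Omega_j|\le|A\cap(\{u_j=1\}\Delta\{w=1\})|$, with the lower bound in \eqref{e:feta} turned to advantage in the monotone case. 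This matches the paper's argument; your extra care about truncating $w$ and identifying $\{w\geq 1\}$ with $\{w=1\}$ is a harmless technical refinement of the same idea.
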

	\begin{proof}
		We prove the result for the relative capacity. The case of the capacity in $\mathds{R}^N$ 
		can be treated in the same way. Since \(u_j\) is fixed we drop the subscript \(j\). Consider $\tilde{u}$ defined as
		$$
			\begin{cases}
				\tilde{u}=w\qquad&\text{ in }A\\
				\tilde{u}=u&\text{ else.}
			\end{cases}.
		$$
		Take $\tilde{\Omega}=\{\tilde{u}=1\}$ as a comparison domain.
		Since $\Omega$ is minimizing, we can write
		\begin{equation*}
		\begin{aligned}
			\cp{B_R}{u}&+f_{\hat{\eta}}(\Omega)+\sqrt{\epsilon_j^2+\sigma^2(\alpha_R(\Omega)-\epsilon_j)^2}
			=\mathscr{C}^R_{\hat{\eta},j}(\Omega)\\
			&\leq\mathscr{C}^R_{\hat{\eta},j}(\tilde{\Omega})
			\leq\cp{B_R}{\tilde{u}}+f_{\hat{\eta}}(\tilde{\Omega})+\sqrt{\epsilon_j^2+\sigma^2(\alpha(\tilde{\Omega})-\epsilon_j)^2}.
		\end{aligned}
		\end{equation*}
Hence, by Lemma \ref{propasym}, \ref{asymlip} and \eqref{e:feta}.
		\begin{equation*}
		\begin{aligned}
			\cp{A}{u}-\cp{A}{w}
			\leq\vert f_{\hat{\eta}}(\Omega)-f_{\hat{\eta}}(\tilde{\Omega})\vert
			+C\sigma\vert\Omega\Delta\tilde{\Omega}\vert
			\leq(\frac{1}{\hat\eta}+C\sigma)\vert\Omega\Delta\tilde{\Omega}\vert.
		\end{aligned}
		\end{equation*}
		To prove the second inequality we observe that  $u\leq w\leq 1$ implies $\{u=1\}\subset\{\tilde{u}=1\}$, i.e. \(\Omega\subset \tilde \Omega\). Hence, by \eqref{e:feta}:
		\[
		 \cp{A}{u}-\cp{A}{w}
			\leq -f_{\hat{\eta}}(\Omega)+f_{\hat{\eta}}(\tilde{\Omega})
			+C\sigma\vert\Omega\Delta\tilde{\Omega}\vert
			\leq -\hat\eta|\tilde \Omega\setminus  \Omega|+C\sigma|\tilde \Omega\setminus  \Omega|,
		\]
		from which the inequality follows choosing \(\sigma\) small enough.
	\end{proof}	
	\begin{rem}\label{varcapharm}
		Note that if $w$ is harmonic in $A$, then 
		$$\cp{A}{u}-\cp{A}{w}=\cp{A}{(u-w)},$$
		meaning that the first inequality from the lemma becomes
		\begin{equation}\label{e:harmonic}
		\cp{A}{(u-w)}\leq \left(\frac{1}{\hat \eta}+C\sigma\right)\big| A\cap\{u=1\}\Delta\{w=1\}\big|.
		\end{equation}
	\end{rem}

Let us also recall the following technical result
	\begin{lemma}[ [Lemma 5.13 in \cite{GM}] \label{techlemmagrowth}
		Let $\phi:\mathds{R}^+\rightarrow\mathds{R}^+$ be a non-decreasing function satisfying
		$$\phi(\rho)\leq A\left[\left(\frac{\rho}{R}\right)^\alpha+\epsilon\right]\phi(R)+BR^\beta,$$
		for some $A,\alpha,\beta>0$, with $\alpha>\beta$ and for all $0<\rho\leq R\leq R_0$, where $R_0>0$ is given. 
		Then there exist constants $\epsilon_0=\epsilon_0(A,\alpha,\beta)$ and $c=c(A,\alpha,\beta)$ 
		such that if $\epsilon\leq\epsilon_0$, we have
		$$\phi(\rho)\leq c\left[\frac{\phi(R)}{R^\beta}+B\right]\rho^\beta$$
		for all $0\leq\rho\leq R\leq R_0$.
	\end{lemma}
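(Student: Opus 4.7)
The plan is to prove this by a standard dyadic iteration argument, in the spirit of Campanato-type decay estimates. The inequality in the hypothesis couples $\phi$ at two scales, and the condition $\alpha>\beta$ gives us room to choose a fixed ratio $\tau\in(0,1)$ such that the self-improvement factor $A\tau^\alpha$ beats $\tau^\beta$. The perturbation $\epsilon$ then only needs to be small enough to not spoil this, which is where the smallness threshold $\epsilon_0$ comes from.

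More precisely, first I would pick $\tau=\tau(A,\alpha,\beta)\in(0,1)$ so small that $2A\tau^\alpha\leq \tau^\beta$, which is possible because $\alpha>\beta$. Then I would choose $\epsilon_0=\epsilon_0(A,\alpha,\beta)$ so that $A\epsilon_0\leq\tfrac12\tau^\beta$. Applying the hypothesis with $\rho=\tau R$ gives
\begin{equation*}
\phi(\tau R)\leq \tau^\beta\,\phi(R)+B R^\beta\qquad \text{for all } R\leq R_0.
\end{equation*}
Iterating this at scales $R_k=\tau^k R$ and using that $\tau^{(k-1)\beta}BR^\beta=B R_{k-1}^\beta$, a straightforward induction (or a geometric-series summation, noting $\sum_j \tau^{j\beta}<\infty$) yields
\begin{equation*}
\phi(\tau^k R)\leq \tau^{k\beta}\,\phi(R)+c(A,\alpha,\beta)\,B\,(\tau^k R)^\beta,
\end{equation*}
after factoring out $\tau^{k\beta}$ from the accumulated error.

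Finally, for arbitrary $0<\rho\leq R\leq R_0$, I would pick the unique $k\geq 0$ with $\tau^{k+1}R<\rho\leq \tau^k R$. Monotonicity of $\phi$ gives $\phi(\rho)\leq \phi(\tau^k R)$, and the estimate above together with $\tau^k\leq \tau^{-1}(\rho/R)$ gives
\begin{equation*}
\phi(\rho)\leq c\,\tau^{-\beta}\Bigl[\frac{\phi(R)}{R^\beta}+B\Bigr]\rho^\beta,
\end{equation*}
which is the claim after relabelling the constant. I don't really expect a serious obstacle here: the only delicate point is making the dependence of $\epsilon_0$ only on $A,\alpha,\beta$ explicit, which is automatic from the construction since $\tau$ and $\epsilon_0$ are chosen purely from $A$, $\alpha$, $\beta$, independently of $B$, $R_0$, and $\phi$.
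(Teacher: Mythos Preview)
The paper does not prove this lemma; it is quoted verbatim from \cite{GM} and used as a black box. Your dyadic iteration is exactly the standard argument and is in spirit correct, but there is a small slip in the choice of the contraction exponent that makes the ``geometric-series summation'' claim fail as written.

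With your choice $2A\tau^\alpha\leq\tau^\beta$ and $A\epsilon_0\leq\tfrac12\tau^\beta$, the one-step estimate reads $\phi(\tau R)\leq \tau^\beta\phi(R)+BR^\beta$. Iterating gives
\[
\phi(\tau^k R)\leq \tau^{k\beta}\phi(R)+B\sum_{j=1}^{k}\tau^{(k-j)\beta}(\tau^{j-1}R)^\beta
=\tau^{k\beta}\phi(R)+kBR^\beta\tau^{(k-1)\beta},
\]
since every term in the sum equals $\tau^{(k-1)\beta}$. The error is \emph{not} a geometric series: it grows like $k(\tau^k R)^\beta$, and after passing to a general $\rho$ you would get a spurious logarithmic factor $\log(R/\rho)$ in front of $B\rho^\beta$.

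The standard fix is to insert an intermediate exponent $\gamma\in(\beta,\alpha)$: choose $\tau$ so that $2A\tau^\alpha\leq\tau^\gamma$ and $\epsilon_0$ so that $A\epsilon_0\leq\tfrac12\tau^\gamma$. Then the one-step estimate becomes $\phi(\tau R)\leq\tau^\gamma\phi(R)+BR^\beta$, and the iteration yields
\[
\phi(\tau^k R)\leq \tau^{k\gamma}\phi(R)+BR^\beta\,\tau^{(k-1)\beta}\sum_{i=0}^{k-1}\tau^{i(\gamma-\beta)}
\leq \tau^{k\beta}\phi(R)+\frac{B}{\tau^\beta(1-\tau^{\gamma-\beta})}(\tau^k R)^\beta,
\]
which is now a genuine geometric sum since $\gamma>\beta$. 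The rest of your argument (choosing $k$ with $\tau^{k+1}R<\rho\leq\tau^k R$ and using monotonicity) then goes through unchanged, and the constants still depend only on $A,\alpha,\beta$.
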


	\begin{lemma} \label{holder_continuity} There exists \(\alpha\in (0,1/2)\) such that every minimizer of \eqref{pertpb} staisfies 
		$u_j\in C^{0,\alpha}(\overline{B_R})$. Moreover, the H\"older norm is bounded by a constant independent  on $j$.
	\end{lemma}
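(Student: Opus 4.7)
The plan is to follow a standard Campanato-iteration scheme: I will establish a power decay of the $L^2$ mean-oscillation
\[
\phi(x_0, \rho) := \int_{B_\rho(x_0) \cap B_R}\bigl(u_j - (u_j)_{x_0,\rho}\bigr)^2\,dx,
\]
for $x_0 \in \overline{B_R}$, and then invoke Campanato's characterization of $C^{0,\alpha}(\overline{B_R})$. Since every constant I use depends only on $N$, $R$, $\hat{\eta}$ and $\sigma$, the resulting H\"older norm is automatically uniform in $j$.

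For an interior ball $B_r(x_0) \Subset B_R$, I compare $u_j$ with its harmonic replacement $w$ in $B_r(x_0)$ (extended by $u_j$ outside). Remark \ref{varcapharm} yields $\int_{B_r(x_0)}|\nabla(u_j - w)|^2 \leq C|B_r(x_0)| \leq C r^N$, and since $u_j - w \in H^1_0(B_r(x_0))$, the Poincar\'e inequality upgrades this to $\int_{B_r(x_0)}(u_j - w)^2 \leq C r^{N+2}$. Combining this energy bound with Lemma \ref{harmgrowth} applied to $w$ and the usual triangle-inequality manipulation to pass from the mean of $w$ to the mean of $u_j$, I obtain
\[
\phi(x_0,\rho) \leq C\bigl(\tfrac{\rho}{r}\bigr)^{N+2}\phi(x_0,r) + C r^{N+2}, \qquad 0 < \rho \leq r.
\]
Since $\phi(x_0,\cdot)$ is non-decreasing, Lemma \ref{techlemmagrowth} applies with leading exponent $\alpha = N+2$ and any $\beta = N + 2\alpha' < N+2$, yielding $\phi(x_0,\rho) \leq C\rho^{N+2\alpha'}$, i.e.\ interior H\"older continuity with any exponent $\alpha' < 1$.

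For a boundary point $x_0 \in \partial B_R$, I extend $u_j$ by $0$ outside $B_R$ (preserving continuity, since $u_j = 0$ on $\partial B_R$) and take $w$ harmonic in $B_r(x_0) \cap B_R$ with $w = u_j$ on $\partial B_r(x_0) \cap B_R$ and $w = 0$ on $\partial B_R \cap B_r(x_0)$, also extended by $0$ outside $B_R$. Applying Lemma \ref{varcap} with $A = B_r(x_0) \cap B_R$ still gives $\int(u_j - w)^2 \leq C r^{N+2}$. Because $\partial B_R$ is smooth, the uniform exterior density $|B_R^c \cap B_\rho(x_0)| / |B_\rho(x_0)| \geq \delta(N) > 0$ holds for every $\rho \leq R$, so Corollary \ref{growth_of_means_for_harmonic} applied to $w$ produces the decay $\int_{B_\rho}(w - (w)_\rho)^2 \leq C(\rho/r)^{N+\gamma}\int_{B_r}(w - (w)_r)^2$ with $\gamma = \gamma(N) > 0$. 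Re-running the iteration and Lemma \ref{techlemmagrowth} as above yields the boundary H\"older estimate with any $\beta < \min(2,\gamma)$.

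The main technical nuisance I anticipate is the intermediate regime $0 < \dist(x_0,\partial B_R) < r$, in which neither the clean interior nor the clean boundary setup applies directly. I will handle it by selecting the nearest boundary point $y_0 \in \partial B_R$, running the boundary estimate on $B_{2r}(y_0) \supset B_r(x_0)$, and then transferring the decay to $x_0$ via the triangle inequality --- a routine but slightly tedious covering argument. Assembling the interior and boundary decays gives a uniform $C^{0,\alpha}$ estimate on $\overline{B_R}$; taking $\alpha$ small enough (in particular some $\alpha \in (0,1/2)$) concludes the proof, the independence from $j$ being immediate since all the constants only involve $N$, $R$, $\hat{\eta}$, $\sigma$.
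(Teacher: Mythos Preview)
Your proposal is correct and follows essentially the same approach as the paper: Campanato iteration based on comparison with the harmonic replacement, the energy gap bounded via Remark \ref{varcapharm} and Poincar\'e, decay for the harmonic part from Lemma \ref{harmgrowth} (interior) and Corollary \ref{growth_of_means_for_harmonic} (boundary), and the intermediate regime handled by passing to the nearest boundary point. The only cosmetic differences are that the paper extends $u_j$ by zero outside $B_R$ and works with full balls rather than $B_\rho(x_0)\cap B_R$, and it runs the interior step with the same exponent $N+\gamma$ as the boundary step rather than $N+2$; neither affects the argument.
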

	\begin{proof}
	 Let us extend $u_j$ by $0$ outside of $B_R$. As usual, we drop the subscript $j$.
By Camapanato's criterion it is enough to show that 
\[
\phi(r):=\int_{B_r(x_0)}{\Big\vert u-\fint_{B_r(x_0)}{u}\Big\vert^2}\le Cr^{2\alpha}
\]		
for all \(r\) small enough (say less that \(1/2\)).

\noindent
\textbf{Step 1: estimates on the boundary.}
		Let $x_0\in \partial B_R$.	Let  $w$ be  the harmonic extension of \(u\) in $B_{r'}(x_0)\cap B_R$ . By Corollary \ref{growth_of_means_for_harmonic} we know that 
		\begin{equation*}
			\int_{B_r(x_0)}{\left\vert w-\fint_{B_r(x_0)}{w}\right\vert^2}\leq C\left(\frac{r}{r'}\right)^{N+\gamma}\int_{B_{r'}(x_0)}{\left\vert w-\fint_{B_{r'}(x_0)}{w}\right\vert^2}
		\end{equation*} 
		for some $\gamma>0$. Let   $g:=u-w$. Then
		\begin{equation*}
			\begin{aligned}
				\int_{B_r(x_0)}{\left\vert u-\fint_{B_r(x_0)}{u}\right\vert^2}dx
				&\leq 2\int_{B_r(x_0)}{\left\vert w(x)-\fint_{B_r(x_0)}{w}\right\vert^2}dx+2\int_{B_r(x_0)}{\left\vert g-\fint_{B_r(x_0)}{g}\right\vert^2}\\
				&\leq 2C\left(\frac{r}{r'}\right)^{N+\gamma}\int_{B_{r'}(x_0)}{\left\vert w-\fint_{B_r(x_0)}{w}\right\vert^2}dx+2\int_{B_{r'}(x_0)}{\left\vert g\right\vert^2}\\
				&\leq C\left(\frac{r}{r'}\right)^{N+\gamma}\int_{B_{r'}(x_0)}{\left\vert u-\fint_{B_r(x_0)}{u}\right\vert^2}dx+C\int_{B_{r'}(x_0)}{\vert g\vert^2}.
			\end{aligned}	
		\end{equation*}
To estimate  $\int_{B_{r'}(x_0)}{\vert g(x)\vert^2}dx$ we recall that  $g\in W_0^{1,2}(B_{r'}(x_0))$ and vanishes outside \(B_{r'}(x_0)\cap B_R\), hence by Poincar\'e's inequality and \eqref{e:harmonic}
		\begin{equation*}
			\int_{B_{r'}(x_0)}{\vert g\vert^2}\leq C(r')^2\int_{B_{r'}(x_0)\cap B_R}{\vert \nabla g\vert^2}\le C(r')^{N+2}.
		\end{equation*}
Combining the last two inequalities, we get
		\begin{equation*}
			\phi(r)\leq c\left(\frac{r}{r'}\right)^{N+\gamma}\phi(r')+C(r')^{N+2}.
		\end{equation*}
		Using Lemma \ref{techlemmagrowth} we obtain
		\begin{equation*}
			\phi(r)\leq c\left(\left(\frac{r}{r'}\right)^{N+\gamma}\phi(r')+Cr^{N+\gamma}\right)
		\end{equation*}
		for any $r<r'<1$. In particular,
		\begin{equation*}
			\phi(r)\leq c\left(\Vert u\Vert^2_{L^2(\mathds{R}^N)}+C\right)r^{N+\gamma}.
		\end{equation*}	
		
		\noindent
		\textbf{Step 2: estimates at the interior.}
		Assume that $x_0\in B_R$, $r<r'<\dist(x_0,\partial B_R)$, so that $B_r(x_0)\subset B_{r'}(x_0)\subset B_R$.
		Then one can proceed in the same way as in the previous step using Lemma  \ref{harmgrowth} 
		instead of Corollary \ref{growth_of_means_for_harmonic}.	 Hence  
		\begin{equation*}
			\phi(r)\leq C\left(\left(\frac{r}{r'}\right)^{N+\gamma}\phi(r')+Cr^{N+\gamma}\right)
		\end{equation*}
		for $r<r'<\dist(x_0,\partial B_R)$ and, in particular,
		\begin{equation}
			\phi(r)\leq c\left(\left(\frac{1}{\dist(x_0,\partial B_R)}\right)^{N+\gamma}\Vert u\Vert^2_{L^2(\mathds{R}^N)}+C\right)r^{N+\gamma}.
		\end{equation}	
		
		\noindent
		\textbf{Step 3: global estimates.} We now combine the previous steps, distinguishing several cases:
		
		\begin{itemize}
			\item $\dist(x_0, \partial B_R)>1/2$. By Step 2 
			\begin{equation*}
				\phi(r)\leq C\left(\Vert u\Vert^2_{L^2(\mathds{R}^N)}+C\right)r^{N+\gamma}.
			\end{equation*}	
		
			\item $r\le\rho:= \dist(x_0, \partial B_R)\le 1/2$.
			Let $y_0=R\frac{x_0}{\vert x_0\vert}$ be the intersection of the ray $[0,x_0)$ with $\partial B_R$. Then, using Step 2 and Step 1, we have
			\begin{equation*}
				\begin{aligned}
					\phi(r)&\leq C\left(\left(\frac{r}{\rho}\right)^{N+\gamma}\phi(\rho)+Cr^{N+\gamma}\right)
					\\
					&=C\left(\left(\frac{r}{\rho}\right)^{N+\gamma}\int_{B_{\rho}(x_0)}{\left\vert u-\fint_{B_{\rho}(x_0)}{u}\right\vert^2}+Cr^{N+\gamma}\right)\\
					&\leq C\left(\left(\frac{r}{\rho}\right)^{N+\gamma}\int_{B_{2\rho}(y_0)}{\left\vert u-\fint_{B_{2\rho}(y_0)}{u}\right\vert^2}+Cr^{N+\gamma}\right)\\
					&\leq C\left(\left(\frac{r}{\rho}\right)^{N+\gamma}(2\rho)^{N+\gamma}\int_{B_{1}(y_0)}{\left\vert u-\fint_{B_{1}(y_0)}{u}\right\vert^2}+Cr^{N+\gamma}\right)\\
					&\leq C\left(\Vert u\Vert^2_{L^2(\mathds{R}^N)}+C\right)r^{N+\gamma}.
				\end{aligned}	
			\end{equation*}	

			\item $\rho:=\dist(x_0, \partial B_R)\le r\le 1/2 $. Again we set $y_0$ to be the radial projection of $x_0$ onto $\partial B_R$.
			We use Step 1 and get
			\begin{equation*}
				\begin{aligned}
					\phi(r)&=\int_{B_{r}(x_0)}{\left\vert u-\fint_{B_{r}(x_0)}{u}\right\vert^2} 
					\leq\int_{B_{2r}(y_0)}{\left\vert u-\fint_{B_{2r}(y_0)}{u}\right\vert^2}\\
					&\leq C\left(r^{N+\gamma}\int_{B_{1}(y_0)}{\left\vert u-\fint_{B_{1}(y_0)}{u}\right\vert^2}+Cr^{N+\gamma}\right)\\
					&\leq C\left(\Vert u\Vert^2_{L^2(\mathds{R}^N)}+C\right)r^{N+\gamma}.
				\end{aligned}	
			\end{equation*}	
		\end{itemize}
		In conclusion,
		\[
		\phi(r)\le C\left(\Vert u\Vert^2_{L^2(\mathds{R}^N)}+C\right)r^{N+\gamma},
		\]
		which by Campanato criterion implies that \(u\in C^{\frac{\gamma}{2}}\). Note furthermore that  the dependence on $j$ is realized only by the $L^2$ norm of $u_j$ which is uniformly  bounded by $\sqrt{\vert B_R\vert}$.
	\end{proof}
	
	\subsubsection{Lipschitz continuity and density estimates on the boundary}
	
	We now prove two lemmas similar to those in Section 3 of \cite{AC}. These are obtained by adding or removing a small ball from  an optimizer of \eqref{pertpb}. Since our competitors are constrained to lie in \(B_R\) removing a ball is  not a problem. On the other hand adding might lead to a non admissible competitor. For the case of the relative capacity,  we use the   H\"older estimate of the previous section. Indeed it implies that there exists \(\rho_0=\rho_0(R)>0\) such that 
	\begin{equation}\label{e:inc}
	\Omega_j\subset B_{R-\rho_0}.
\end{equation}
	
	\begin{lemma}
		For $\kappa<1$ there is a constant $c=c(N,\kappa,R)$ such that if \(u_j\) is a minimizer for \eqref{pertpb} and  \(v_j=1-u_j\) satisfies 
		\begin{equation}\label{e:piccolo}
			\fint_{\partial B_r(x_0)}{v_j}\leq cr,
		\end{equation}
		then $v_j=0$ in $B_{\kappa r}(x_0)$. In the case of the relative capacity we assume \(r\le \rho_0\) where \(\rho_0\) is as in \eqref{e:inc}.
	\end{lemma}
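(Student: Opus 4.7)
The strategy follows the Alt--Caffarelli free-boundary scheme \cite{AC}: I test the minimality of $\Omega_j$ against the competitor obtained by adjoining the ball $B_{\kappa r}(x_0)$ to $\Omega_j$, and show that unless $v_j$ already vanishes on $B_{\kappa r}(x_0)$, the gain coming from the volume term $f_{\hat\eta}$ exceeds the cost in the Dirichlet energy. Throughout, $v_j := 1-u_j$ is non-negative, subharmonic in $B_R$, harmonic in $B_R \setminus \Omega_j$, continuous (by Lemma~\ref{holder_continuity}), and vanishes precisely on $\Omega_j$. In the relative case the assumption $r \le \rho_0$ ensures $B_r(x_0)\Subset B_R$, so that the enlarged set is admissible.

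The first step is a pointwise upper bound on $v_j$ in a slightly smaller ball. Let $h$ be the harmonic extension of $v_j\vert_{\partial B_r(x_0)}$ to $B_r(x_0)$. Subharmonicity yields $v_j \le h$ on $B_r(x_0)$; the mean-value property and the hypothesis yield $h(x_0) \le cr$; and Harnack's inequality for the non-negative harmonic function $h$ then gives
\[
\sup_{B_{\kappa' r}(x_0)} v_j \;\le\; \sup_{B_{\kappa' r}(x_0)} h \;\le\; C_1(N,\kappa)\, c\, r, \qquad \kappa' := \tfrac{1+\kappa}{2}.
\]

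The second step is the explicit competitor. Let $V$ denote the capacitary potential of $B_{\kappa r}(x_0)$ relative to $B_{\kappa' r}(x_0)$, extended by $0$ outside $B_{\kappa' r}(x_0)$, so that $V\equiv 1$ on $B_{\kappa r}(x_0)$, $V$ is harmonic in the annulus, and $V\equiv 0$ outside $B_{\kappa' r}(x_0)$. Setting $\tilde u_j := 1 - (1-V)v_j = u_j + V v_j$, one checks $u_j \le \tilde u_j \le 1$ and $\{\tilde u_j = 1\} = \Omega_j \cup B_{\kappa r}(x_0)$. The second estimate of Lemma~\ref{varcap} applied on $A = B_{\kappa' r}(x_0)$ gives
\[
\frac{\hat\eta}{2}\bigl\lvert B_{\kappa r}(x_0) \setminus \Omega_j \bigr\rvert \;\le\; \int_{B_{\kappa' r}(x_0)} \Bigl( \lvert\nabla((1-V)v_j)\rvert^2 - \lvert\nabla v_j\rvert^2 \Bigr) dx.
\]
Expanding $\lvert(1-V)\nabla v_j - v_j \nabla V\rvert^2 - \lvert\nabla v_j\rvert^2$ and integrating the cross term by parts as $-(1-V)\nabla(v_j^2)\cdot \nabla V = -\nabla((1-V)v_j^2)\cdot\nabla V - v_j^2\lvert\nabla V\rvert^2$, the $v_j^2\lvert\nabla V\rvert^2$ cancels, the singular negative Laplacian of $V$ supported on $\partial B_{\kappa r}(x_0)$ is annihilated by the factor $(1-V)v_j^2$ that vanishes there, and the only surviving term is the boundary contribution on $\partial B_{\kappa' r}(x_0)$. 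Combining with the Step-1 bound and the explicit estimate $\lvert\partial_\nu V\rvert \lesssim 1/r$ on $\partial B_{\kappa' r}(x_0)$ yields
\[
\bigl\lvert B_{\kappa r}(x_0) \setminus \Omega_j\bigr\rvert \;\le\; C_2(N,\kappa,R)\, c^2\, r^N.
\]

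The main obstacle is upgrading this quantitative measure bound to the exact vanishing $v_j \equiv 0$ on $B_{\kappa r}(x_0)$ required by the statement. By continuity of $v_j$, any failure of this vanishing would produce an open connected component of $\{v_j > 0\}$ inside $B_{\kappa r}(x_0)$; combining the measure bound above with the lower linear-growth estimate of the companion ``adding a ball'' comparison (the non-degeneracy lemma immediately following in the paper) and choosing $c = c(N,\kappa,R)$ sufficiently small forces this open component to be empty, so that $v_j\equiv 0$ on $B_{\kappa r}(x_0)$, as claimed. The delicate point in this final step is that the two opposite one-sided comparisons (removing and adding a ball) have to be balanced carefully, because individually each produces only a quantitative estimate rather than a rigid dichotomy.
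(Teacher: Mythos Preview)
Your Steps 1 and 2 are correct and yield the estimate $|B_{\kappa r}(x_0)\cap\{v_j>0\}|\le C_2(N,\kappa,R)\,c^2 r^N$ (and in fact, if you keep track, also $\int_{B_{\kappa r}}|\nabla v_j|^2\le C_2 c^2 r^N$). The gap is in your final paragraph: you never explain how a mere \emph{measure} bound forces $v_j\equiv 0$ on $B_{\kappa r}$. Your appeal to ``the non-degeneracy lemma immediately following'' does not work: that lemma (Lemma~\ref{lip}) gives the \emph{upper} linear growth of $v_j$ near the free boundary (i.e.\ the Lipschitz bound), not a lower one. The lower density estimate $|\{v_j>0\}\cap B_\rho(z)|\gtrsim\rho^N$ for $z\in\partial\Omega_j$ that you would need to contradict your smallness bound is precisely a consequence of the very lemma you are proving, so invoking it is circular. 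Your closing remark that ``individually each produces only a quantitative estimate rather than a rigid dichotomy'' is in fact wrong: the paper's argument for this lemma is entirely self-contained.

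The paper closes the argument by a different choice of competitor. Instead of the multiplicative cutoff $(1-V)v_j$, it uses $\min(v_j,w)$, where $w$ is harmonic in the annulus $B_{\sqrt\kappa r}\setminus B_{\kappa r}$, equal to $\bar v:=\sup_{B_{\sqrt\kappa r}}v_j$ on the outer sphere and to $0$ on $\overline{B_{\kappa r}}$. The point of this construction is that after the analogous energy comparison and integration by parts the boundary term lands on the \emph{inner} sphere $\partial B_{\kappa r}$, giving
\[
\int_{B_{\kappa r}}\Bigl(|\nabla v_j|^2+\tfrac{\hat\eta}{2}\mathbf 1_{\{v_j>0\}}\Bigr)\;\le\; C(N,\kappa)\,\frac{\bar v}{r}\int_{\partial B_{\kappa r}}v_j\,d\mathcal H^{N-1}.
\]
One then bounds the right-hand side back by the left-hand side via the trace inequality and Cauchy--Schwarz:
\[
\int_{\partial B_{\kappa r}}v_j\;\le\; C\Bigl(\frac1r\int_{B_{\kappa r}}v_j+\int_{B_{\kappa r}}|\nabla v_j|\Bigr)\;\le\; C(N,\kappa,R)\int_{B_{\kappa r}}\Bigl(|\nabla v_j|^2+\tfrac{\hat\eta}{2}\mathbf 1_{\{v_j>0\}}\Bigr).
\]
Since $\bar v/r$ can be made arbitrarily small by choosing $c$ small, this self-referential inequality forces the combined integral to vanish, hence $v_j\equiv 0$ on $B_{\kappa r}$. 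Your multiplicative cutoff places the boundary term on the \emph{outer} sphere $\partial B_{\kappa' r}$, and the trace inequality then only controls it by an integral over $B_{\kappa' r}$, so the loop does not close.
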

	\begin{proof}
		We drop the subscript $j$ for simplicity. We first check that $B_{\kappa r}(x_0)\subset B_R$. By our restriction on \(r\) this is clear in the case of the relative capacity. Let us show that this is the case also for the absolute capacity provided we choose \(c\) small enough (depending only on \(R\) and \(N\), \(\kappa\)). To prove this we use that $v$ cannot be too small outside of $B_R$. More precisely, by comparison
		principle we know that $$v(x)\geq v_{B_R}(x)=1-\frac{R^{N-2}}{\vert x\vert^{N-2}},$$ where $v_{B_R}$
		is the corresponding function for $B_R$.
		Suppose that $B_{\kappa r}(x_0) \backslash B_R\neq\emptyset$. 
		Then the part of $\partial B_r(x_0) \backslash B_R$  with the distance at least $\frac{1-\kappa}{2}r$ from the boundary of the ball $B_R$ has measure at least  $c(\kappa)r^{N-1}$. Then
		$$\fint_{\partial B_r(x_0)}v\geq c(\kappa)\left(1-\frac{R^{N-2}}{\left(R+\frac{1-\kappa}{2}r\right)^{N-2}}\right)\geq c(N,\kappa,R)r,$$ 
		in contradiction with \eqref{e:piccolo} if \(c\) is small enough depending on \(\kappa, N, R\).
		
		Now we turn to the proof of the lemma for both cases. Since \(x_0\) is fixed we  simply write \(B_r\) for \(B_r(x_0)\). The idea is to take as a variation a domain, defined by a function coinciding with $v$ everywhere outside $B_{\sqrt{\kappa}r}$ and being zero inside $B_{\kappa r}$.
		More precisely, define $w$ in $B_{\sqrt{\kappa}r}$ as the solution of
				\begin{equation*}
			\begin{cases}
				\Delta w=0\text{ in }B_{\sqrt{\kappa}r}\backslash B_{\kappa r}\\
				w=0\text{ in }B_{\kappa r}\\
				w=\overline{v}\text{ on }\partial B_{\sqrt{\kappa}r}
			\end{cases},
		\end{equation*}
		where $\overline{v}=\sup_{B_{\sqrt{\kappa}r}}{v}$.
		Note that since $v$ is subharmonic, $\overline{v}\leq c(N,\kappa)\fint_{\partial B_r}{v}$. Moreover, one easily estimates
		\begin{equation}\label{e:normal}
		\Big|\frac{\partial w}{\partial \nu}\Big|\le C(n,\kappa)\frac{\overline{v}}{r} \qquad\text{on \(\partial B_{\kappa r}\).}
		\end{equation}
		 Using the second inequality in Lemma \ref{varcap} with $A=B_{\sqrt{\kappa}r}$ and $\max(u,1-w)=1-\min(v,w)$ in the place of $w$, we get
		\begin{equation*}
			\int_{B_{\sqrt{\kappa}r}}{\vert\nabla v\vert^2}dx+
			\frac{\hat \eta}{2}\vert B_{\sqrt{\kappa}r}\cap\{v>0,w=0\}\vert
			\leq\int_{B_{\sqrt{\kappa}r}}{\vert\nabla\min(v,w)\vert^2}dx.
		\end{equation*}
		Using Cauchy-Schwarz inequality, we obtain
		\begin{equation*}
			\begin{aligned}
				&\int_{B_{\kappa r}}\left(\vert\nabla v\vert^2+\frac{\hat \eta}{2}1_{\{v>0\}}\right)dx
				\leq\int_{B_{\sqrt{\kappa}r}\setminus B_\kappa}\left(\vert\nabla\min(v,w)\vert^2-\vert\nabla v\vert^2\right)dx
				\\
				&\quad\leq 2\int_{(B_{\sqrt{\kappa}r}\backslash B_\kappa)\cap\{v>w\}}\left(\vert\nabla w\vert^2-\nabla v\cdot\nabla w\right)dx
				=-2\int_{B_{\sqrt{\kappa}r}\backslash B_\kappa}\nabla \max(v-w,0)\nabla wdx\\
				&\quad=2\int_{\partial B_{\kappa r}}v\frac{\partial w}{\partial \nu}d\mathcal{H}^{N-1}\leq c(N,\kappa)\frac{\overline{v}}{r}\int_{\partial B_{\kappa r}}v d\mathcal{H}^{N-1}.
			\end{aligned}	
		\end{equation*}
		where we have used \eqref{e:normal}.
		We will now  bound $\int_{\partial B_{\kappa r}}{v}d\mathcal{H}^{N-1}$ from above by a constant times the left-hand side. Since  $\frac{\overline{v}}{r}$ can be made as small as we wish, this will conclude the proof. In order to do that we use first the trace inequality, then Cauchy-Schwarz to get
		\begin{equation*}
			\begin{aligned}
				\int_{\partial B_{\kappa r}}{v}d\mathcal{H}^{N-1}&\leq c(N,\kappa)\left(\frac{1}{r}\int_{B_{\kappa r}}{v}dx+\int_{B_{\kappa r}}{\vert\nabla v\vert}dx\right)\\
				&\leq c(N,\kappa,R)\int_{B_{\kappa r}}\Bigl(\vert\nabla v\vert^2+\frac{\hat \eta}{2}1_{\{v>0\}}\Bigr)dx.
			\end{aligned}	
		\end{equation*}
		
	\end{proof}
	
	\begin{lemma} \label{lip}
		There exists $M=M(N,R)$  such that if \(u_j\) is a minimizer for \eqref{pertpb} and  \(v_j=1-u_j\) satisfies 
		\begin{equation*}
			\fint_{\partial B_r(x_0)}v_j d\mathcal{H}^{N-1}\geq Mr, 
		\end{equation*}
		then $v_j>0$ in $B_r(x_0)$.
	\end{lemma}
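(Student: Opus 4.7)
The plan is to argue by contradiction: suppose $\fint_{\partial B_r(x_0)} v_j\,d\mathcal H^{N-1} \ge Mr$ while $v_j(x_1)=0$ for some $x_1 \in B_r(x_0)$ (equivalently $x_1 \in \Omega_j$), and to show this forces $M \le M(N,R)$. The construction will dualise that of the previous lemma: instead of enlarging $\Omega_j$ by filling in a ball where $v_j$ is small, I will shrink $\Omega_j$ by removing $\Omega_j \cap B_r(x_0)$ via a harmonic replacement, exploiting that $v_j$ is large in mean on $\partial B_r(x_0)$.

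Let $h$ be the harmonic extension of $v_j|_{\partial B_r(x_0)}$ to $B_r(x_0)$. Since $v_j \ge 0$ is subharmonic, $h \ge v_j \ge 0$ in $B_r(x_0)$ and $h > 0$ there by the strong maximum principle. The associated competitor is $\tilde u_j = 1-h$ in $B_r(x_0)$ and $\tilde u_j = u_j$ outside, which corresponds to the set $\Omega_j \setminus B_r(x_0)$. Applying the first inequality of Lemma \ref{varcap} and the identity $\int_{B_r}(|\nabla v_j|^2 - |\nabla h|^2) = \int_{B_r}|\nabla(v_j-h)|^2$ (which follows from the harmonicity of $h$ and $v_j-h \in W^{1,2}_0(B_r(x_0))$) yields
\[
\int_{B_r(x_0)}|\nabla(v_j-h)|^2\,dx \le \Bigl(\tfrac{1}{\hat\eta} + C\sigma\Bigr)\,|\Omega_j \cap B_r(x_0)|.
\]
Combining with Poincar\'e's inequality and using $v_j \equiv 0$ on $\Omega_j$, this gives
\[
\int_{\Omega_j \cap B_r(x_0)} h^2\,dx \le C(N,R)\,r^2\,|\Omega_j \cap B_r(x_0)|.
\]

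On the other hand, $h$ is positive harmonic in $B_r(x_0)$ with $h(x_0) = \fint_{\partial B_r(x_0)} v_j \ge Mr$. The Poisson kernel representation yields the pointwise lower bound $h(y) \ge c(N)\,M\,(r-|y-x_0|)$ for $y \in B_r(x_0)$, so the previous integral inequality becomes
\[
c(N)^2\,M^2 \int_{\Omega_j \cap B_r(x_0)}(r-|y-x_0|)^2\,dy \le C(N,R)\,r^2\,|\Omega_j \cap B_r(x_0)|.
\]
The contradiction closes if one establishes the reverse estimate $\int_{\Omega_j \cap B_r(x_0)}(r-|y-x_0|)^2\,dy \ge c(N)\,r^2\,|\Omega_j \cap B_r(x_0)|$, from which $M^2 \le C(N,R)$ would follow at once.

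The reverse estimate will be extracted by invoking the previous lemma at $x_1$: letting $\rho_* := \sup\{\rho>0 : \fint_{\partial B_\rho(x_1)} v_j \le c\rho\}$, which is positive by continuity of $v_j$ at $x_1$ and $v_j(x_1)=0$, the previous lemma furnishes $B_{\kappa\rho_*}(x_1) \subset \Omega_j$. A scaling argument coupling this inclusion with the integral bound just derived will show $\rho_*$ to be comparable to $r - |x_1-x_0|$, thereby providing a positive-measure portion of $\Omega_j \cap B_r(x_0)$ at distance comparable to $r-|x_1-x_0|$ from $\partial B_r(x_0)$, which suffices. The main technical obstacle is precisely this coupling: extracting from the single hypothesis $v_j(x_1)=0$ a quantitative density bound via the interplay between the two lemmas, following the Alt--Caffarelli scheme; all other steps are standard harmonic-function estimates.
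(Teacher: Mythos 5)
The proposal follows the paper through the harmonic replacement and the estimate $\int_{B_r(x_0)}|\nabla(v_j-h)|^2 \le (\hat\eta^{-1}+C\sigma)|\Omega_j\cap B_r(x_0)|$, and correctly derives the Poisson lower bound $h(y)\ge c(N)M(r-|y-x_0|)$. At this point, however, you apply plain Poincar\'e, $\int_{B_r}g^2\le Cr^2\int_{B_r}|\nabla g|^2$, and this is where the argument breaks. Poincar\'e loses the boundary weight and leaves you with
\begin{equation*}
M^2\int_{\Omega_j\cap B_r(x_0)}(r-|y-x_0|)^2\,dy\;\le\;C(N,R)\,r^2\,|\Omega_j\cap B_r(x_0)|,
\end{equation*}
which is \emph{weaker} than the trivial bound $\int_{\Omega_j\cap B_r}(r-|y-x_0|)^2\le r^2|\Omega_j\cap B_r|$ once $M^2\ge C(N,R)$, so no contradiction follows. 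The ``reverse estimate'' you propose, $\int_{\Omega_j\cap B_r}(r-|y-x_0|)^2\,dy\ge c(N)r^2|\Omega_j\cap B_r|$, is simply false in general: if $\Omega_j\cap B_r(x_0)$ is concentrated in a thin shell near $\partial B_r(x_0)$ the left side is smaller by any desired factor. And at this point in the argument no density estimate is available (density estimates are a \emph{consequence} of this lemma, not an input), so you cannot rule that configuration out.

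The proposed fix via the previous lemma does not close the gap either. Knowing that $B_{\kappa\rho_*}(x_1)\subset\Omega_j$ for some $\rho_*>0$ only gives $\int_{\Omega_j\cap B_r}(r-|y-x_0|)^2\gtrsim\rho_*^{N+2}$, whereas $|\Omega_j\cap B_r|$ can be of order $r^N$; the claimed comparability $\rho_*\sim r-|x_1-x_0|$ is not established and, even if it were, would still leave a ratio $\rho_*^{N+2}/(r^2\cdot r^N)$ that you cannot control. What is actually needed is to reinstate the boundary weight on the left-hand side, and the correct tool is Hardy's inequality in the ball,
\begin{equation*}
\int_{B_r}\frac{g^2}{(r-|x|)^2}\,dx\;\le\;C(N)\int_{B_r}|\nabla g|^2\,dx,\qquad g\in W_0^{1,2}(B_r),
\end{equation*}
applied to $g=v_j-h$. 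Then on $\Omega_j\cap B_r(x_0)$, where $v_j=0$, the integrand is $h^2/(r-|y-x_0|)^2\ge c(N)^2M^2$, and the factor $(r-|y-x_0|)^2$ coming from the Poisson lower bound on $h$ is exactly cancelled, yielding $c(N)M^2\,|\Omega_j\cap B_r|\le C(N,R)\,|\Omega_j\cap B_r|$ and hence the desired contradiction for $M$ large. So the approach is right in spirit, but replacing Hardy by Poincar\'e is precisely the step that causes the proof to fail, and the compensating argument you sketch does not recover it.
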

	\begin{proof}
		Let us  drop the subscript $j$ as usual. As a comparison domain here we consider $\Omega\setminus B_r(x_0)$, note that it is a subset  of \(B_R\).More precisely, we define $w$ as the solution of
		\begin{equation*} \label{eqforwinBr}
			\begin{cases}
				\Delta w=0\text{ in }B_r(x_0)\\
				w=v\text{ on }\mathds R^N\setminus B_r(x_0).
			\end{cases}
		\end{equation*}
		We  use Lemma \ref{varcap} and Remark \ref{varcapharm} with $A=B_r$, $1-w$ as $w$ to deduce
		\begin{equation} \label{estvzerofrmblw}
			\int_{B_r(x_0)}{\vert\nabla (v-w)\vert^2}dx\leq\left(\frac{1}{\hat \eta}+C\sigma\right)\vert\{v=0\}\cap B_r(x_0)\vert.
		\end{equation}
		We now  estimate $\vert\{v=0\}\cap B_r\vert$ by the left-hand side. This can be done by arguing as in \cite[Lemma 3.2]{AC}. Here we present a slightly different proof \footnote{We warmly thank Jonas Hirsch for suggesting this proof.}. First we change coordinates so that \(x_0=0\). Then   by the representation formula
		\begin{equation}\label{e:harnack}
		w(x)\ge c(N)\frac{r-|x|}{r}\fint_{\partial B_r} v\ge c(N)(r-|x|)M.
		\end{equation}
		If we now apply  Hardy inequality, 
		\[
		\int_{B_r} \frac{g^2}{(r-|x|)^2}\le C(N)\int_{B_r} |\nabla g|^2\qquad g\in W^{1,2}_0(B_r),
		\]
		to the function \(g=v-w\) and we take into account \eqref{e:harnack} and \eqref{estvzerofrmblw}, we get
		\[
		\begin{split}
		c(N)M^2\vert\{v=0\}\cap B_r\vert&\le \int_{\{v=0\}\cap B_r}\frac{w^2}{(r-|x|)^2}\le \int_{ B_r}\frac{(w-v)^2}{(r-|x|)^2}
		\\
		&\le c(N)\int_{B_r} |\nabla (v-w)|^2 \le C(n,R) \vert\{v=0\}\cap B_r\vert,
		\end{split}
		\]
		which is impossible if \(M\) is large enough depending in \(N, R\) unless \(v>0\) almost everywhere in \(B_r\).

	\end{proof}
	
	As in Section 3 of \cite{AC} these two lemmas imply Lipschitz continuity of minimizers and density estimates on the boundary of minimizing domains. Note that we use here Lemma \ref{holder_continuity} as we need to apply the lemmas for the balls of all radii less or equal to some $\rho_0$, see \eqref{e:inc}.
	
	\begin{lemma}
		Let $v_j$ be as above, $\Omega_j=\{v_j=0\}$.
		Then $\Omega_j$ is open and there exist constants $C=C(N, R)$, $\rho_0=\rho_0(N,R)>0$ such that
		\begin{enumerate}[label=\textup{(\roman*)}]
			\item	for every $x\in B_R$
				\begin{equation*}
					\frac{1}{C}\dist(x,\Omega_j)\leq v_j\leq C \dist(x,\Omega_j);
				\end{equation*}
			\item	$v_j$ are equi-Lipschitz;
			\item	for every $x\in\partial\Omega_j$ and $r\leq\rho_0$
				\begin{equation*}
					\frac{1}{C}\leq\frac{\vert\Omega_j\cap B_r(x)\vert}{\vert B_r(x)\vert}\leq\left(1-\frac{1}{C}\right).
\end{equation*}					
		\end{enumerate} 
	\end{lemma}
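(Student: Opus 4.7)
The plan is to follow the Alt-Caffarelli blueprint of \cite[Section~3]{AC}, using the two preceding lemmas as the two non-degeneracy ingredients. Their contrapositives say, respectively, that whenever $v_j$ is not identically zero in $B_{\kappa r}(x_0)$ the spherical mean satisfies $\fint_{\partial B_r(x_0)}v_j\geq cr$, and whenever $v_j$ vanishes somewhere in $B_r(x_0)$ one has $\fint_{\partial B_r(x_0)}v_j\leq Mr$. These, combined with the fact that $v_j$ is harmonic in $\Omega_j^c$ and subharmonic on $B_R$ (since the capacitary potential $u_j$ is superharmonic), drive the whole argument.

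For item (i), the lower bound is obtained by picking $x\notin\Omega_j$ and setting $d=\dist(x,\Omega_j)>0$, so that $B_d(x)\subset\Omega_j^c$. For every $r<d$ the first contrapositive gives $\fint_{\partial B_r(x)}v_j\geq cr$, and the harmonic mean value property upgrades this to $v_j(x)=\fint_{\partial B_r(x)}v_j\geq cr$; letting $r\to d^-$ yields $v_j(x)\geq cd$. For the upper bound, pick a nearest point $y\in\Omega_j$ to $x$, so that $|x-y|=d$; since $v_j(y)=0$, the contrapositive of Lemma~\ref{lip} applied to $B_{2d}(y)$ gives $\fint_{\partial B_{2d}(y)}v_j\leq 2Md$, and using subharmonicity of $v_j$ together with $B_d(x)\subset B_{2d}(y)$ one obtains
\begin{equation*}
v_j(x)\leq\fint_{B_d(x)}v_j\leq 2^N\fint_{B_{2d}(y)}v_j\leq 2^N\fint_{\partial B_{2d}(y)}v_j\leq 2^{N+1}Md.
\end{equation*}
Item (ii) then follows at once: interior gradient estimates for harmonic functions combined with the upper bound in (i) give $|\nabla v_j(x)|\leq (C/d)\sup_{B_{d/2}(x)}v_j\leq C'$ on $\Omega_j^c$ (with $d=\dist(x,\Omega_j)$), the gradient vanishes on $\Omega_j$, and the Lipschitz property across $\partial\Omega_j$ is read off by splitting any segment at a point of $\Omega_j$ and using $v_j(\cdot)\leq C\dist(\cdot,\Omega_j)$ on each half.

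For item (iii), the upper density bound on $\Omega_j$ at $x\in\partial\Omega_j$ with $r\leq\rho_0$ is the easy half: $v_j\leq Cr$ on $B_r(x)$ by (i), while non-degeneracy at each scale $s\leq r$ gives $\fint_{\partial B_s(x)}v_j\geq cs$; integrating in $s$ yields $\int_{B_r(x)}v_j\geq c'r^{N+1}$, and since $v_j$ vanishes on $\Omega_j$ this forces $|\Omega_j^c\cap B_r(x)|\geq c''r^N$. The lower density bound on $\Omega_j$ is the delicate step and proceeds as in \cite[Lemma~3.4]{AC}: one compares $v_j$ with its harmonic replacement $w$ in $B_r(x)$, uses the first inequality of Lemma~\ref{varcap} together with Remark~\ref{varcapharm} to get $\int_{B_r(x)}|\nabla(v_j-w)|^2\leq C|\Omega_j\cap B_r(x)|$ and hence, by Poincar\'e, $\int_{B_r(x)}(v_j-w)^2\leq Cr^2|\Omega_j\cap B_r(x)|$, and finally balances this against a Harnack-type lower bound for $w$ (whose boundary mean is $\geq cr$ by non-degeneracy, so that $w\geq cr$ on a definite fraction of $B_r(x)$) to conclude. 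The openness of $\Omega_j$ is obtained by replacing $\Omega_j$ with the interior of $\{v_j=0\}$, which by the density estimates differs from the original quasi-open minimizer by a Lebesgue null set.

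The main technical obstacle is precisely this lower density estimate on $\Omega_j$: all the other items reduce, once the two preceding lemmas are in hand, to mean-value and gradient estimates for harmonic/subharmonic functions, whereas here one has to carefully match an energy bound on $v_j-w$ against a pointwise Harnack bound on $w$.
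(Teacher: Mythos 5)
Your overall structure is correct and matches the paper's intent: the paper's ``proof'' of this lemma is just the sentence ``As in Section 3 of \cite{AC} these two lemmas imply Lipschitz continuity of minimizers and density estimates on the boundary of minimizing domains.'' You fill in the details, and for items (i), (ii), and the upper density bound on $\Omega_j$ in (iii), your arguments are sound (mean value property plus the two non-degeneracy lemmas for (i); interior gradient estimates for harmonic functions plus (i) for (ii); integration of the non-degeneracy bound over spheres together with the Lipschitz upper bound for the upper density). The handling of openness via the density estimates and Lebesgue differentiation is also right.

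However, the sketch of the lower density bound on $\Omega_j$ — the one genuinely nontrivial point you single out — does not close as written. From $\int_{B_r(x)}|\nabla(v_j-w)|^2\leq C|\Omega_j\cap B_r(x)|$, Poincar\'e, and $w\geq cr$ on $B_{r/2}(x)$ you only obtain
\begin{equation*}
c^2 r^2\,|\Omega_j\cap B_{r/2}(x)|\leq\int_{\Omega_j\cap B_{r/2}(x)}w^2\leq\int_{B_r(x)}(v_j-w)^2\leq C r^2\,|\Omega_j\cap B_r(x)|,
\end{equation*}
i.e.\ $|\Omega_j\cap B_{r/2}(x)|\leq C|\Omega_j\cap B_r(x)|$, which is a propagation-of-density estimate, not a lower bound. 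The actual Alt--Caffarelli argument at this point is more intricate: one uses the nonnegative measure $\mu=\Delta v_j$ supported on $\partial\Omega_j$, the Green representation $w(x)=\int G_r(x,\cdot)\,d\mu$ (which converts the non-degeneracy $w(x)\geq cr$ into a lower bound $\mu(B_r(x))\geq c r^{N-1}$), and the corresponding upper Ahlfors bound coming from the Lipschitz estimate, before passing to the volume density. Since you explicitly defer to \cite{AC} for this step, and the paper does exactly the same, this is not a fatal flaw; but the step should be attributed to \cite[Lemma~3.7]{AC} (not Lemma~3.4, which is the Lipschitz estimate), and the ``balance against a Harnack bound'' phrasing should be replaced by the measure-theoretic argument. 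One last minor point: the first non-degeneracy lemma carries the restriction $r\leq\rho_0$ (needed so that the competitor stays inside $B_R$), which is why the paper invokes Lemma~\ref{holder_continuity} and why the statement of (iii) only claims the estimate for $r\leq\rho_0$; your use of it in (i) for all $r<\dist(x,\Omega_j)$ silently assumes this distance is $\leq\rho_0$, and for larger distances one should invoke a separate (elementary, maximum-principle) argument.
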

	
	Applying  \cite[Theorem 4.5]{AC} to \(v_j=(1-u_j)\) we also have the following
	\begin{lemma}\label{conq}
		Let $u_j$ be as above, then there exists a Borel function $q_{u_j}$ such that
		\begin{equation} \label{eqonq}
			\Delta u_j=q_{u_j}\mathcal{H}^{N-1}\mres\partial^*\Omega_j.
		\end{equation}
		Moreover, $0<c\leq -q_{u_j}\leq C$, \(c=c(n,R)\), \(C=C(n,R)\) and $\mathcal{H}^{N-1}(\partial\Omega_j\backslash\partial^*\Omega_j)=0$ .
	\end{lemma}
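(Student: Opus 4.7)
The plan is to apply \cite[Theorem 4.5]{AC} directly to $v_j:=1-u_j$, and simply translate the conclusion back to $u_j$ via $\Delta u_j = -\Delta v_j$. The first step is to record that $v_j\ge 0$, that $v_j$ is harmonic on $\{v_j>0\}=B_R\setminus\Omega_j$, and that $\Omega_j=\{v_j=0\}$. All the geometric hypotheses of the Alt--Caffarelli structure theorem are contained in the previous two lemmas: two-sided linear growth $c^{-1}\mathrm{dist}(x,\Omega_j)\le v_j(x)\le C\,\mathrm{dist}(x,\Omega_j)$, uniform Lipschitz continuity, and the uniform upper and lower density bounds on $\partial\Omega_j$ at scales up to $\rho_0$.

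The remaining ingredient required by \cite{AC} is an almost-minimality of the one-phase Bernoulli type, and this is exactly what Lemma \ref{varcap} provides. Namely, for any admissible competitor $w$ (lying in $B_R$ and agreeing with $v_j$ on $\partial A$), the first inequality in Lemma \ref{varcap} gives
\[
\int_A |\nabla v_j|^2 \,dx \le \int_A |\nabla w|^2 \,dx + \Bigl(\tfrac{1}{\hat\eta}+C\sigma\Bigr)\bigl|A\cap(\{v_j=0\}\triangle\{w=0\})\bigr|,
\]
while the second inequality (for $v_j\le w\le 1$) gives the complementary lower bound with constant $\hat\eta/2$. Together these say that $v_j$ minimizes the Alt--Caffarelli functional $\int|\nabla w|^2 + \lambda_\pm \mathbf{1}_{\{w>0\}}$ up to admissible additive constants $c\le \lambda_\pm\le C$ depending only on $N,R$.

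With these hypotheses in place, \cite[Theorem 4.5]{AC} yields a nonnegative Borel function $q_{v_j}$ supported on $\partial^*\Omega_j$ with $0<c\le q_{v_j}\le C$ (the bounds come directly from the two-sided linear growth of $v_j$ near $\partial\Omega_j$) and the identity
\[
\Delta v_j = q_{v_j}\,\mathcal{H}^{N-1}\mres \partial^*\Omega_j,
\]
together with the $\mathcal{H}^{N-1}$-negligibility of the non-reduced part $\partial\Omega_j\setminus\partial^*\Omega_j$. Setting $q_{u_j}:=-q_{v_j}$ and using $u_j=1-v_j$ gives the desired representation \eqref{eqonq} with $0<c\le -q_{u_j}\le C$, where the constants depend only on $N$ and $R$ via the constants appearing in the preceding density and linear-growth lemmas.

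The main (and essentially only) obstacle is the verification that the extra terms in our functional, namely $f_{\hat\eta}(|\Omega|)$ and the smoothed asymmetry term $\sqrt{\epsilon_j^2+\sigma^2(\alpha_*(\Omega)-\epsilon_j)^2}$, do not destroy the Alt--Caffarelli almost-minimality. This was precisely the reason for introducing the monotone penalization $f_{\hat\eta}$ and for choosing $\sigma$ small compared to $\hat\eta$ in Lemma \ref{varcap}: the $\sigma$ term is absorbed into the $\hat\eta$ term, so the effective lower bound on the volume penalty is positive. Once this is in place, the application of \cite[Theorem 4.5]{AC} is black-box.
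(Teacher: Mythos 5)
Your proposal is correct and coincides with the paper's own (very terse) argument: the paper simply states that the lemma follows from applying \cite[Theorem 4.5]{AC} to $v_j=1-u_j$, while you spell out how the two-sided linear growth, uniform Lipschitz bound, density estimates, and in particular the Alt--Caffarelli-type almost-minimality supplied by Lemma \ref{varcap} (with $\sigma$ small relative to $\hat\eta$) furnish the hypotheses of that theorem, and how the sign flip $\Delta u_j=-\Delta v_j$ yields $0<c\le -q_{u_j}\le C$.
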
	
	Since $\Omega_j$ converge to $B_1$ in $L^1$ by Lemma \ref{firstpropmin}, the density estimates also give us
	the following convergence of boundaries.
	\begin{lemma}\label{l:conv1}
	Let \(\Omega_j\) be minimizers of \eqref{pertpb}. Then:
		\begin{enumerate}[label=\textup{(\Alph*)}]
		\item For the capacity with respect to the ball $B_R$
			$$\partial\Omega_j\xrightarrow[j\to\infty]{}\partial B_1$$ in the Kuratowski sense.
			\item For the capacity in $\mathds{R}^N$ every limit point of $\Omega_j$ 
			with respect to $L^1$ convergence is the unit ball centered at some $x_\infty\in B_R$.
			Moreover, the convergence holds also in the Kuratowski sense.

		\end{enumerate}
	\end{lemma}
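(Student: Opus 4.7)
The statement follows by combining two ingredients already available: the $L^1$-convergence $1_{\Omega_j}\to 1_{B_1}$ in case (A) (resp.\ $1_{\Omega_j}\to 1_{B_1(x_\infty)}$ along subsequences in case (B)) established in Lemma \ref{firstpropmin}(iii), together with the uniform two-sided density estimate $\frac{1}{C}\leq \frac{|\Omega_j\cap B_r(x)|}{|B_r(x)|}\leq 1-\frac{1}{C}$ for $x\in\partial\Omega_j$ and $r\leq\rho_0$ proved in the preceding lemma. Recall that Kuratowski convergence of closed sets $F_j\to F$ is equivalent to: (i) every $x\in F$ is the limit of some $x_j\in F_j$; (ii) every limit point of sequences $x_j\in F_j$ lies in $F$.

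\textbf{Case (A).} For (i), fix $x\in\partial B_1$ and argue by contradiction: suppose that along some subsequence there exists $r>0$ with $B_r(x)\cap\partial\Omega_j=\emptyset$. Since $\Omega_j$ is open, each such ball is either entirely contained in $\Omega_j$ or disjoint from it. In either case one of $|B_r(x)\setminus B_1|$ or $|B_r(x)\cap B_1|$ is bounded above by $|\Omega_j\Delta B_1|\to 0$, contradicting $x\in\partial B_1$ which forces both quantities to be strictly positive. For (ii), let $x_j\in\partial\Omega_j$ with $x_j\to x$. For fixed $r\leq \rho_0$, apply the density bound at $x_j$ and pass to the limit using that $1_{\Omega_j}\to 1_{B_1}$ in $L^1$ and $1_{B_r(x_j)}\to 1_{B_r(x)}$ in $L^1$ to obtain
\[
\tfrac{1}{C}\leq \tfrac{|B_1\cap B_r(x)|}{|B_r(x)|}\leq 1-\tfrac{1}{C}\qquad\text{for every } r\leq\rho_0,
\]
which rules out $x$ being in the interior of $B_1$ or of $B_1^c$, forcing $x\in\partial B_1$.

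\textbf{Case (B) and main obstacle.} The equi-boundedness of the perimeters together with $\Omega_j\subset B_R$ yields $L^1$-precompactness of $\{\Omega_j\}$. By Lemma \ref{firstpropmin}(iii)(A), any $L^1$ limit of a subsequence is a unit ball $B_1(x_\infty)$ with $x_\infty\in B_R$; along such a subsequence the argument of Case (A) applies verbatim with $B_1$ replaced by $B_1(x_\infty)$, giving Kuratowski convergence of both the boundaries and the closures. The argument is essentially routine once Lemma \ref{firstpropmin} and the density estimates are in place; the only step that genuinely requires care is the passage to the limit in the density bound, which is straightforward since $|B_r(x_j)\Delta B_r(x)|\to 0$ whenever $x_j\to x$, so the two-sided density inequality is preserved in the limit.
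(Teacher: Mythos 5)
Your proof is correct and matches the paper's intended argument: the paper gives no details, merely remarking that the $L^1$-convergence of $\Omega_j$ to $B_1$ from Lemma \ref{firstpropmin} together with the uniform density estimates yields Kuratowski convergence, and your proposal supplies precisely those details (with the minor streamlining that direction (i) needs only $L^1$-convergence and openness of $\Omega_j$, not the density estimates themselves).
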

	\begin{cor} \label{closetoball}
	  In the setting of Lemma \ref{l:conv1}, for every $\delta>0$ there exists \(j_\delta\) such that for $j\ge j_\delta$	  \begin{enumerate}[label=\textup{(\Alph*)}]
			\item $B_{1-\delta}\subset\Omega_j\subset B_{1+\delta}$
		in the case of the relative capacity;
		\item $B_{1-\delta}(x_j)\subset\Omega_j\subset B_{1+\delta}(x_j)$ for some $x_j\in B_R$
		in the case of the capacity in $\mathds{R}^n$.
	
	  \end{enumerate}
	\end{cor}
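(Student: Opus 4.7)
The plan is to deduce the corollary directly from Lemma \ref{l:conv1} and from the regularity information established earlier in the section, in particular the fact that $B_R\setminus\Omega_j$ is open (since $v_j=1-u_j$ is continuous and vanishes precisely on $\Omega_j$).

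First, fix $\delta>0$ small enough that $1+\delta<R$ and consider case (A). Kuratowski convergence $\partial\Omega_j\to\partial B_1$ (in particular the upper semicontinuity part) implies that there exists $j_\delta$ such that for every $j\geq j_\delta$
\[
\partial\Omega_j\subset \{x\in B_R : 1-\delta<|x|<1+\delta\}.
\]
The set $B_R\setminus\partial\Omega_j$ then splits into two connected components disjoint from $\partial\Omega_j$, namely $B_{1-\delta}$ and $B_R\setminus\overline{B_{1+\delta}}$ (together with the annular piece that we will not need). Since $\Omega_j$ is closed in $B_R$ with open complement, each such connected component lies either entirely inside the interior $\Omega_j^\circ$ or entirely outside $\Omega_j$. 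The $L^1$ convergence $\mathbf{1}_{\Omega_j}\to\mathbf{1}_{B_1}$ provided by Lemma \ref{firstpropmin}(iii)(B) then selects the right alternative: $|\Omega_j\cap B_{1-\delta}|\to|B_{1-\delta}|>0$ forces $B_{1-\delta}\subset\Omega_j$, while $|\Omega_j\cap(B_R\setminus\overline{B_{1+\delta}})|\to 0$ forces $\Omega_j\cap(B_R\setminus\overline{B_{1+\delta}})=\emptyset$, that is $\Omega_j\subset\overline{B_{1+\delta}}\subset B_{1+2\delta}$ (up to replacing $\delta$ by $\delta/2$ throughout).

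For case (B), we cannot fix a single ball in advance because the capacity is translation invariant and different subsequences of $\Omega_j$ may converge to balls with different centers. The argument is by contradiction: if the statement fails, there exist $\delta_0>0$ and a subsequence $\Omega_{j_k}$ such that for every choice of $x\in B_R$ at least one of the inclusions $B_{1-\delta_0}(x)\subset\Omega_{j_k}\subset B_{1+\delta_0}(x)$ is violated. By Lemma \ref{l:conv1}(B) applied to $\Omega_{j_k}$ we extract a further subsequence converging in $L^1$ (and in Kuratowski sense for the boundaries) to a unit ball $B_1(x_\infty)$ centred at some $x_\infty\in B_R$. Repeating the argument of case (A) with $B_1$ replaced by $B_1(x_\infty)$ — the annulus around $\partial B_1(x_\infty)$ still splits $B_R$ into components and $L^1$ convergence still decides which component belongs to $\Omega_{j_k}$ — we obtain $B_{1-\delta_0}(x_\infty)\subset\Omega_{j_k}\subset B_{1+\delta_0}(x_\infty)$ for $k$ large, contradicting our assumption with the choice $x_{j_k}=x_\infty$.

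I do not expect any real obstacle: the only technical point is the use of connectedness of the two components of $B_R\setminus N_\delta(\partial B_1)$ together with the fact that $\Omega_j$ is (relatively) closed with open complement, so that $\Omega_j$ cannot split a connected component of $B_R\setminus\partial\Omega_j$. Once this topological observation is combined with $L^1$ convergence to identify the correct component, the statement follows immediately in case (A), and case (B) reduces to it via a standard subsequence argument.
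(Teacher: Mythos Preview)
Your argument is correct and is precisely the standard way to extract the corollary from Kuratowski convergence of the boundaries together with $L^1$ convergence of the sets; the paper itself gives no proof, treating the statement as an immediate consequence of Lemma~\ref{l:conv1}. One cosmetic remark: the paper actually asserts that $\Omega_j$ is \emph{open} (as part of the Alt--Caffarelli regularity), whereas you use that $\Omega_j=\{v_j=0\}$ is closed with open complement; both descriptions are compatible up to the (measure-zero) free boundary, and the connectedness argument you run only needs that a connected set disjoint from $\partial\Omega_j$ lies either in $\Omega_j^\circ$ or in $(\overline{\Omega_j})^c$, which holds in either reading.
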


	\subsection{Higher regularity of the free boundary}
	
	In order to address the higher regularity of $\partial\Omega_j$, we need to prove that $q_{u_j}$ is smooth.  This will  be done by using the Euler-Lagrange equations for our minimizing problem. 
	We defined $\Omega_j$ in such a way that the following minimizing property holds
	\begin{enumerate}[label=(\Alph*)] 
		\item 
			\begin{equation}\label{minineq_R}
			\begin{aligned}
				\cp{B_R}{u_j}+f_{\hat{\eta}}(\vert\{u_j=1\}\vert)+\sqrt{\epsilon_j^2+\sigma^2(\alpha_R(\{u_j=1\})-\epsilon_j)^2}\\
				\leq\cp{B_R}{u}+f_{\hat{\eta}}(\vert\{u=1\}\vert)+\sqrt{\epsilon_j^2+\sigma^2(\alpha_R(\{u=1\})-\epsilon_j)^2}
			\end{aligned}	
			\end{equation}	
			for any $u\in W^{1,2}_0(B_R)$ such that $0\leq u\leq 1$.
		\item
			\begin{equation}\label{minineq}
			\begin{aligned}
				\cp{\mathds{R}^N}{u_j}+f_{\hat{\eta}}(\vert\{u_j=1\}\vert)+\sqrt{\epsilon_j^2+\sigma^2(\alpha(\{u_j=1\})-\epsilon_j)^2}\\
				\leq\cp{\mathds{R}^N}{u}+f_{\hat{\eta}}(\vert\{u=1\}\vert)+\sqrt{\epsilon_j^2+\sigma^2(\alpha(\{u=1\})-\epsilon_j)^2}
			\end{aligned}	
			\end{equation}	
			for any $u\in W^{1,2}(\mathds{R}^N)$ such that $0\leq u\leq 1$, $\{u=1\}\subset B_R$.
	\end{enumerate}
	To write Euler-Lagrange equations for $u_j$, we need to have \eqref{minineq_R} or \eqref{minineq})respectively 
	for  $u_j\circ \Phi$  where \(\Phi\) is a diffeomorphism of \(\mathds R^N\) close to the identity. Note that to make sure that \(\{u_j\circ\Phi=1\}\) is contained in \(B_R\) one needs to know that \(\dist(u_j,\partial B_r)>0\).  This follows from Corollary \ref{closetoball}, up  translate \(\Omega_j\) in the case of  the absolute capacity  (note that in this case the problem is invariant by translation). More precisely we will get the following optimality condition
	
	\begin{enumerate}[label=(\Alph*)]
	\item
	\begin{equation*}\label{e:1}
		q_{u_j}^2-\frac{\sigma^2(\alpha_R(\Omega_j)-\epsilon_j)}{\sqrt{\epsilon_j^2+\sigma^2(\alpha_R(\Omega_j)-\epsilon_j)^2}}\vert x\vert=\Lambda_j;
	\end{equation*}
	\item
	\begin{equation*}\label{e:2}
	q_{u_j}^2-\frac{\sigma^2(\alpha(\Omega_j)-\epsilon_j)}{\sqrt{\epsilon_j^2+\sigma^2(\alpha(\Omega_j)-\epsilon_j)^2}}\left(\vert x-x_{\Omega_j}\vert-\left(\fint_{\Omega_j}\frac{y-x_{\Omega_j}}{\vert y-x_{\Omega_j}\vert}dy\right)\cdot x\right)
	=\Lambda_j
	\end{equation*}
	\end{enumerate}
	for some constant $\Lambda_j>0$. These equations are an immediate consequence of the following lemma whose proof is almost the same as \cite[Lemma 4.15]{fkstab} (which in turn is based on~\cite{AguileraAltCaffarelli86}). For this reason we only highlight the most relevant changes, referring the reader to \cite[Lemma 4.15]{fkstab} for more details.
	
	\begin{lemma}\label{l:EL} There exists $j_0$ such that for any $j\geq j_0$ and any two points $x_1$ and $x_2$
	in the reduced boundary of $\Omega_j$ the following equality holds:
	\begin{enumerate}[label=\textup{(\Alph*)}]
	\item
		\begin{equation*}
			q_{u_j}^2(x_1)-\frac{\sigma^2(\alpha_R(\Omega_j)-\epsilon_j)}{\sqrt{\epsilon_j^2+\sigma^2(\alpha_R(\Omega_j)-\epsilon_j)^2}}\vert x_1\vert
			=q_{u_j}^2(x_2)-\frac{\sigma^2(\alpha_R(\Omega_j)-\epsilon_j)}{\sqrt{\epsilon_j^2+\sigma^2(\alpha_R(\Omega_j)-\epsilon_j)^2}}\vert x_2\vert;
		\end{equation*}
	\item	
		\begin{equation*}
		\begin{aligned}
			q_{u_j}^2(x_1)&-\frac{\sigma^2(\alpha(\Omega_j)-\epsilon_j)}{\sqrt{\epsilon_j^2+\sigma^2(\alpha(\Omega_j)-\epsilon_j)^2}}\left(\vert x_1-x_{\Omega_j}\vert-\left(\fint_{\Omega_j}\frac{y-x_{\Omega_j}}{\vert y-x_{\Omega_j}\vert}dy\right)\cdot x_1\right)\\
			&=q_{u_j}^2(x_2)-\frac{\sigma^2(\alpha(\Omega_j)-\epsilon_j)}{\sqrt{\epsilon_j^2+\sigma^2(\alpha(\Omega_j)-\epsilon_j)^2}}\left(\vert x_2-x_{\Omega_j}\vert-\left(\fint_{\Omega_j}\frac{y-x_{\Omega_j}}{\vert y-x_{\Omega_j}\vert}dy\right)\cdot x_2\right).
		\end{aligned}	
		\end{equation*}

	\end{enumerate}
	\end{lemma}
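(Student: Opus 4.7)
The plan is to derive both identities by testing the minimality of $\Omega_j$ against diffeomorphic variations localized near $x_1$ and $x_2$, closely following \cite[Lemma 4.15]{fkstab}. Fix $r_1, r_2 > 0$ small and smooth vector fields $Y_i$ with $\mathrm{supp}\, Y_i \subset B_{r_i}(x_i)$ and $\int_{\partial^*\Omega_j}(Y_i \cdot \nu_{\Omega_j})\,d\mathcal{H}^{N-1} = 1$. Let $\Phi_{s,t}$ be the time-$1$ flow of $s Y_1 + t Y_2$ and set $\Omega_j^{s,t} := \Phi_{s,t}(\Omega_j)$. By Corollary \ref{closetoball} (up to translating $\Omega_j$, which is allowed in the absolute case by translation invariance), for $j$ large and $(s,t)$ small we have $\Omega_j^{s,t}\subset B_R$, so $u_j\circ\Phi_{s,t}^{-1}$ is an admissible competitor in \eqref{minineq_R}/\eqref{minineq}.

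Since $f_{\hat\eta}$ is only Lipschitz at $\omega_N$, I would restrict to volume-preserving variations: the implicit function theorem applied to $(s,t)\mapsto |\Omega_j^{s,t}|$ cuts out a smooth curve $\tau\mapsto(s(\tau),t(\tau))$ through the origin along which volume is constant, with $s'(0) = \theta$ and $t'(0) = -\theta$ for arbitrary $\theta\in\mathds{R}$ (by the flux normalization). Along this curve $f_{\hat\eta}$ is constant, and the square-root term is smooth because its argument is bounded below by $\epsilon_j^2 > 0$. Stationarity of $\mathscr{C}^*_{\hat\eta,j}$ therefore reads
\[
\frac{d}{d\tau}\Capa_*(\Omega_j^{s(\tau),t(\tau)})\Big|_{\tau=0} + A_j^*\,\frac{d}{d\tau}\alpha_*(\Omega_j^{s(\tau),t(\tau)})\Big|_{\tau=0} = 0,
\]
where $A_j^* := \sigma^2(\alpha_*(\Omega_j)-\epsilon_j)/\sqrt{\epsilon_j^2+\sigma^2(\alpha_*(\Omega_j)-\epsilon_j)^2}$. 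Hadamard's formula combined with Lemma \ref{conq} gives the first variation of the capacity along $Y_i$ as $-\int_{\partial^*\Omega_j} q_{u_j}^2\,(Y_i\cdot\nu_{\Omega_j})\,d\mathcal{H}^{N-1}$, which localizes to $-q_{u_j}^2(x_i)$ as $r_i\to 0$ (at Lebesgue points of $q_{u_j}^2$ on $\partial^*\Omega_j$).

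For the asymmetry I would use the elementary identities
\[
\alpha_R(\Omega) = \int_\Omega (|x|-1)\,dx + \text{const},\qquad \alpha(\Omega) = \int_\Omega |x - x_\Omega|\,dx - |\Omega| + \omega_N,
\]
which follow by splitting $\Omega\Delta B_1 = (\Omega\setminus B_1)\cup(B_1\setminus\Omega)$ and grouping terms. Differentiating the first identity and localizing produces the contribution $(|x_i|-1)$; in the absolute case one must additionally take into account that $\dot{x}_\Omega = |\Omega|^{-1}\int_{\partial^*\Omega}(y-x_\Omega)(Y\cdot\nu_\Omega)\,d\mathcal{H}^{N-1}$, and Fubini converts the resulting chain-rule term into $\bigl(\fint_{\Omega_j}\tfrac{y-x_{\Omega_j}}{|y-x_{\Omega_j}|}\,dy\bigr)\cdot(Y\cdot\nu_\Omega)$, which localizes to exactly the expression featured in the statement. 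Combining the two variations and letting $\theta$ vary forces the bracketed quantities at $x_1$ and $x_2$ to coincide; the additive constants ($-1$ in the relative case, $\omega_N$ in the absolute one) are identical at both points and get absorbed into $\Lambda_j$. The main technical obstacle is the careful book-keeping of the barycenter variation in the absolute case, together with the justification of the pointwise evaluation $q_{u_j}^2(x_i)$ via a Lebesgue-point argument, exactly as carried out in \cite{AguileraAltCaffarelli86} and \cite[Lemma 4.15]{fkstab}; the remaining steps are a direct transcription of that scheme.
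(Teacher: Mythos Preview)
Your proposal is correct and follows essentially the same strategy as the paper: a two-point localized domain variation that extracts the Euler--Lagrange condition on $\partial^*\Omega_j$. The paper frames this as a contradiction argument with an explicit two-bump diffeomorphism $\Phi_\tau^\rho$ (a radial bump pushing out at $x_1$ and an identical one pushing in at $x_2$); the matching profiles make the volume change $o(\tau)\rho^N+o_\tau(\rho^N)$, so the $f_{\hat\eta}$ term contributes only to the error, and one then lets $\rho\to0$ and reads off $q_{u_j}^2(x_i)$ from the half-plane blow-up at reduced boundary points. Your use of the implicit function theorem to obtain \emph{exactly} volume-preserving curves is a clean alternative that sidesteps the non-differentiability of $f_{\hat\eta}$ at $\omega_N$ altogether, and the rewriting $\alpha_R(\Omega)=\int_\Omega(|x|-1)\,dx+\text{const}$ (and its absolute analogue) is a tidy way to differentiate the asymmetry.

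One caveat worth flagging: the Hadamard boundary-integral formula you invoke for the first variation of the capacity is not a priori available here, since at this stage $\partial\Omega_j$ is only known to have finite perimeter and $u_j$ is only Lipschitz. The rigorous route---which is what \cite{AguileraAltCaffarelli86} and \cite[Lemma~4.15]{fkstab} actually do---is to compute $\int|\nabla(u_j\circ\Phi^{-1})|^2$ by change of variables as a \emph{volume} integral in $DX$ and $\nabla u_j$, and then let the support radius shrink and use the blow-up from \cite{AC} to evaluate the limit as $q_{u_j}^2(x_i)\int_{\{y_1=0\}\cap B_1}\phi$. You correctly identify this as the key technical step and the right references, so this is a matter of presentation rather than a gap. (Minor: the additive constant in your formula for $\alpha(\Omega)$ is $\int_{B_1}(1-|y|)\,dy$, not $\omega_N$, but this is irrelevant for the variation.)
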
	
	\begin{proof}
		We argue by contradiction. Assume there exist $x_1,x_2\in\partial^*\{u_j=1\}$ 
		such that
		\begin{enumerate}[label=(\Alph*)]
		\item
		\begin{equation}\label{contrEL}
			q_{u_j}^2(x_1)-\frac{\sigma^2(\alpha_R(\Omega_j)-\epsilon_j)}{\sqrt{\epsilon_j^2+\sigma^2(\alpha_R(\Omega_j)-\epsilon_j)^2}}\vert x_1\vert
			<q_{u_j}^2(x_2)-\frac{\sigma^2(\alpha_R(\Omega_j)-\epsilon_j)}{\sqrt{\epsilon_j^2+\sigma^2(\alpha_R(\Omega_j)-\epsilon_j)^2}}\vert x_2\vert;
		\end{equation}
		\item		
		\begin{equation}\label{contrEL1}
		\begin{aligned}
			q_{u_j}^2(x_1)&-\frac{\sigma^2(\alpha(\Omega_j)-\epsilon_j)}{\sqrt{\epsilon_j^2+\sigma^2(\alpha(\Omega_j)-\epsilon_j)^2}}\left(\vert x_1-x_{\Omega_j}\vert-\left(\fint_{\Omega_j}\frac{y-x_{\Omega_j}}{\vert y-x_{\Omega_j}\vert}dy\right)\cdot x_1\right)\\
			&<q_{u_j}^2(x_2)-\frac{\sigma^2(\alpha(\Omega_j)-\epsilon_j)}{\sqrt{\epsilon_j^2+\sigma^2(\alpha(\Omega_j)-\epsilon_j)^2}}\left(\vert x_2-x_{\Omega_j}\vert-\left(\fint_{\Omega_j}\frac{y-x_{\Omega_j}}{\vert y-x_{\Omega_j}\vert}dy\right)\cdot x_2\right).
		\end{aligned}	
		\end{equation}
		\end{enumerate}
		Using this inequalities, we are going to construct a variation contradicting \eqref{minineq_R}. We take a smooth radial symmetric function $\phi(x)=\phi(\vert x\vert)$ supported in $B_1$ and define the following diffeomorphism for small $\tau$ and $\rho$:
		\begin{equation*}
			\Phi_\tau^\rho(x)=
			\begin{cases}
				x+\tau\rho\phi(\vert\frac{x-x_1}{\rho}\vert)\nu(x_1), &x\in B_\rho(x_1),\\
				x-\tau\rho\phi(\vert\frac{x-x_2}{\rho}\vert)\nu(x_2), &x\in B_\rho(x_2),\\
				x, &\text{otherwise.}
			\end{cases}
		\end{equation*}
		We define the function $$u^\rho_\tau:=u\circ(\Phi_\tau^\rho)^{-1}$$ and we define a competitor domain $\Omega_\tau^\rho$ as the domain with $u_\tau^\rho$ for capacitary potential, i.e. $$\Omega_\tau^\rho:=\{u^\rho_\tau=1\}.$$
		
		Now we are going to show that for $\tau$ and $\rho$ small enough $\mathscr{C}^*_{\hat{\eta}}(\Omega_\tau^\rho)<\mathscr{C}^*_{\hat{\eta}}(\Omega)$. To do that, we first compute the variation of all the terms involved in $\mathscr{C}^*_{\hat{\eta}}$. 

\medskip		
\noindent
		\textbf{Volume.}
		By arguing as in  \cite[Lemma 4.15]{fkstab} one gets
		\begin{equation*}
		\begin{aligned}
			\vert\Omega_\tau^\rho\vert-\vert\Omega\vert
                         &=\tau\rho^N\left(\int_{\{y\cdot\nu(x_1)=0\}\cap B_1}{\phi\left(\left\vert y\right\vert\right)}
			-\int_{\{y\cdot\nu(x_2)=0\}\cap B_1}{\phi\left(\left\vert y\right\vert\right)}\right)
			+o(\tau)\rho^N+o_\tau(\rho^N)
			\\
			&=o(\tau)\rho^N+o_\tau(\rho^N),
		\end{aligned}
		\end{equation*}
where \(o_\tau(\rho^N)\rho^{-N}\) goes to zero as \(\rho \to 0\) and  \(o(\tau)\) is independent on \(\rho\).

\medskip		
\noindent
\textbf{Barycenter.}(for the case of the capacity in $\mathds{R}^N$). Assume that  that $x_\Omega=0$, as in \cite[Lemma 4.15]{fkstab} one gets,

			\[			x_{\Omega_\tau^\rho}=-\rho^N\tau\frac{x_1-x_2}{\vert\Omega\vert}\left(\int_{\{y_1=0\}\cap B_1}{\phi(\vert y\vert)}\right)+\rho^No(\tau)+o_\tau(\rho^N).
		\]
		
		\medskip
		\noindent
		\textbf{Asymmetry.}
		Again by the very same computations as in \cite[Lemma 4.15]{fkstab} one gets
		\begin{equation*}
		\alpha_R(\Omega_\tau^\rho)-\alpha_R(\Omega)
			=-\rho^N \tau\left(\int_{\{y_1=0\}\cap B_1}{\phi(\vert y\vert)}\right)(\vert x_1\vert-\vert x_2\vert)+o(\tau)\rho^N+o_\tau(\rho^N).
		\end{equation*}	
		In the case of asymmetry $\alpha(\Omega)$ we get an additional term:
		\begin{equation*}
		\alpha(\Omega_\tau^\rho)-\alpha(\Omega)
			=-\rho^N\tau\left(\int_{\{y_1=0\}\cap  B_1}{\phi(\vert y\vert)}\right)\Big(\vert x_1\vert-\vert x_2\vert+\left(\fint_\Omega{\frac{y}{\vert y\vert}dy}\right)\cdot(x_1-x_2)\Big)+o(\tau)\rho^N+o_\tau(\rho^N).
		\end{equation*}.	
		\medskip
		\noindent
		
		\textbf{Dirichlet energy}. Again one can argue as in \cite[Lemma 4.15]{fkstab} to get 
		\begin{equation*}
			\Capa_*(\Omega_\tau^\rho)-\Capa_*(\Omega)\le \tau\rho^N\left(\vert q(x_1)\vert^2-\vert q(x_2)\vert^2\right)\int_{B_1\cap\{y_1=0\}}{\phi(\vert y\vert)}dy+o(\tau)\rho^N+o_\tau(\rho^N).
		\end{equation*}
Combining the above estimates one gets
		\begin{enumerate}[label=(\Alph*)] 
			\item
			\begin{equation*}
			\begin{aligned}
				&\left(\int_{B_1\cap\{y_1=0\}}{\phi(\vert y\vert)}dy\right)^{-1}\frac{\mathscr{C}^R_{\hat{\eta},j}(\Omega_\tau^\rho)-\mathscr{C}^R_{\hat{\eta},j}(\Omega)}{\rho^N}\\
				&=\tau\left(\vert q(x_1)\vert^2-\vert q(x_2)\vert^2-\frac{\sigma^2(\alpha_R(\Omega)-\epsilon_j)}{\sqrt{\epsilon_j^2+\sigma^2(\alpha_R(\Omega)-\epsilon_j)^2}}(\vert x_1\vert-\vert x_2\vert)\right)+o(\tau)+o_\tau(1);
			\end{aligned}	
			\end{equation*}
			\item
			\begin{equation*}
			\begin{aligned}
				&\left(\int_{B_1\cap\{y_1=0\}}{\phi(\vert y\vert)}dy\right)^{-1}\frac{\mathscr{C}_{\hat{\eta},j}(\Omega_\tau^\rho)-\mathscr{C}_{\hat{\eta},j}(\Omega)}{\rho^N}\\
				&=\tau\left(\vert q(x_1)\vert^2-\vert q(x_2)\vert^2
				-\frac{\sigma^2(\alpha(\Omega)-\epsilon_j)}{\sqrt{\epsilon_j^2+\sigma^2(\alpha(\Omega)-\epsilon_j)^2}}\left(\vert x_1\vert-\vert x_2\vert+\left(\fint_\Omega{\frac{y}{\vert y\vert}dy}\right)\cdot(x_1-x_2)\right)\right)\\
				&\qquad+o(\tau)+o_\tau(1).
			\end{aligned}	
			\end{equation*}
		\end{enumerate}
	According to \eqref{contrEL} and \eqref{contrEL1} the quantity in  parentheses is strictly negative. Thus, we get a contradiction with the minimality of $\Omega$ for \(\rho\) and $\tau$ small enough.
	\end{proof}
	
	\begin{lemma}[Smoothness of $q_u$]
		There exist constants $\delta=\delta(N,R)>0$, $j_0=j_0(N,R)$, $\sigma_0=\sigma_0(N,R)>0$
		such that for every $j\geq j_0$, $\sigma\leq\sigma_0$ the functions $q_{u_j}$ belong to $C^\infty(\mathcal{N}_\delta(\partial\Omega_j))$.
		
		Moreover, for every $k$ there exists a constant $C=C(k,N,R)$ such that
		\begin{equation*}
			\Vert q_{u_j}\Vert_{C^k(\mathcal{N}_\delta(\partial\Omega_j))}\leq C
		\end{equation*}
		for every $j\geq j_0$.
	\end{lemma}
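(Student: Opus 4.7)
The plan is to read the smoothness of $q_{u_j}$ directly off the Euler--Lagrange equation established in Lemma~\ref{l:EL}, which, combined with the uniform Lipschitz and density estimates already obtained, provides an explicit smooth extension to a tubular neighborhood of $\partial\Omega_j$.

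First, I would rewrite the two statements of Lemma~\ref{l:EL} as follows: there exists a (uniform in $j$) constant $\Lambda_j \ge 0$ such that on $\partial^*\Omega_j$ one has
\[
q_{u_j}^2(x) \;=\; \Lambda_j + \lambda_j\, h_j(x),
\]
where
\[
\lambda_j := \frac{\sigma^2(\alpha_*(\Omega_j)-\epsilon_j)}{\sqrt{\epsilon_j^2+\sigma^2(\alpha_*(\Omega_j)-\epsilon_j)^2}},
\]
and $h_j(x)=|x|$ in the relative case while $h_j(x) = |x-x_{\Omega_j}| - b_j\cdot x$ with $b_j = \fint_{\Omega_j} \frac{y-x_{\Omega_j}}{|y-x_{\Omega_j}|}\,dy$ in the absolute case.

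Next I would collect the uniform bounds that make the right-hand side smooth near $\partial\Omega_j$. By Lemma~\ref{firstpropmin}(i) we have $|\lambda_j|\le\sigma$, and $|b_j|\le 1$. By Corollary~\ref{closetoball}, for $j\ge j_0$ the boundary $\partial\Omega_j$ lies in a tubular neighborhood of $\partial B_1$ (respectively $\partial B_1(x_{\Omega_j})$), so in particular there is a universal $\delta=\delta(N,R)>0$ such that on the $2\delta$-neighborhood $\mathcal{N}_{2\delta}(\partial\Omega_j)$ the function $h_j$ is $C^\infty$ with $\|h_j\|_{C^k(\mathcal{N}_{2\delta}(\partial\Omega_j))}\le C(k,N,R)$ uniformly in $j$ (the only possible singularity is at $0$ or at $x_{\Omega_j}$, which is bounded away from $\partial\Omega_j$).

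Then I would control $\Lambda_j$. By Lemma~\ref{conq} we have $0<c_0\le -q_{u_j}\le C_0$ on $\partial^*\Omega_j$, both constants depending only on $N$ and $R$. Combined with $|\lambda_j h_j(x)|\le \sigma\, C(N,R)$, for $\sigma\le\sigma_0(N,R)$ sufficiently small one obtains
\[
\tfrac{1}{2}c_0^2 \;\le\; \Lambda_j \;\le\; 2C_0^2.
\]
This allows me to define the natural extension
\[
\tilde{q}_j(x) \;:=\; -\sqrt{\Lambda_j + \lambda_j\, h_j(x)} \qquad \text{on } \mathcal{N}_\delta(\partial\Omega_j),
\]
whose radicand is bounded below by $c_0^2/4$ after a further smallness of $\sigma_0$. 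Since $s\mapsto -\sqrt{s}$ is smooth on $[c_0^2/4,\infty)$, $\tilde q_j$ is $C^\infty$ on $\mathcal{N}_\delta(\partial\Omega_j)$, with $C^k$ norms bounded by $C(k,N,R)$ via the chain rule and the uniform bounds on $\Lambda_j$, $\lambda_j$, $b_j$, $h_j$ just collected. Because $\tilde q_j \equiv q_{u_j}$ on $\partial^*\Omega_j$ and $\mathcal{H}^{N-1}(\partial\Omega_j\setminus\partial^*\Omega_j)=0$ by Lemma~\ref{conq}, one may take $\tilde q_j$ as the representative of $q_{u_j}$ on $\mathcal{N}_\delta(\partial\Omega_j)$, yielding the conclusion.

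The only genuine point that requires care is the uniform lower bound on $\Lambda_j$: this is what forces the choice of $\sigma_0$ and ensures that the square root is applied to a function bounded away from zero, so that every derivative remains controlled independently of $j$. All other steps are routine given Lemmas~\ref{firstpropmin}, \ref{conq}, \ref{l:EL} and Corollary~\ref{closetoball}.
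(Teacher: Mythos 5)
Your proof is correct and follows essentially the same route as the paper: use the Euler--Lagrange identity of Lemma~\ref{l:EL} to write $q_{u_j}^2=\Lambda_j+\lambda_j h_j(x)$, invoke the uniform two-sided bound on $q_{u_j}$ from Lemma~\ref{conq} together with $|\lambda_j h_j|\le C(N,R)\sigma$ to deduce that $\Lambda_j$ (hence the radicand) is bounded away from zero uniformly in $j$ once $\sigma$ is small, and then read off smoothness and uniform $C^k$ bounds from the explicit square-root formula. You are a bit more explicit than the paper about the two points it leaves implicit --- that the singularity of $h_j$ (at $0$, resp.\ at $x_{\Omega_j}$) stays at uniform distance from $\partial\Omega_j$ by Corollary~\ref{closetoball}, and that the formula gives a genuine smooth extension to the tubular neighborhood rather than only a pointwise identity on $\partial^*\Omega_j$ --- but these are exactly the details the paper intends.
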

	\begin{proof}
	We would like to write an explicit formula for $q_{u_j}$ using Euler-Lagrange equations, namely
	\begin{enumerate}[label=(\Alph*)]
	\item
	\begin{equation}
		q_{u_j}=-\left(\frac{\sigma^2(\alpha_R(\Omega_j)-\epsilon_j)}{\sqrt{\epsilon_j^2+\sigma^2(\alpha_R(\Omega_j)-\epsilon_j)^2}}\vert x\vert+\Lambda_j\right)^\frac{1}{2};
	\end{equation}
	\item
	\begin{equation}
		q_{u_j}=-\left(\frac{\sigma^2(\alpha(\Omega_j)-\epsilon_j)}{\sqrt{\epsilon_j^2+\sigma^2(\alpha(\Omega_j)-\epsilon_j)^2}}\left(\vert x-x_{\Omega_j}\vert-\left(\fint_{\Omega_j}\frac{y-x_{\Omega_j}}{\vert y-x_{\Omega_j}\vert}dy\right)\cdot x\right)
	+\Lambda_j\right)^\frac{1}{2}.
	\end{equation}
	\end{enumerate}
	To do that, we need to show that the quantity in the parenthesis is bounded away from zero.
	Indeed, $q_{u_j}$ is bounded from above and below independently of $j$ and
	\begin{enumerate}[label=(\Alph*)]
	\item
	\begin{equation}
		\left\vert\frac{\sigma^2(\alpha_R(\Omega_j)-\epsilon_j)}{\sqrt{\epsilon_j^2+\sigma^2(\alpha_R(\Omega_j)-\epsilon_j)^2}}\vert x\vert\right\vert\leq C(N,R)\sigma;
	\end{equation}
	\item
	\begin{equation}
		\left\vert\frac{\sigma^2(\alpha(\Omega_j)-\epsilon_j)}{\sqrt{\epsilon_j^2+\sigma^2(\alpha(\Omega_j)-\epsilon_j)^2}}\left(\vert x-x_{\Omega_j}\vert-\left(\fint_{\Omega_j}\frac{y-x_{\Omega_j}}{\vert y-x_{\Omega_j}\vert}dy\right)\cdot x\right)
	\right\vert\leq C(N,R)\sigma.
	\end{equation}
	\end{enumerate}
	Then it follows from the Euler-Lagrange equations that also $\Lambda_j$ is bounded from above and below
	independently of $j$. Thus, for $\sigma$ small enough we can write the above-mentioned explicit formula for $q_{u_j}$ and get the conclusion of the lemma.
	\end{proof}

Now we are ready to apply the results of \cite{AC}. Indeed thanks to Lemma \ref{l:EL},  \(v_j=(1-u_j)\) is a weak solution of the free boundary problem   First, we need to recall the definition of flatness for the free boundary, see  \cite[Definition 7.1]{AC} (here it is applied to \(u=(1-v)\)).

	\begin{defin} 
		Let $\mu_-,\mu_+\in(0, 1]$. A weak solution $u$ of \eqref{eqonq} is said to be of class
$F(\mu_-,\mu_+,\infty)$ in $B_\rho(x_0)$ in a direction $\nu\in S^{N-1}$ if	$x_0\in\partial\{u=1\}$ and
				\[		
				\begin{cases}
					u(x)=1 \qquad&\text{ for }(x-x_0)\cdot\nu\leq-\mu_-\rho,\\
					1-u(x)\geq q_u(x_0)((x-x_0)\cdot\nu-\mu_+\rho) &\text{ for } (x-x_0)\cdot\nu\geq\mu_+\rho,\\
				\end{cases}
		\]
	\end{defin}
	
	We are going to use that flat free boundaries are  smooth (again  we apply \cite[Theorem 8.1]{AC} to \(v=(1-u)\))
	
	\begin{theorem}[Theorem 8.1 in \cite{AC}]\label{regflat}  Let $u$ be a weak solution of \eqref{eqonq}) and assume that $q_u$ is Lipschitz continuous. There are constants $\gamma,\mu_0,\kappa, C$ such that if $u$ is of class $F(\mu, 1, \infty)$ in $B_{4\rho}(x_0)$ in some direction $\nu\in S^{N-1}$ with $\mu\leq\mu_0$ and $\rho\leq\kappa\mu^2$, then there exists a $C^{1,\gamma}$ function $f:\mathds{R}^{N-1}\rightarrow\mathds{R}$ with $\Vert f\Vert_{C^{1,\gamma}}\leq C\mu$ such that
\begin{equation}
	\partial\{u=1\}\cap B_\rho(x_0)=(x_0+\graph_\nu f)\cap B_\rho(x_0),
\end{equation}
where $\graph_\nu f=\{x\in\mathds{R}^N:x\cdot\nu=f(x-x\cdot\nu)\nu)\}$.
Moreover if $q_u\in C^{k,\gamma}$ in some neighborhood of $\{u_j=1\}$, then $f\in C^{k+1,\gamma}$ and 
$\Vert f\Vert_{C^{k+1,\gamma}}\leq C(N,R,\Vert q_u\Vert_{C^{k,\gamma}})$.
	\end{theorem}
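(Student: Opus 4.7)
The plan is to follow the improvement-of-flatness scheme of Alt and Caffarelli. The key quantitative step is the dichotomy: there exist universal constants $\theta\in(0,1/2)$ and $\mu_0>0$ such that if $u$ is a weak solution of class $F(\mu,1,\infty)$ in $B_{4\rho}(x_0)$ with $\mu\le\mu_0$ and $\rho\le\kappa\mu^2$, then after replacing $\nu$ by a new direction $\nu'$ with $|\nu-\nu'|\le C\mu$ one has $u\in F(\mu/2,1,\infty)$ in $B_{\theta\rho}(x_0)$. Iterating at the geometric scales $\theta^k\rho$ forces the sequence of normals $\nu^{(k)}$ to converge at a geometric rate, and a standard bookkeeping argument turns the decay of the flatness parameters together with the H\"older-type decay of the normals into the conclusion that $\partial\{u=1\}\cap B_\rho(x_0)$ is the graph over $\nu^\perp$ of a $C^{1,\gamma}$ function $f$ with $\gamma:=\log(1/2)/\log\theta\in(0,1)$ and $\|f\|_{C^{1,\gamma}}\le C\mu$.

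To prove the improvement-of-flatness step I would argue by contradiction and linearization. Suppose it fails along sequences $\mu_k\to 0$ and solutions $u_k$ of class $F(\mu_k,1,\infty)$ in $B_{4\rho_k}(x_0^{(k)})$ in direction $\nu_k$. Normalize by setting
\[
v_k(y):=\frac{(1-u_k(x_0^{(k)}+\rho_k y))/|q_{u_k}(x_0^{(k)})|-(y\cdot\nu_k)_+}{\mu_k},\qquad y\in B_4.
\]
The flatness condition confines the graph of $v_k$ to a strip of vertical height $O(1)$, and the Lipschitz and non-degeneracy estimates from Lemma \ref{lip} and its consequences yield uniform H\"older bounds for $v_k$ up to the flattened free boundary. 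Extracting a subsequence produces a limit $v_\infty$ which is harmonic in the half-space $\{y\cdot\nu>0\}$ and, by passing to the limit in the boundary relation $|\nabla(1-u)|=|q_u|$, satisfies the linearized Neumann condition $\partial_\nu v_\infty=0$ on $\{y\cdot\nu=0\}$. Harmonic reflection then shows $v_\infty\in C^\infty$ up to the boundary and yields the second-order expansion $|v_\infty(y)-\ell\cdot y|\le C|y|^2$ in $B_{1/2}$ for some vector $\ell$ tangent to $\{y\cdot\nu=0\}$. Choosing $\theta$ with $2C\theta\le 1/2$ and using the uniform convergence $v_k\to v_\infty$, one rotates the normal by an angle $O(\mu_k)$ in the direction $\ell/|\ell|$ and contradicts the presumed failure of the flatness improvement at scale $\theta\rho_k$.

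For the higher regularity statement, once $\partial\{u=1\}\cap B_\rho(x_0)$ is a $C^{1,\gamma}$ graph I would invoke a hodograph-type bootstrap. The function $v:=1-u$ is harmonic in $\{v>0\}\cap B_\rho(x_0)$ and satisfies the overdetermined boundary conditions $v=0$ and $|\nabla v|=|q_u|$ on $\partial\{u=1\}$, a fact that follows from the Euler-Lagrange identity of Lemma \ref{l:EL} (upgraded to the pointwise condition along the $C^1$ boundary). Composing with the diffeomorphism that straightens the graph of $f$ turns the problem into a nonlinear elliptic equation in a half-ball with an oblique boundary condition whose coefficients inherit the $C^{k,\gamma}$ regularity of $q_u$. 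Schauder estimates of Kinderlehrer-Nirenberg type then give $f\in C^{k+1,\gamma}$ with $\|f\|_{C^{k+1,\gamma}}\le C(N,R,\|q_u\|_{C^{k,\gamma}})$.

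The main obstacle is the compactness argument underlying the flatness improvement: one must control the traces of $v_k$ on the sliding free boundary uniformly, pass to the limit both in the interior harmonicity and in the boundary gradient condition, and verify that each Taylor remainder — those coming from parametrizing the Lipschitz free boundary over $\nu^\perp$ and those coming from expanding $q_u$ around $x_0$ — is genuinely $o(\mu_k)$. The scaling hypothesis $\rho\le\kappa\mu^2$ enters precisely here: it balances the geometric error $O(\rho^2)$ against the linearization scale $\mu\rho$, ensuring that the limit $v_\infty$ solves the correct linear problem rather than being polluted by second-order terms.
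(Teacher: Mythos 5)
This statement is not proved in the paper at all: it is a verbatim recollection of Theorem~8.1 of Alt--Caffarelli~\cite{AC}, cited and used as a black box. There is therefore no ``paper's own proof'' to compare your attempt against; the authors simply refer the reader to~\cite{AC}.

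That said, a few remarks on your sketch as a proof of the underlying result. The high-level scheme (improvement of flatness at geometric scales, convergence of normals, then bootstrap via a hodograph transform and Kinderlehrer--Nirenberg Schauder theory) is correct, and the explanation of why $\rho\le\kappa\mu^2$ is needed --- to make the Lipschitz oscillation of $q_u$ subordinate to the linearization scale $\mu$ --- is essentially right. However, the specific mechanism you propose for the improvement-of-flatness step (normalize to $v_k=[(1-u_k)/|q_{u_k}|-(y\cdot\nu)_+]/\mu_k$, extract a limit, show it solves a Neumann problem in a half-space, and conclude by interior regularity of the linearized equation) is the compactness/viscosity argument popularized much later by De~Silva, not the one in Chapter~7 of~\cite{AC}. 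Alt and Caffarelli's original proof establishes flatness decay by a more direct quantitative argument built on non-degeneracy, measure estimates for $\{u=1\}$, a Green's-function representation, and explicit harmonic comparison functions, without passing to a linearized limit problem. Your route yields the same theorem and is arguably cleaner, but one would need to supply the missing compactness ingredients (uniform H\"older estimates for $v_k$ up to the free boundary, which in the one-phase setting require a partial boundary Harnack inequality) that in~\cite{AC} are replaced by the harder direct estimates. As a citation-level justification for the theorem as used in this paper, either route suffices; as an account of ``Theorem~8.1 in~\cite{AC}'', yours is a different proof.
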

	
	We are now ready to prove Theorem \ref{SelPr}, cp. \cite[Proposition 4.4]{fkstab}. 
	\begin{proof}[Proof of Theorem \ref{SelPr}]
		We define $\Omega_j$ as minimizers of \eqref{pertpb}. The desired sequence of Selection Principle
		will be properly rescaled $\{\Omega_j\}$.
		We need to show that $\{\Omega_j\}$ converges smoothly to the ball $B_1$. Indeed one then define 
		\[
		U_j=\lambda_j(\Omega_j-x_*),
		\]
		where \(x_*=0\) in the case of the relative capacity and \(x_*=x_{\Omega_j}\) in the case of the absolute capacity. Theorem \ref{firstpropmin} then implies all the desired properties of \(U_j\), compare with  \cite[Proof of Proposition 4.4]{fkstab}.
		
		Let $\mu_0$, $\kappa$ be as in Theorem \ref{regflat} and $\mu<\mu_0$ to be fixed later. Let $\overline{x}$ be some point on the boundary of $B_1$. As $\partial B_1$ is smooth, it lies inside a narrow strip in the neighborhood of $\overline{x}$. More precisely, there exists $\rho_0=\rho_0(\mu)\leq\kappa\mu^2$ such that for every $\rho<\rho_0$ and every $\overline{x}\in\partial B_1$
		$$\partial B_1\cap B_{5\rho}(\overline{x})\subset\{x: \vert(x-\overline{x})\cdot\nu_{\overline{x}}\vert\leq\mu\rho\}.$$
		
		We know that $\partial\Omega_j$ are converging to $\partial B_1$ in the sense of Kuratowski. Thus, 
		there exists a point $x_0\in\partial\Omega_j\cap B_{\mu\rho_0}(\overline{x})$ such that
		$$\partial \Omega_j\cap B_{4\rho_0}(x_0)\subset\{x: \vert(x-x_0)\cdot\nu_{\overline{x}}\vert\leq 4\mu\rho_0\}.$$
		So, $u_j$ is of class $F(\mu,1,\infty)$ in $B_{4\rho_0}(x_0)$ with respect to the direction $\nu_{\overline{x}}$ and by Theorem \ref{regflat}, $\partial \Omega_j\cap B_{\rho_0}(x_0)$ is the graph of a smooth function with respect to $\nu_{\overline{x}}$. More precisely, for $\mu$ small enough there exists a family of smooth functions $g^{\overline{x}}_j$ with uniformly bounded $C^k$ norms such that
		$$\partial \Omega_j\cap B_{\rho_0}(\overline{x})=\{x+g^{\overline{x}}_j(x)x:x\in\partial B_1\}\cap B_{\rho_0}(\overline{x}).$$
		By a covering argument this gives a family of smooth functions $g_j$ with uniformly bounded $C^k$ norms such that
		$$\partial \Omega_j=\{x+g_j(x)x:x\in\partial B_1\}.$$
		By Ascoli-Arzel\`a and convergence to $\partial B_1$ in the sense of Kuratowski, we get that $g_j\rightarrow 0$ in $C^{k-1}(\partial B_1)$, hence the smooth convergence of $\partial\Omega_j$.
	\end{proof}	
	
	\section{Reduction to  bounded sets}\label{redtobdd}
	To complete the proof of Theorem \ref{mainthmfrnkl} one needs to show that in the case of the full capacity one can just consider  sets with  uniformly bounded diameter. To this end let us introduce the following  
		\begin{defin}
		Let $\Omega$ be an open set in $\mathds{R}^n$ with $\vert\Omega\vert=\vert B_1\vert$.
		Then we define the deficit of $\Omega$ as the difference between its capacity and the capacity of the unit ball:
		$$D(\Omega)=\Capa(\Omega)-\Capa(B_1).$$
	\end{defin}	
	
	Here is the key lemma for reducing Theorem \ref{mainthmfrnkl} to Theorem \ref{mainthmbdd}.	
	
	\begin{lemma}\label{reducetobdd}
		There exist constants $C=C(N)$, $\delta=\delta(N)>0$ and $d=d(N)$ such that for any $\Omega\subset\mathds{R}^n$  open with $\vert\Omega\vert=\vert B_1\vert$ and $D(\Omega)\leq\delta$,
		we can find a new set $\tilde{\Omega}$ enjoying the following properties
		\begin{enumerate}
			\item $\diam(\tilde{\Omega})\leq d$,
			\item $\vert\tilde{\Omega}\vert=\vert B_1\vert$,
			\item $D(\tilde{\Omega})\leq CD(\Omega)$,
			\item \label{boundonasym} $\mathcal{A}(\tilde{\Omega})\geq\mathcal{A}(\Omega)-CD(\Omega)$.
		\end{enumerate}
	\end{lemma}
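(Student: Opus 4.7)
The plan is to produce $\tilde\Omega$ by truncating $\Omega$ to a ball of $O(1)$ radius around a suitably chosen center $x_0$ and then rescaling to recover the volume $|B_1|$. The entire argument reduces to a concentration estimate: there exist universal constants $d=d(N)$, $C=C(N)$ and a point $x_0\in\mathds R^N$ such that
\[
|\Omega\setminus B_d(x_0)|\le C\, D(\Omega).
\]

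I would prove this concentration by contradiction. If it fails, then a Chebyshev-type slicing of $\rho\mapsto\mathcal H^{N-1}(\Omega\cap\partial B_\rho(x_0))$ produces a clean decomposition $\Omega=\Omega_1\sqcup\Omega_2$ with $\Omega_1\subset B_r(x_0)$ and $\Omega_2\subset\mathds R^N\setminus B_{r'}(x_0)$, where $r'\gg r\sim 1$, and both $|\Omega_i|$ bounded away from $0$ and $|B_1|$. Since the two pieces are far apart, testing the capacity of $\Omega$ against the sum of the single-piece capacitary potentials and exploiting the $|x|^{-(N-2)}$ decay of the Newtonian potential yields an almost-additivity $\Capa(\Omega)\ge \Capa(\Omega_1)+\Capa(\Omega_2)-\varepsilon(r')$ with $\varepsilon(r')\to 0$ as $r'\to\infty$. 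The classical isocapacitary inequality applied to each piece, combined with the strict concavity of $v\mapsto v^{(N-2)/N}$, gives a uniform gap $\Capa(\Omega_1)+\Capa(\Omega_2)\ge \Capa(B_1)+c_0(N)$. Taking $r'$ large enough to swallow the interaction then contradicts $D(\Omega)\le\delta$.

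Given the concentration estimate, set $\Omega':=\Omega\cap B_d(x_0)$, $\lambda:=(|B_1|/|\Omega'|)^{1/N}$, and $\tilde\Omega:=\lambda(\Omega'-x_0)$. By construction $|\tilde\Omega|=|B_1|$ and $\diam(\tilde\Omega)\le 2\lambda d$; since $|\Omega'|\ge |B_1|-CD(\Omega)$ forces $1\le\lambda\le 1+C'D(\Omega)$ when $\delta$ is small, properties (1) and (2) follow. For (3), the scaling $\Capa(\tilde\Omega)=\lambda^{N-2}\Capa(\Omega')$ together with the monotonicity $\Capa(\Omega')\le\Capa(\Omega)$ gives $\Capa(\tilde\Omega)\le(1+C''D(\Omega))\Capa(\Omega)$, so subtracting $\Capa(B_1)$ and using $\Capa(\Omega)\le\Capa(B_1)+\delta$ yields $D(\tilde\Omega)\le CD(\Omega)$. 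For (4), exploit the scale invariance $\mathcal A(\tilde\Omega)=\mathcal A(\Omega')$: let $B^\sharp$ be a ball of volume $|\Omega'|$ realizing $\mathcal A(\Omega')$ and let $B^{\sharp\sharp}$ be the concentric enlargement of $B^\sharp$ to volume $|B_1|$, so that $|B^{\sharp\sharp}\setminus B^\sharp|\le CD(\Omega)$. Then
\[
|\Omega\Delta B^{\sharp\sharp}|\le |\Omega\setminus\Omega'|+|\Omega'\Delta B^\sharp|+|B^{\sharp\sharp}\setminus B^\sharp|\le |\Omega'|\mathcal A(\tilde\Omega)+CD(\Omega),
\]
which upon dividing by $|B_1|$ gives $\mathcal A(\Omega)\le\mathcal A(\tilde\Omega)+CD(\Omega)$.

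The main obstacle is the concentration step. The subtle points are extracting a clean separated splitting of $\Omega$ regardless of its topology, and quantifying the capacitary interaction between far-apart pieces uniformly in the shape of the pieces. Both can in principle be handled by Newtonian potential comparison, but keeping all constants dependent only on $N$ requires care.
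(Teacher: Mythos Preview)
Your reduction to a concentration estimate and the subsequent verification of (1)--(4) are correct and match the paper's argument almost verbatim. The divergence is entirely in how the concentration estimate
\[
|\Omega\setminus B_d(x_0)|\le C\,D(\Omega)
\]
is obtained, and here your sketch has a real gap.

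As written, your contradiction assumes that the failure of concentration produces a splitting $\Omega=\Omega_1\sqcup\Omega_2$ with \emph{both} $|\Omega_i|$ bounded away from $0$ and $|B_1|$. But the negation of the concentration estimate only says $|\Omega\setminus B_d(x_0)|>C\,D(\Omega)$ for every $x_0$, and $C\,D(\Omega)$ can be arbitrarily small. With substantial volumes the concavity of $v\mapsto v^{(N-2)/N}$ indeed gives a fixed gap $c_0(N)$, but that yields only a \emph{qualitative} conclusion (for $D(\Omega)\le\delta$ most of the mass sits in some fixed ball), not the linear bound. The regime you must actually handle is $|\Omega_2|\sim D(\Omega)$, where the concavity gap degenerates to $|\Omega_2|^{(N-2)/N}$, and you need your interaction error $\varepsilon(r')$ to be absorbed at a scale $r'$ that is \emph{fixed}, independent of how small $|\Omega_2|$ is. Your sketch does not address this; nor does it say how $x_0$ is chosen in the first place.

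The paper proceeds differently. It first invokes the non-sharp result of Fusco--Maggi--Pratelli (Theorem~\ref{t:fmp}) to fix $x_0$ as the center of a Fraenkel-minimizing ball and to make $b_1:=|\Omega\setminus B_1|/|B_1|$ small. It then proves a direct quantitative estimate (Lemma~\ref{estcap}): from the exact identity $\Capa(\Omega\cap B_S)=\Capa(\Omega)-\int|\nabla(u_\Omega-u_{\Omega\cap B_S})|^2$, Sobolev embedding and comparison with the explicit potential $z_S=(1-(S/|x|)^{N-2})_+$ give
\[
\Capa(\Omega\cap B_S)\le\Capa(\Omega)-c\,(1-S/S')^{(N-2)/N}\,|\Omega\setminus B_{S'}|^{(N-2)/N}.
\]
This is precisely a sharp form of the almost-superadditivity you invoke. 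Iterating along the dyadic radii $S_k=2-2^{-k}$ yields a recursion $b_{k+1}\le 2^k C\,(D(\Omega)+b_k)^{N/(N-2)}$, which (thanks to the smallness of $b_1$) forces $b_{\bar k}\le D(\Omega)$ at some finite stage.

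Your strategy is salvageable along these lines: use Theorem~\ref{t:fmp} (or your contradiction argument, carefully) to first get $|\Omega\cap B_r(x_0)|\ge|B_1|/2$ for some fixed $r$; then, setting $\Omega_1=\Omega\cap B_r(x_0)$ and $\Omega_2=\Omega\setminus B_{2r}(x_0)$, the maximum-principle bound $u_{\Omega_1}\le (r/|x|)^{N-2}$ on $B_{2r}^c$ makes $(u_\Omega-u_{\Omega_1})/(1-2^{-(N-2)})$ an admissible competitor for $\Capa(\Omega_2)$, giving $\Capa(\Omega)\ge\Capa(\Omega_1)+\theta^2\Capa(\Omega_2)$ with $\theta=1-2^{-(N-2)}$. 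Combined with the isocapacitary inequality on each piece this yields $|\Omega_2|\le C\,D(\Omega)^{N/(N-2)}\le C\,D(\Omega)$ directly. But this is work your sketch does not do, and it ultimately relies on the same circle of ideas as Lemma~\ref{estcap}.
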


We are going to define  $\tilde{\Omega}$  as a  suitable dilation  of $\Omega\cap B_S$ for some large \(S\). Hence, we first show the following estimates on the capacity of $\Omega\cap B_S$.
 	
 	\begin{lemma} \label{estcap}
 		Let $S'>S$. Then there exists a constant $c=c(S')$ such that for any open set 	
 		$\Omega\subset\mathds{R}^N$ with $\vert\Omega\vert=\vert B_1\vert$ the following inequalities hold:
 		\begin{equation*}
 			\Capa(B_1)\left(1-\frac{\vert\Omega\setminus B_S\vert}{\vert B_1\vert}\right)^\frac{N-2}{N}\leq \Capa(\Omega\cap B_S)\leq \Capa(\Omega)-c\left(1-\frac{S}{S'}\right)^\frac{N-2}{N}\vert\Omega\setminus B_{S'}\vert^\frac{N-2}{N}.
 		\end{equation*}
 	\end{lemma}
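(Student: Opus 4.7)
The proof naturally splits into two independent inequalities, which I treat separately.

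The left inequality is immediate from the classical isocapacitary inequality applied to $\Omega \cap B_S$: since $|\Omega \cap B_S| = |B_1| - |\Omega \setminus B_S|$, letting $r$ satisfy $|B_r| = |\Omega \cap B_S|$ gives $\Capa(\Omega \cap B_S) \geq \Capa(B_r) = r^{N-2}\Capa(B_1)$, which rearranges exactly into the stated bound.

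For the right inequality, let $u$ and $\tilde u$ be the capacitary potentials of $\Omega$ and $\Omega \cap B_S$, respectively. Since both functions equal $1$ on $\Omega \cap B_S$, the difference $w := u - \tilde u$ vanishes on this contact set; hence $\tilde u + tw$ remains admissible for the minimization defining $\Capa(\Omega \cap B_S)$ for every $t \in \mathbb R$. Minimality of $\tilde u$ at $t=0$ then forces $\int \nabla \tilde u \cdot \nabla w = 0$, yielding the key identity
\[
\Capa(\Omega) - \Capa(\Omega \cap B_S) = \int |\nabla w|^2\, dx.
\]
To bound the right-hand side from below, I exploit the pointwise comparison $\tilde u \leq u_{B_S}$, where $u_{B_S}(x) = (S/|x|)^{N-2}$ for $|x| \geq S$ is the capacitary potential of $B_S \supset \Omega \cap B_S$ (this is the classical superharmonic-envelope characterization of capacitary potentials). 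Consequently, on $\Omega \setminus B_{S'}$ one has $w = 1 - \tilde u \geq \kappa := 1 - (S/S')^{N-2} > 0$, so that $w/\kappa$ is a valid test function in the minimization defining $\Capa(\Omega \setminus B_{S'})$. Combining
\[
\int |\nabla w|^2 \geq \kappa^2\, \Capa(\Omega \setminus B_{S'}) \geq \kappa^2\, \Capa(B_1)\Bigl(\frac{|\Omega \setminus B_{S'}|}{|B_1|}\Bigr)^{(N-2)/N},
\]
where the second step is the classical isocapacitary inequality applied to $\Omega \setminus B_{S'}$, and using the elementary bound $\kappa \geq 1 - S/S'$ (valid for $N \geq 3$ and $S \leq S'$, since $(S/S')^{N-2} \leq S/S'$) yields the claimed inequality after absorbing dimensional constants into $c = c(S')$.

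The only step demanding real care is the pointwise comparison $\tilde u \leq u_{B_S}$ and the verification that $w/\kappa$ lies in $D^{1,2}(\mathbb R^N)$; both are standard in potential theory. I expect no further serious obstacle.
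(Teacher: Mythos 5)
Your argument is correct in structure and reaches the same place as the paper's by a slightly different route. Both proofs start from the orthogonality identity $\Capa(\Omega)-\Capa(\Omega\cap B_S)=\int|\nabla w|^2$ with $w=u-\tilde u$ (the paper gets it via an integration by parts, you via the Euler--Lagrange stationarity of $\tilde u$, which is cleaner). For the lower bound on $\int|\nabla w|^2$ the paper uses the Sobolev inequality to pass to $\left(\int|v_S|^{2^*}\right)^{2/2^*}$ and then the barrier $z_S=(1-S^{N-2}/|x|^{N-2})_+$; you instead observe that $w/\kappa$ with $\kappa=1-(S/S')^{N-2}$ is admissible for $\Capa(\Omega\setminus B_{S'})$ and invoke the classical isocapacitary inequality. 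These are parallel arguments of the same depth (the isocapacitary inequality you invoke is itself Sobolev-flavored), and they give exactly the same quantity: a lower bound of the form $c(N)\,\kappa^2\,|\Omega\setminus B_{S'}|^{(N-2)/N}$. Your comparison $\tilde u\le u_{B_S}$ is just monotonicity of the capacitary potential under set inclusion, which is standard and fine.

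The one place you brush something under the rug is the very last sentence. From $\kappa\ge 1-S/S'$ you get $\kappa^2\ge(1-S/S')^2$, but the lemma asserts the factor $(1-S/S')^{(N-2)/N}$, and since $0<1-S/S'<1$ and $(N-2)/N<2$, one has $(1-S/S')^2\le(1-S/S')^{(N-2)/N}$ --- so what you have proved is \emph{weaker} than the stated display, and ``absorbing dimensional constants into $c(S')$'' does not fix it, because the constant must be uniform in $S$ for the lemma's use in Lemma~\ref{reducetobdd}. You should be aware, however, that this mismatch is already present in the paper's own proof: taking the $(N-2)/N$-th power of $\left(1-(S/S')^{N-2}\right)^{2N/(N-2)}$ gives the square, not the $(N-2)/N$-th power, so the exponent in the stated lemma appears to be a typo for $2$. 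The weaker bound with $(1-S/S')^2$ still suffices for the iteration in Lemma~\ref{reducetobdd} (the geometric factor $2^{2kN/(N-2)}$ replacing $2^k$ does not spoil the super-exponential decay of $b_k$). So your proof is sound; just state the exponent $2$ explicitly rather than claiming the printed $(N-2)/N$ follows.
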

 	\begin{proof}
 		The first inequality is a direct consequence of the classical isocapacitary inequality. To prove the second one we are going to use the estimates for the capacitary potential of $B_S$ for which the exact formula can be written. Denote by $u_{\Omega}$ and $u_S$ the capacitary potentials of $\Omega$ and $\Omega\cap B_S$ respectively. We first compute 		\begin{equation*}
			\begin{aligned}
			&\Capa(\Omega\cap B_S)=\Capa(\Omega)+\int_{\mathds{R}^n}{\vert\nabla u_S\vert^2-\vert\nabla u_\Omega\vert^2}\\
			&=\Capa(\Omega)-\int_{(\Omega\cap B_S)^c}\vert\nabla(u_\Omega-u_S)\vert^2+2\int_{(\Omega\cap B_S)^c}\nabla u_S\cdot\nabla (u_S-u_\Omega)\\
			&=\Capa(\Omega)-\int_{(\Omega\cap B_S)^c}\vert\nabla(u_\Omega-u_S)\vert^2-2\int_{(\Omega\cap B_S)^c}(\Delta u_S) (u_S-u_\Omega)\\
			&\qquad+2\int_{\partial(\Omega\cap B_S)}(u_S-u_\Omega)\nabla u_S\cdot \nu d\mathcal{H}^{N-1}\\
			&=\Capa(\Omega)-\int_{(\Omega\cap B_S)^c}\vert\nabla(u_\Omega-u_S)\vert^2\\			
			\end{aligned}
		\end{equation*}
since \(u_S=u_{\Omega}=1\) on \(\partial(\Omega\cap B_S)\).	We would like to show that $\int_{(\Omega\cap B_S)^c}\vert\nabla(u_\Omega-u_S)\vert^2$ cannot be too small. To this end let us set  $v_\Omega=1-u_\Omega$ and similarly for \(v_S\). By  Sobolev's embedding  we get
	\begin{equation*}
	\begin{aligned}
		&\int_{(\Omega\cap B_S)^c}\vert\nabla(u_\Omega-u_S)\vert^2=\int_{(\Omega\cap B_S)^c}\vert\nabla(v_\Omega-v_S)\vert^2\\
		&\geq c(N)\left(\int_{(\Omega\cap B_S)^c}\vert v_\Omega-v_S\vert^{2^*}\right)^\frac{2}{2^*}\geq c\left(\int_{\Omega\setminus B_S}\vert v_S\vert^{2^*}\right)^\frac{2}{2^*},\\ 
	\end{aligned}	
	\end{equation*}
	where \(2^*\) is the Sobolev exponent and in the last inequality we used  that  $v_\Omega\equiv 0$ on $\Omega$. Let us  also set 
	\[
	z_S=\Biggl(1-\frac{S^{N-2}}{|x|^{N-2}}\Biggr)_+.
	\]
	By the maximum principle, $v_S\geq z_S$, hence
	\begin{equation*}
	\begin{split}
		\int_{\Omega\setminus B_S}\vert v_S\vert^{2^*}&\geq \int_{\Omega\backslash B_S}\vert z_{S}\vert^{2^*}\\
		&\ge \int_{\Omega\backslash B_{S'}}\vert z_{S}\vert^{2^*} \geq \left(1-\left(\frac{S}{S'}\right)^{N-2}\right)^\frac{2N}{N-2}\vert\Omega\setminus B_{S'}\vert.
		\end{split}
	\end{equation*}	
Hence
	\begin{equation*}
		\begin{aligned}
		\Capa(\Omega\cap B_S)&\leq \Capa(\Omega)-c(N)\left(1-\left(\frac{S}{S'}\right)^{N-2}\right)^\frac{N-2}{N}\vert\Omega\setminus B_{S'}\vert^\frac{N-2}{N}\\
		&\leq \Capa(\Omega)-c\left(1-\frac{S}{S'}\right)^\frac{N-2}{N}\vert\Omega\setminus B_{S'}\vert^\frac{N-2}{N},
		\end{aligned}	
	\end{equation*}
	concluding the proof.
 	\end{proof}
	
 We can now prove Lemma \ref{reducetobdd}. 
	\begin{proof} [Proof of Lemma \ref{reducetobdd}]. Let us  assume without loss of generality that the ball achieving the asymmetry of $\Omega$ is $B_1$.  As was already mentioned, we are going to show that there exists an $\tilde{\Omega}$ of the form $\lambda (\Omega\cap B_S)$  for  suitable \(S\) and \(\lambda\) satisfying all the desired properties. Let us set  
	\[
	b_k:=\frac{\vert\Omega\backslash B_{2-2^k}\vert}{\vert B_1\vert}\le 1.
	\]
	Note that by Theorem \ref{t:fmp} we can assume that \(b_1\le 2\mathcal A(\Omega)\) is as small as we wish (independently on \(\Omega\) up to choose \(\delta\) sufficiently small. Lemma \ref{estcap} gives 
	\begin{equation*}
		\begin{aligned}
		\Capa(\Omega)&-c\left(\frac{2^{-(k+1)}}{2-2^{-(k+1)}}\right)^\frac{N-2}{N}b_{k+1}^\frac{N-2}{N}\geq \Capa(B_1)(1-b_k)^\frac{N-2}{N}\geq \Capa(B_1)-\Capa(B_1)b_k,
		\end{aligned}
	\end{equation*}
	which implies
	\begin{equation} \label{estonbk}
		c b_{k+1}\leq 2^k (D(\Omega)+C b_k)^\frac{N}{N-2}.
	\end{equation}
We now claim that there exists \(\bar k\) such that 
\[
b_{\bar k}\le D(\Omega).
\]
Indeed, otherwise   by \eqref{estonbk} we wpuld get
	\begin{equation*}
		b_{k+1}\leq C 2^k (D(\Omega)+C b_k)^\frac{N}{N-2}\leq 2^k C' b_k^\frac{N}{N-2}
		\leq M^k b_k^\frac{N}{N-2}
	\end{equation*}
	for all \(k\in \mathbb N\), where   $M=M(N)$. Iterating the last inequality, we obtain
	\begin{equation*}
		b_{k+1}\leq (M b_1)^{(\frac{N}{N-2})^k}\xrightarrow[k\to\infty]{} 0
	\end{equation*}
	if $b_1$ is small enough, which by Theorem \ref{t:fmp}  we can assume up to choose \(\delta=\delta(N)\ll1\).

	We  define $\tilde{\Omega}$ as a properly rescaled intersection of $\Omega$ with a ball. Let $\bar k $ be such that $b_{\bar k} \le D(\Omega)$
	\begin{equation*}
		\tilde{\Omega}:=\left(\frac{\vert B_1\vert}{\vert\Omega\cap  B_R\vert}\right)^\frac{1}{N}(\Omega\cap  B_R)=(1-b_{\bar k})^{-\frac{1}{N}}(\Omega\cap  B_S),
	\end{equation*}
	where $S:=2-2^{-\bar k}\le 2$. Note that \(|\tilde \Omega|=|B_1|\). We now check all the remaining properties:
	\begin{itemize}
	\item[-] \emph{Bound on the diameter}:
	\begin{equation*}
		\diam(\tilde{\Omega})\leq 2\cdot 2 (1-D(\Omega))^{-\frac{1}{N}}\leq 4(1-\delta)^{-\frac{1}{N}}\le 4.
	\end{equation*}
	up to choose \(\delta=\delta(N)\ll1\).
	\item[-]  \emph {Bound on the deficit}:
	\begin{equation*}
	\begin{aligned}
		D(\tilde{\Omega})&=\Capa(\tilde{\Omega})-\Capa(B_1)=\Capa(\Omega\cap B_S)(1-b_{\bar K})^{-\frac{N-2}{N}}-\Capa(B_1)\\
		&\leq \Capa(\Omega)(1-b_{\bar k})^{-\frac{N-2}{N}}-\Capa(B_1)\\
		&\le  \Capa(\Omega)-\Capa(B_1)+\frac{2(N-2)\Capa(\Omega)}{N}b_{\bar k}\le C(N)D(\Omega).
	\end{aligned}	
	\end{equation*}
	since \(b_{\bar k} \le D(\Omega)\ll 1\) and, in particular, \(\Capa (\Omega)\le 2 \Capa(B_1)\).
	\item[-]\emph{Bound on  the asymmetry}: Let $r:=(1-b_{\bar k})^{-1} \in (1,2)$, that is $r$ is such that $\tilde{\Omega}=r^N(\Omega \cap B_S)$ with \(S=2-2^{-\bar k}\le 2\). Let $x_0$ be such that $B_1(x_0)$ is a minimizing ball for $\mathcal{A}(\tilde{\Omega})$.  Then,  recalling that \(b_{\bar k}=|B_1|^{-1}|\Omega\setminus B_S|\le C(N) D(\Omega)\),
	\begin{equation*}
	\begin{aligned}
		|B_1|\mathcal{A}(\Omega)&\leq\vert\Omega\Delta B_1\left(\frac{x_0}{r}\right)\vert\leq\vert\Omega\setminus B_S\vert+\left\vert(\Omega\cap B_S)\Delta B_1\left(\frac{x_0}{r}\right)\right\vert\\
		&\leq C D(\Omega) +\left\vert(\Omega\cap B_S)\Delta B_\frac{1}{r}\left(\frac{x_0}{r}\right)\right\vert\\
		&\quad+\left\vert B_\frac{1}{r}\left(\frac{x_0}{r}\right)\Delta B_1\left(\frac{x_0}{r}\right)\right\vert\\
		&\le CD(\Omega)+\frac{|B_1|}{r^N}\mathcal{A}(\tilde{\Omega})+\vert B_1\vert\left(1-\frac{1}{r^N}\right)\\
		&\leq CD(\Omega)+|B_1|\mathcal{A}(\tilde{\Omega})+C(N)  b_{\bar k}\\
		&\leq CD(\Omega)+|B_1|\mathcal{A}(\tilde{\Omega}).
	\end{aligned}
	\end{equation*}
	
	\end{itemize}
	\end{proof} 
	
	\section{Proof of Theorem \ref{mainthmfrnkl}}\label{s:proof}
 In order to reduce it to Theorem \ref{mainthmbdd} we need to start with a set which is already close to a ball. In the case of the absolute capacity, thanks to Theorem  \ref{t:fmp},  this can be achieved by assuming the deficit sufficiently small (the quantitative inequality being trivial in the other regime). The next lemma contains the same ``qualitative'' result in the case of the relative capacity.

\begin{lemma}\label{l:qual}
For all \(\varepsilon>0\) there exists \(\delta=\delta(\varepsilon, R)>0\) such that if \(\Omega\subset B_R\) is an open set with \(|\Omega|=1\) and 
\[
\Capa_R(\Omega)\le \Capa_R(B_1)+\delta
\]
then
\[
\alpha_R(\Omega)\le \varepsilon.
\]
\end{lemma}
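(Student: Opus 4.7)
The plan is to argue by contradiction using a compactness argument for capacitary potentials and to conclude via the rigidity of the relative isocapacitary inequality \eqref{iso1}. Assume the statement fails: then there exist $\varepsilon_0>0$ and a sequence of open sets $\Omega_j\subset B_R$ with $|\Omega_j|=|B_1|$, $\Capa_R(\Omega_j)\to \Capa_R(B_1)$, but $\alpha_R(\Omega_j)\ge\varepsilon_0$ for every $j$. The goal is to show $1_{\Omega_j}\to 1_{B_1}$ in $L^1(B_R)$, which by Lemma~\ref{propasym}\ref{asymlip} forces $\alpha_R(\Omega_j)\to\alpha_R(B_1)=0$, contradicting the assumption.

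Let $u_j\in W^{1,2}_0(B_R)$ be the capacitary potential of $\Omega_j$. Since $\int_{B_R}|\nabla u_j|^2=\Capa_R(\Omega_j)$ is uniformly bounded and $0\le u_j\le 1$, up to a subsequence $u_j\rightharpoonup u$ weakly in $W^{1,2}_0(B_R)$, strongly in $L^2(B_R)$, and a.e., with $0\le u\le 1$. Set $\Omega_\infty:=\{u=1\}$. At every point $x$ with $u(x)<1$ and $u_j(x)\to u(x)$, we have $u_j(x)<1$ eventually, hence $x\notin\Omega_j$ for $j$ large; this gives $\limsup_j 1_{\Omega_j}(x)\le 1_{\Omega_\infty}(x)$ a.e., so by reverse Fatou $|B_1|=\limsup_j|\Omega_j|\le|\Omega_\infty|$. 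Since $u$ is an admissible competitor for $\Capa_R(\Omega_\infty)$, combining weak lower semicontinuity of the Dirichlet energy with the relative isocapacitary inequality and the monotonicity of $r\mapsto\Capa_R(B_r)$ yields
\begin{equation*}
\Capa_R(B_1)\le\Capa_R(B_{r_\infty})\le\Capa_R(\Omega_\infty)\le\int_{B_R}|\nabla u|^2\le\liminf_j\int_{B_R}|\nabla u_j|^2=\Capa_R(B_1),
\end{equation*}
where $r_\infty\ge 1$ is such that $|B_{r_\infty}|=|\Omega_\infty|$. All inequalities are equalities; strict monotonicity of $r\mapsto\Capa_R(B_r)$ gives $|\Omega_\infty|=|B_1|$, and the rigidity statement for the relative isocapacitary inequality identifies $\Omega_\infty$ with $B_1$ up to a set of zero capacity. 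Having $\limsup_j 1_{\Omega_j}\le 1_{B_1}$ a.e.\ together with $|\Omega_j|=|B_1|=|\Omega_\infty|$, dominated convergence in $B_R$ delivers $|\Omega_j\setminus B_1|\to 0$, hence $|\Omega_j\Delta B_1|\to 0$, closing the contradiction.

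The main obstacle is the passage from weak convergence of the potentials to control of the level sets $\{u_j=1\}$: a priori the weak limit could lose mass on $\Omega_\infty$, but the strong $L^2$ (and thus pointwise a.e.) convergence, combined with $u_j\equiv 1$ quasi-everywhere on $\Omega_j$, makes the one-sided semicontinuity $\limsup_j 1_{\Omega_j}\le 1_{\Omega_\infty}$ robust enough to force $|\Omega_\infty|\ge|B_1|$. A minor technicality is that $\Omega_\infty$ is only quasi-open; but the relative isocapacitary inequality together with its rigidity holds in this class by standard capacity-theoretic approximation, so no additional work is required.
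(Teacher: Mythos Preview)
Your proof is correct and follows essentially the same route as the paper's: contradiction, compactness of the capacitary potentials in $W^{1,2}_0(B_R)$, identification of the limit level set $\{u=1\}$ with $B_1$ via the relative isocapacitary inequality and its rigidity, and $L^1$ convergence of $1_{\Omega_j}$ combined with Lemma~\ref{propasym}\ref{asymlip}. The only cosmetic difference is that you invoke ``dominated convergence'' for $|\Omega_j\setminus B_1|\to 0$, whereas what is actually used (and what the paper uses implicitly) is reverse Fatou on $1_{\Omega_j}$ restricted to $B_R\setminus B_1$; this is harmless.
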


\begin{proof}		We argue by contradiction. Suppose there exists an $\varepsilon_0>0$ and a sequence of 
		open sets $\Omega_j\subset B_R$ with $\vert\Omega_j\vert=\vert B_1\vert$ such that 
		$\alpha_R(\Omega_j)\geq\varepsilon_0$ but 
		\[
		\Capa_R(B_1)\le \Capa_R(\Omega_j)\le \Capa_R(B_1)+1/j.
		\]
		We denote by $u_j\in W^{1,2}_0(B_R)$ the capacitary potential of $\Omega_j$. The above inequality grants that 
		 \[
		 \int_{B_R}{\vert\nabla u_j\vert^2}dx\rightarrow \Capa_R(B_1).
		 \]
	Thus, up to a not-relabelled subsequence,  there exists a function
		$u$ in $W^{1,2}_0(B_R)$ such that $u_j\rightharpoonup u$ in $W_0^{1,2}(B_R)$, $u_j\rightarrow u$ 
		in $L^2(B_R)$ and almost everywhere in $B_R$. We define $\Omega$ as  $\{u=1\}$. From the lower semi-continuity of Dirichlet integral
		we have that 
		$$\Capa_R(\Omega)\leq\int_{B_R}{\vert\nabla u\vert^2}dx\leq\liminf\int_{B_R}{\vert\nabla u_j\vert^2}dx=\Capa_R(B_1).$$
		 On the other hand, we have $1_\Omega\geq\limsup 1_{\Omega_j}$, meaning that 
		$|\Omega_j\setminus \Omega|\to 0$ and \(|\Omega|\ge  |\Omega_j|=|B_1|$. The  isocapacitary inequality then  implies that  $\Omega=B_1$. In particular, \(|\Omega_j|=|\Omega|\) for all \(j\) and 
		\[
		|\Omega\setminus\Omega_j|=|\Omega_j\setminus \Omega|\to 0,
		\]
		and thus \(1_{\Omega_j}\to1_{\Omega}=1_{B_1}\) in \(L^1(B_R)\). Hence by Lemma \ref{propasym}, \ref{asymlip}, \(\alpha_R(\Omega_j)\to 0\), a contradiction.
			\end{proof}

We have now all the ingredients  to prove  Theorem \ref{mainthmfrnkl}.	
	
 	\begin{proof}[Proof of Theorem \ref{mainthmfrnkl}] We will consider separately the cases of the absolute and relative capacity. 

\medskip
\noindent
\emph{Absolute capacity}. First note that if \(D(\Omega)\ge \delta_0\) then, since \(\mathcal A(\Omega)\ge 2\),
		\[
		D(\Omega)\ge 4\frac{\delta_0}{4}\ge \frac{\delta_0}{4}\mathcal A(\Omega)^2.
		\]
		Hence we can assume that \(D(\Omega)\) is as small as we wish as long as the smallness depends only on \(N\). We now \(\delta_0\) smaller than the constant \(\delta\) in Lemma \ref{reducetobdd} and, assuming that \(D(\Omega)\le \delta_0\), we use Lemma \ref{reducetobdd} to find a set  $\tilde{\Omega}$ with \(\diam(\tilde \Omega)\le d=d(N)\) and satisfying all the properties there. In particular, up to a translation we can assume that \(\tilde \Omega\subset B_d\). Up to choosing  \(\delta_0\) smaller we can apply  Theorem \ref{t:fmp} and Lemma \ref{propasym} \ref{asymlip} to ensure that \(\alpha(\tilde \Omega)\le \epsilon_0\) where \(\epsilon_0=\epsilon_0(N,d)=\epsilon_0(N)$ is the constant appearing in the statement of  Theorem \ref{mainthmbdd}. This, together with Lemma \ref{propasym}, \ref{compasym},  grants that 
		\[
		D(\tilde \Omega)\ge c(N) \alpha(\tilde \Omega)\ge c(N)\mathcal A(\tilde \Omega)^2.
		\]
		Hence, by Lemma \ref{reducetobdd} and assuming that \(\mathcal A(\Omega)\ge C D(\Omega)\) (since otherwise there is nothing to prove),
		\[
		D(\Omega)\ge cD(\tilde \Omega)\ge c A(\tilde \Omega)^2\ge c \mathcal A(\tilde \Omega)^2\ge c \mathcal A(\Omega)^2-CD(\Omega)^2
		\]
		from which the conclusion easily follows since \(D(\Omega)\le \delta_0\ll 1\).
		
 \medskip
\noindent
\emph{Relative capacity}.	Since \(\alpha_R(\Omega)\ \le C(R,N)\) by arguing as in the previous case, we can assume that  \(\Capa_R(\Omega)-\Capa_R(B_1)\le \delta_1(N,R)\ll1\). By Lemma \ref{l:qual} we can assume that \(\alpha_R(\Omega)\le \epsilon_0\) where \(\epsilon_0=\epsilon_0(N,R)\) is the constant in Theorem \ref{mainthmbdd}. Hence
\[
\Capa_R(\Omega)-\Capa_R(B_1)\ge c(N,R)\alpha_R(\Omega)\ge c(N,R) |\Omega\Delta B_1|^2.
\]
	
 	\end{proof}


\end{document}